\newtheorem{theorem}{Theorem}[section]
\newtheorem{lemma}[theorem]{Lemma}
\newtheorem{remark}[theorem]{Remark}
\newtheorem{corollary}[theorem]{Corollary}
\newcommand{\R}{\mathbb{R}}
\newcommand{\D}{\mathbb{D}}
\newcommand{\N}{\mathbb{N}}
\def\bbm[#1]{\mbox{\boldmath $#1$}}
\newcommand{\beq }{\begin{equation}}
\newcommand{\eeq }{\end{equation}}
\def\sideremark#1{\ifvmode\leavevmode\fi\vadjust{\vbox to0pt{\vss
 \hbox to 0pt{\hskip\hsize\hskip1em
 \vbox{\hsize3cm\tiny\raggedright\pretolerance10000
  \noindent #1\hfill}\hss}\vbox to8pt{\vfil}\vss}}}%
\pgfplotsset{compat=1.18}
\begin{document}

\title[Global second order optimal regularity]{Global second order optimal regularity for the vectorial $p$-Laplacian}

\author{Berardino Sciunzi, Giuseppe Spadaro, Domenico Vuono}

\email[Berardino Sciunzi]{sciunzi@mat.unical.it}
\email[Giuseppe Spadaro]{giuseppe.spadaro@unical.it}
\email[Domenico Vuono]{domenico.vuono@unical.it}
\address[B. Sciunzi, G. Spadaro, D. Vuono]{Dipartimento di Matematica e Informatica, Università della Calabria,
Ponte Pietro Bucci 31B, 87036 Arcavacata di Rende, Cosenza, Italy}

\keywords{p-Laplacian system, regularity results}

\subjclass[2020]{35J25,35J92, 35B65}


\begin{abstract}
We obtain optimal regularity  results for solutions to vectorial $p$-Laplace equations  $$ -{\boldsymbol \Delta}_p{\boldsymbol u}=-\operatorname{\bf div}(|D{\boldsymbol u}|^{p-2}D{\boldsymbol u}) = {\boldsymbol f}(x)\,\, \mbox{ in $\Omega$}\,.$$
More precisely we address the issue of global second order estimates for the stress field.
\end{abstract}

\maketitle

\section{Introduction}

\noindent In this work we deal with second order regularity estimates for solutions to vectorial $p$-Laplace equations. More precisely we shall consider problems involving the vectorial operator $-{\boldsymbol \Delta}_p{\boldsymbol u}$ defined, for  smooth functions, by 
\beq\label{pLaplace}
-{\boldsymbol \Delta}_p{\boldsymbol u}=-\operatorname{\bf div}(|D{\boldsymbol u}|^{p-2}D{\boldsymbol u})
\eeq
where $p>1$ and $D{\boldsymbol u}$ is the Jacobian of  the  vector field ${\boldsymbol u}\,:\, \Omega \rightarrow \mathbb{R}^N$, with $N\geq 2$ 
and 
 $\Omega$ is a bounded Lipschitz domain  in $\mathbb{R}^n$, with $n\geq 2$. We shall use the notation  ${\boldsymbol u}=(u^1,\ldots,u^N)$, for a weak 
 solution to the $p$-Laplace system 
\beq\label{system1}
-\operatorname{\bf div}(|D{\boldsymbol u}|^{p-2}D{\boldsymbol u})
=   {\boldsymbol f}(x)\quad \text{in } \Omega,
\eeq
where ${\boldsymbol f}:\Omega \to\mathbb{R}^N$ is given.

\noindent Our main focus is on proving optimal regularity of the stress field $|D\boldsymbol u|^{p-2} D \boldsymbol u$ up to the boundary. To this aim, we consider either Dirichlet
\begin{equation}\label{Dir_cond}\tag{D}
    \boldsymbol u = 0 \quad \text{on } \partial \Omega,
\end{equation}
or Neumann homogeneous boundary conditions,
\begin{equation}\label{Neu_cond}\tag{N}
    {\frac{\partial \boldsymbol u}{\partial \boldsymbol{\nu}}}= 0 \quad \text{on } \partial \Omega,
\end{equation}
where $\boldsymbol\nu$ denotes the outward unit vector on $\partial \Omega$.

\noindent This problem has been studied by Cianchi and Maz’ya in \cite{Cma} where the authors proved that if $\boldsymbol{f} \in L^2(\Omega)$, then $|D \boldsymbol u|^{p-2} D \boldsymbol u \in W^{1,2}(\Omega)$ under minimal assumptions on $\partial \Omega$, when either Dirichlet or Neumann boundary conditions are prescribed. This result can be seen as an optimal achievement since it shows that the stress field is in $W^{1,2}(\Omega)$ if and only if the source term is in $L^2(\Omega)$. Our point of view is to prove stronger second-order regularity of the solutions, obviously under stronger assumptions on the source term and on the regularity of the domain. In particular, a suitable assumption on the domain consists in assuming some integrability properties of the weak curvatures of $\partial \Omega$. One can request that the functions that locally describe the boundary of $\Omega$ are endowed with second-order weak derivatives which belong to a specific Lorentz-Zygmund space depending on the dimension $n$ (see Section 2 for details).
Luckily we succeed in obtaining sharp estimates, in the sense of \cite[Remark 1.4]{SMM_vec}, namely:

\vspace{0.3 cm}
\begin{theorem}\label{teo1INTRO}
	Let $\Omega$ be a bounded Lipschitz domain in $\R^n$, with $\partial \Omega\in W^2X$, where

\begin{equation}\label{X}
    X=
    \begin{cases}
        L^{n-1,1} & \mbox{if $n \geq 3$,}\\
        L\log{L} & \mbox{if $n =2$.}
    \end{cases}
\end{equation}

    Let $\boldsymbol{u}$ be a weak solution to either the Neumann problem \eqref{system1} + \eqref{Neu_cond} or the Dirichlet problem \eqref{system1} + \eqref{Dir_cond}, with $$\boldsymbol f(x)\in W^{1,1}(\Omega)\cap L^q(\Omega), \quad \text{with} \quad q>n.$$\\
    If $1 <
    p< \frac{3}{2}$, let us assume that
    \begin{equation}\label{h_p}
    	 \alpha < h(p) \vcentcolon = \frac{(2-p)(p+2\sqrt{2(p-1)}+1)}{(p-3)^2},
    \end{equation}
    while $\alpha < 1$ if $p\geq \frac{3}{2}$.

Then,
\begin{equation}\label{ciao2INTRO}
    |D \boldsymbol u|^{\gamma-1} D \boldsymbol u \in W^{1,2}(\Omega),
\end{equation}
for any  $\gamma \geq \frac{p-\alpha}{2}$.

\end{theorem}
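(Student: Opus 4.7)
The plan is to proceed by a double approximation scheme, regularizing both the degenerate/singular operator and the Lipschitz domain. First I would consider the non-degenerate approximating system
\[
-\operatorname{\bf div}\bigl((\varepsilon + |D\boldsymbol u_\varepsilon|^2)^{(p-2)/2} D\boldsymbol u_\varepsilon\bigr) = \boldsymbol f_\varepsilon \quad \text{in } \Omega_\varepsilon,
\]
coupled with the corresponding homogeneous boundary condition on $\partial\Omega_\varepsilon$, where $\Omega_\varepsilon$ is a sequence of smooth domains invading $\Omega$ and preserving the $W^{2}X$ regularity of $\partial\Omega$ in the limit, and $\boldsymbol f_\varepsilon$ is a smooth approximation of $\boldsymbol f$ in $W^{1,1}(\Omega)\cap L^q(\Omega)$. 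By classical elliptic theory, $\boldsymbol u_\varepsilon \in C^{2,\beta}(\overline{\Omega_\varepsilon})$, which legitimizes the forthcoming pointwise manipulations.

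The core of the argument is a uniform a priori estimate on $|D\boldsymbol u_\varepsilon|^{\gamma-1} D \boldsymbol u_\varepsilon$ in $W^{1,2}$. The strategy is to differentiate the regularized system in $x_k$, test against $(\varepsilon + |D\boldsymbol u_\varepsilon|^2)^{\gamma-1}\partial_k \boldsymbol u_\varepsilon$, sum over $k$, and integrate by parts. This produces on the left-hand side, up to algebra, the target quantity together with a bulk quadratic form $Q(D^2\boldsymbol u_\varepsilon)$ weighted by $(\varepsilon+|D\boldsymbol u_\varepsilon|^2)^{(p-3)/2 + \gamma - 1}$ whose coerciveness depends on $p,\gamma,\alpha$; on the right-hand side a term involving $D\boldsymbol f_\varepsilon$ that the hypothesis $\boldsymbol f\in W^{1,1}\cap L^q$ (and the $L^\infty$-bound on $\boldsymbol u_\varepsilon$) is tailored to control. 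Integration by parts also yields boundary integrals: under \eqref{Dir_cond} they are rewritten via the second fundamental form of $\partial\Omega_\varepsilon$ exploiting that $\boldsymbol u_\varepsilon$ vanishes tangentially, while under \eqref{Neu_cond} the analogous representation follows from a Reilly-type identity based on the conormal condition.

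The main obstacle will be the algebraic positivity of $Q$. Decomposing $D^2\boldsymbol u_\varepsilon$ into its component along $D\boldsymbol u_\varepsilon$ and the orthogonal one reduces the question to a discriminant condition for a symmetric $2\times 2$ form whose entries are polynomials in $p,\gamma,\alpha$. A direct computation should reveal that the form is coercive for every $\alpha<1$ when $p\geq 3/2$, whereas for $1<p<3/2$ the competition between the singular weight $|D\boldsymbol u|^{p-2}$ and the cross terms forces precisely the threshold $\alpha<h(p)$ in \eqref{h_p}. Meanwhile the boundary curvature contributions lie exactly in a Lorentz-Zygmund scale that the assumption $\partial\Omega\in W^{2}X$ is designed to handle through Sobolev-type trace embeddings on $\partial\Omega$, and the lower bound $\gamma\geq (p-\alpha)/2$ is the sharp threshold that allows such contributions to be absorbed into the coercive part of the inequality.

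Once the uniform estimate is established, I would pass to the limit as $\varepsilon\to 0$: monotonicity of the $p$-Laplace operator together with the Cianchi-Maz'ya regularity yield $\boldsymbol u_\varepsilon\to \boldsymbol u$ strongly in $W^{1,p}(\Omega)$, whence $|D\boldsymbol u_\varepsilon|^{\gamma-1} D\boldsymbol u_\varepsilon$ converges weakly in $W^{1,2}$ to $|D\boldsymbol u|^{\gamma-1} D\boldsymbol u$, and lower semicontinuity of the norm delivers \eqref{ciao2INTRO}.
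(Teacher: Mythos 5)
Your outline reproduces the overall skeleton of the paper's argument: a double approximation in the operator (ellipticity regularization via $\varepsilon$) and in the domain (smooth sets $\Omega_m$ exhausting $\Omega$ with controlled Lipschitz characteristic and controlled $\mathcal K_\Omega$), a differentiated and weight-tested version of the regularized system, boundary terms rewritten through the second fundamental form of $\partial\Omega$ and absorbed using the $W^2X$ assumption, and a passage to the limit by compactness and lower semicontinuity. Several of the ingredients you gesture at are exactly those the paper uses: the test field is (up to the cutoff) $\boldsymbol u_i\varphi^2(\varepsilon+|D\boldsymbol u|^2)^{-\alpha/2}$ with $\gamma-1=-\alpha/2$, the boundary curvature identity is Grisvard's formula (your ``Reilly-type identity''), the $L^\infty$ bound on $D\boldsymbol u_\varepsilon$ (not on $\boldsymbol u_\varepsilon$ itself) from the Cianchi--Maz'ya global gradient estimate is what upgrades the endpoint exponent $\gamma=(p-\alpha)/2$ to any $\gamma\geq(p-\alpha)/2$, and a capacitary trace lemma absorbs the boundary integral into the left-hand side when $\mathcal K_\Omega$ is small. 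So far so good.

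The genuine gap is in the \emph{coercivity step}. You write that ``decomposing $D^2\boldsymbol u_\varepsilon$ into its component along $D\boldsymbol u_\varepsilon$ and the orthogonal one reduces the question to a discriminant condition for a symmetric $2\times 2$ form.'' That reduction is valid in the \emph{scalar} case, where $\nabla u$ is a single vector and the relevant quadratic expression in $\nabla^2 u$ genuinely splits into a normal and a tangential block, yielding a $2\times 2$ discriminant and the scalar threshold $\alpha < p-1$. In the \emph{vectorial} case, however, $D^2\boldsymbol u$ is an $N\times n\times n$ tensor and $D\boldsymbol u$ has $N$ independent rows $\nabla u^\beta$; there is no single ``direction of $D\boldsymbol u$'' to decompose against, and the cross-coupling between the $N$ Hessians $\nabla^2 u^\beta$ destroys the $2\times 2$ structure. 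The paper handles this through Lemma~\ref{algebra} (Balci--Cianchi--Diening--Maz'ya, Lemma 3.5 of \cite{BaCiDiMa}), a nontrivial inequality relating $J=\bigl|\sum_\beta H^\beta\omega^\beta\bigr|^2$, $J_0=\sum_\beta\langle\omega^\beta,\sum_l H^l\omega^l\rangle^2$, and $J_1=\sum_\beta|H^\beta|^2$, whose sharp constant is what produces precisely the function $h(p)$ in \eqref{h_p}. A direct $2\times 2$ discriminant computation does not reproduce this and would give a wrong (scalar) threshold. This is not a technical detail: it is the central new algebraic difficulty of the systems case compared with the scalar results of \cite{DamSci,SMM_vec}, and without invoking (or reproving) an inequality of that type your derivation of the coercivity constant, and hence of the admissible range of $\alpha$, does not go through.

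A secondary, smaller imprecision: weak $W^{1,2}$-compactness of $(\varepsilon+|D\boldsymbol u_\varepsilon|^2)^{(\gamma-1)/2}D\boldsymbol u_\varepsilon$ plus strong $L^p$ convergence of $D\boldsymbol u_\varepsilon$ does not by itself identify the limit as $|D\boldsymbol u|^{\gamma-1}D\boldsymbol u$; you also need pointwise a.e.\ convergence of $D\boldsymbol u_\varepsilon$ (which follows from the strong $L^p$ convergence along a subsequence) to match the two limits. The paper makes this explicit; your sketch glosses over it but the fix is routine.
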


\begin{remark}
    Notice that with $\gamma = p-1$ we recover the result in \cite{Cma}, namely $|D \boldsymbol u|^{p-2} D \boldsymbol u \in W^{1,2}(\Omega)$. Note also that our result, when $\gamma = p-1$, agrees with the estimate obtained in \cite{BaCiDiMa}, namely $|D \boldsymbol{u}|^{p-2} D \boldsymbol{u} \in W^{1,2}(\Omega)$, for $p > 2(2-\sqrt{2})$. 
\end{remark}

\begin{remark}
    
\end{remark}

As a consequence of Theorem \ref{teo1INTRO} we have the following 

\begin{corollary}
   Let $\boldsymbol u$, $\Omega$ and $\boldsymbol f$ as in Theorem \ref{teo1INTRO}. If
 $1<p < 3$, we have that $$\boldsymbol{u} \in W^{2,2}(\Omega).$$
\end{corollary}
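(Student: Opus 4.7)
The plan is to apply Theorem \ref{teo1INTRO} with the borderline choice $\gamma = 1$. With this value the conclusion \eqref{ciao2INTRO} reduces to $D\boldsymbol{u} \in W^{1,2}(\Omega)$; since any weak solution already satisfies $\boldsymbol{u} \in L^2(\Omega)$ (via Poincaré in the Dirichlet case, or working modulo the mean in the Neumann case), this is equivalent to $\boldsymbol{u} \in W^{2,2}(\Omega)$.

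The only thing to verify is that $\gamma = 1$ is admissible in Theorem \ref{teo1INTRO}, i.e., that some $\alpha$ in the allowed range satisfies $1 \geq (p-\alpha)/2$, equivalently $\alpha \geq p-2$. I would split into the two regimes used in the theorem. For $p \geq 3/2$, the theorem permits $\alpha < 1$; combined with the requirement $\alpha \geq p-2$, this gives a non-empty range of admissible $\alpha$ precisely when $p-2 < 1$, i.e.\ when $p < 3$ — and this is exactly where the hypothesis $p<3$ of the corollary enters. For $1 < p < 3/2$ the theorem permits $\alpha < h(p)$; a direct inspection of \eqref{h_p} shows $h(p) > 0$ on this interval (both factors in the numerator and the denominator are strictly positive), while $p-2 < 0$, so any $\alpha$ slightly above $0$ is admissible and trivially satisfies $\alpha \geq p-2$.

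The main (and essentially only) subtle point is the compatibility check in the range $p \geq 3/2$, which reveals that the bound $p < 3$ is sharp at the level of this argument: at $p = 3$ the condition $\alpha \geq p-2 = 1$ conflicts with the required $\alpha < 1$, and no admissible $\alpha$ is left. Once this is settled, the proof is pure bookkeeping: pick any admissible pair $(\gamma, \alpha)$ — for instance $\gamma = 1$ and $\alpha$ just below $1$ if $p \geq 3/2$, or $\gamma = 1$ and $\alpha$ a small positive number if $1 < p < 3/2$ — and invoke Theorem \ref{teo1INTRO} to conclude $\boldsymbol{u} \in W^{2,2}(\Omega)$.
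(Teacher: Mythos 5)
Your proposal is correct and follows exactly the same route as the paper: take $\gamma=1$ in Theorem~\ref{teo1INTRO}, which yields $D\boldsymbol{u}\in W^{1,2}(\Omega)$, and note that the restriction $p<3$ is precisely what makes some admissible $\alpha\geq p-2$ available (namely $\alpha<1$ for $p\geq 3/2$, or any small $\alpha\geq 0$ for $1<p<3/2$ since $h(p)>0$ there and $p-2<0$). The paper's own proof is a one-liner that leaves this admissibility check implicit; you have simply spelled it out.
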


We can prove the same results for bounded convex domains, with no additional regularity on $\partial \Omega$. This is due to the fact that the integrals over $\partial \Omega$ depend on its curvatures. In particular, when $\Omega$ is convex, these integrals could be disregarded since they have a definite sign.


\begin{theorem}[Convex domains]\label{conv_d}
    Let $\Omega$ be a bounded convex open set in $\mathbb{R}^n$. Let $\alpha$, $\boldsymbol{u}$ and $\boldsymbol{f}$ as in Theorem \ref{teo1INTRO}. Then $|D \boldsymbol u|^{\gamma-1} D \boldsymbol u \in W^{1,2}(\Omega)$, for any $\gamma \geq \frac{p-\alpha}{2}$. Moreover, if $1<p<3$, we have that $u\in W^{2,2}(\Omega)$.
\end{theorem}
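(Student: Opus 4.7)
The plan is to mirror the proof of Theorem \ref{teo1INTRO}, with the essential simplification that the boundary integrals produced by the integration by parts procedure now have a favorable sign and may simply be dropped, so that no smoothness or weak curvature assumption on $\partial \Omega$ is needed. Concretely, I would first approximate the bounded convex domain $\Omega$ by an increasing sequence of smooth bounded convex domains $\{\Omega_m\}$ (for instance $\Omega_m = (\Omega + \frac{1}{m} B_1)$ suitably mollified) with $\Omega_m \to \Omega$ in the Hausdorff metric. Each $\Omega_m$ has $C^\infty$ boundary, so in particular $\partial \Omega_m \in W^2 X$ for the space $X$ of \eqref{X}, and the second fundamental form $\mathcal{B}_{\partial \Omega_m}$ is positive semidefinite in view of convexity. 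Extending $\boldsymbol{f}$ to a function $\boldsymbol{f}_m$ on a neighborhood of $\overline{\Omega}$ while preserving the $W^{1,1}\cap L^q$ norms (with $q>n$), I solve on $\Omega_m$ the same problem (Dirichlet or Neumann) and obtain a solution $\boldsymbol{u}_m$ to which the a priori estimates developed for Theorem \ref{teo1INTRO} apply.

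The decisive step is to follow the a priori computation of Theorem \ref{teo1INTRO} on $\Omega_m$ and track the boundary contribution. The test-function argument used there yields, after the reduction to tangential derivatives appropriate to each of \eqref{Dir_cond} and \eqref{Neu_cond}, an identity of the schematic form
\begin{equation*}
\int_{\Omega_m} \bigl| D\bigl(|D\boldsymbol{u}_m|^{\gamma-1} D\boldsymbol{u}_m\bigr)\bigr|^2 \,dx
\;\le\; C \int_{\Omega_m} |\boldsymbol{f}_m|^2 |D\boldsymbol{u}_m|^{2\gamma - p} \,dx
\;-\; \int_{\partial \Omega_m} \mathcal{B}_{\partial \Omega_m}(\tau, \tau)\, |D\boldsymbol{u}_m|^{2\gamma - 2} \,d\mathcal{H}^{n-1},
\end{equation*}
where $\tau$ is the relevant tangential component of $D\boldsymbol{u}_m$. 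In the setting of Theorem \ref{teo1INTRO} the last integral is estimated by the norm of the weak curvatures in $L^{n-1,1}$ (or $L\log L$ if $n=2$); here, convexity of $\Omega_m$ makes $\mathcal{B}_{\partial \Omega_m}$ nonnegative, hence the entire boundary term is nonpositive and can be discarded. One is left with exactly the interior estimate of Theorem \ref{teo1INTRO}, now with constants depending only on $n$, $p$, $\alpha$, $\gamma$ and on $\|\boldsymbol{f}\|_{W^{1,1}\cap L^q}$, uniformly in $m$.

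Having the uniform bound on $\|\,|D\boldsymbol{u}_m|^{\gamma-1} D\boldsymbol{u}_m\|_{W^{1,2}(\Omega_m)}$, I pass to the limit. Standard arguments for the $p$-Laplace system provide compactness of $\boldsymbol{u}_m$ and convergence (up to subsequences) to the unique weak solution $\boldsymbol{u}$ of the limit problem on $\Omega$, together with the convergence of $D\boldsymbol{u}_m$ a.e.\ on compact subsets of $\Omega$. Weak lower semicontinuity of the $W^{1,2}$ norm and Fatou's lemma then transfer the estimate to $\Omega$, giving \eqref{ciao2INTRO} for any $\gamma \ge (p-\alpha)/2$. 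Finally, the $W^{2,2}$ statement for $1<p<3$ is obtained by the same Sobolev/Hölder argument used for the corollary to Theorem \ref{teo1INTRO}, applied to the admissible choice of $\gamma$ that brings the exponent down to $\gamma = 1$.

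The main obstacle, and where the care is required, is ensuring that the approximation is compatible with both sets of boundary conditions: for \eqref{Dir_cond} one needs the traces of $\boldsymbol{u}_m$ to stabilize on $\partial \Omega$ (which follows from uniform Hölder estimates up to the boundary), while for \eqref{Neu_cond} one must verify convergence of the conormal derivative and variational compatibility of $\boldsymbol{f}_m$. All of this is, however, technical rather than structurally delicate: the conceptual content of the proof is entirely captured by the sign of $\mathcal{B}_{\partial \Omega_m}$, which is precisely what replaces the weak curvature hypothesis of Theorem \ref{teo1INTRO}.
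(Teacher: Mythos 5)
Your proposal is correct and follows essentially the same route as the paper. The paper bakes the key observation into the ``Convex case'' part of Theorem \ref{stimaC^2}: when $\Omega$ is convex the boundary contribution (proportional to $\operatorname{tr}\mathcal{B}$ in the Dirichlet case, or to $\mathcal{B}(\nabla_T u^\beta,\nabla_T u^\beta)$ in the Neumann case) has a definite sign and is dropped, producing \eqref{abcdeh} and \eqref{abcdefgh} with constants independent of $K_\Omega$; the proof of Theorem \ref{conv_d} then reruns Steps 1--3 of Theorem \ref{teo1INTRO}, simply replacing the Lemma \ref{approxcap} approximants by smooth bounded convex open sets shrinking to $\Omega$ from outside. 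Two small points: $\Omega_m = \Omega + \tfrac{1}{m}B_1$ is a \emph{decreasing} outer approximation, not an increasing one; and in the paper the derivative is moved onto $\boldsymbol f$ by integration by parts, so the a priori bound is controlled through $\|\boldsymbol f\|_{W^{1,1}(\Omega)}$ and $\|\boldsymbol f\|_{L^q(\Omega)}$ (together with the $L^\infty$ gradient bound of \cite{Cma2}), rather than through a weighted $L^2$ norm of $\boldsymbol f$ as your schematic inequality suggests — but since you flag it as schematic, the substance is intact.
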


\noindent In \cite{SMM_vec}, the authors proved sharp second order estimates up to the boundary for the stress field of the $p$-Laplace equation, namely proving a global version of the local results in \cite{DamSci}. 
Note that the result in \cite{SMM_vec} was obtained for $C^3$-smooth domains 
via a fine argument based on the Fermi coordinates.

\noindent Here we succeed in proving global estimates for the vectorial case and, at the same time, we consider more general domains, namely Lipschitz domains which boundary are prescribed by a suitable condition involving integrability properties of the second fundamental form $\mathcal{B}$ associated to the manifold $\partial \Omega$. \\

\noindent Our results provide a global counterpart to the local estimates recently obtained in \cite{SMM_vec} but the techniques are completely new. Actually we   introduce some new ideas
also borrowing arguments from \cite{Cma} and \cite{SMM_vec}.\\

\noindent We point out that Theorem \ref{teo1INTRO} holds without any sign assumption on the source term $\boldsymbol{f}$. If, in addition, a sign condition is imposed on $\boldsymbol{f}$, then we obtain the integrability properties of $|D\boldsymbol{u}|^{-1}$, namely
\begin{theorem}\label{peso_stima_Intro}
     Let $\boldsymbol u$, $\Omega$ and $\boldsymbol f$ as in Theorem \ref{teo1INTRO}. Suppose that, for any $x\in \overline \Omega$, there exists $\rho_x$ such that:
     \begin{equation*}
         f^\beta \geq \tau_x > 0\quad \text{or } f^\beta \leq -\tau_x < 0 \quad \text{in } B_\rho(x)\cap \overline\Omega,
     \end{equation*}
  for some $\beta =1,...,N$. Then:
     \begin{equation*}
         \int_{\Omega} \frac{1}{{|D \boldsymbol{u}|}^\sigma}\leq C,
     \end{equation*}
     for any
     \begin{equation}\label{cond_sigma_I}
         \sigma <
         \begin{cases}
             p-2+h(p) & \mbox{if $p\leq \frac{3}{2}$}\\
             p-1 & \mbox{if $p>\frac{3}{2}$},
         \end{cases}
     \end{equation}
     and $C=C(\tau,\sigma,p,n,N,L_\Omega,d_\Omega,\|\boldsymbol{f}\|_{L^q(\Omega)},\|\boldsymbol{f}\|_{W^{1,1}(\Omega)})$ is a positive constant. \\
     \noindent Consequently, for $p \geq 3$, we have that
     \begin{center}
     $\boldsymbol{u}\in W^{2,q}(\Omega),\quad$ for any $ \,\,1\leq q < \frac{p-1}{p-2}$.
     \end{center} 
\end{theorem}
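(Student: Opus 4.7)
The plan is to upgrade the sign condition on $f^\beta$ to a quantitative non-degeneracy of $|D\boldsymbol{u}|$ by feeding the sign directly into the $\beta$-th component of the system and then pairing the outcome with the integral estimate supplied by Theorem \ref{teo1INTRO}. The output is the weighted bound \eqref{cond_sigma_I}, from which the $W^{2,q}$ statement follows by a H\"older split.

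The first step is to expand, away from $\{D\boldsymbol{u}=0\}$, the $\beta$-th equation as
\begin{equation*}
    -\,|D\boldsymbol{u}|^{p-2}\Delta u^\beta \;-\;(p-2)\,|D\boldsymbol{u}|^{p-3}\bigl(D|D\boldsymbol{u}|\bigr)\cdot Du^\beta \;=\; f^\beta,
\end{equation*}
and exploit $|D|D\boldsymbol{u}||\le|D^2\boldsymbol{u}|$ together with $|Du^\beta|\le|D\boldsymbol{u}|$ to deduce the pointwise inequality $|f^\beta|\le C(p,n)\,|D\boldsymbol{u}|^{p-2}\,|D^2\boldsymbol{u}|$. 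On any ball $B_{\rho_x}(x)\cap\overline\Omega$ where $|f^\beta|\ge\tau_x>0$, this rearranges to
\begin{equation*}
    |D\boldsymbol{u}|^{2(\gamma-1)}|D^2\boldsymbol{u}|^2 \;\ge\; \frac{\tau_x^2}{C^2}\,|D\boldsymbol{u}|^{-(2p-2-2\gamma)}.
\end{equation*}
Choosing $\gamma=(p-\alpha)/2$, which is the smallest admissible value under the hypotheses of Theorem \ref{teo1INTRO}, the exponent on the right collapses to exactly $p-2+\alpha$.

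Step~2 brings in Theorem \ref{teo1INTRO}: a direct chain-rule computation shows that $|D\boldsymbol{u}|^{\gamma-1}D\boldsymbol{u}\in W^{1,2}(\Omega)$ implies $\int_\Omega|D\boldsymbol{u}|^{2(\gamma-1)}|D^2\boldsymbol{u}|^2\,dx\le C$ (the cross terms have a favorable sign when $\gamma\ge 1$, and are controlled by $|D|D\boldsymbol{u}||\le|D^2\boldsymbol{u}|$ otherwise). Integrating the pointwise inequality of Step~1 on $B_{\rho_x}(x)\cap\Omega$ therefore yields
\begin{equation*}
    \int_{B_{\rho_x}(x)\cap\Omega}\frac{dy}{|D\boldsymbol{u}|^{\,p-2+\alpha}}\;\le\; C,
\end{equation*}
and a compactness argument (covering $\overline\Omega$ by finitely many such balls and summing) delivers \eqref{cond_sigma_I} for every $\sigma<p-2+\alpha$. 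Since $\alpha<h(p)$ when $1<p\le 3/2$ and $\alpha<1$ otherwise, we obtain the full declared range of exponents. For the $W^{2,q}$ conclusion when $p\ge 3$, I would split via H\"older, with exponents $2/q$ and $2/(2-q)$,
\begin{equation*}
    \int_\Omega|D^2\boldsymbol{u}|^q\,dx\;=\;\int_\Omega\bigl(|D\boldsymbol{u}|^{\gamma-1}|D^2\boldsymbol{u}|\bigr)^q\,|D\boldsymbol{u}|^{-(\gamma-1)q}\,dx,
\end{equation*}
and ask $\frac{2(\gamma-1)q}{2-q}<p-1$, which, by choosing $\alpha<1$ close to $1$, reduces to the sharp bound $q(p-2)<p-1$.

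The main obstacle is that the pointwise expansion of Step~1 is purely formal on a weak solution. I would carry out every manipulation on a regularized family $\boldsymbol{u}_\varepsilon$ (smooth enough for pointwise computations), derive all integral inequalities with constants uniform in $\varepsilon$, and pass to the limit using the lower semicontinuity of $\boldsymbol{u}\mapsto\int_\Omega|D\boldsymbol{u}|^{-\sigma}$ along the a.e.\ convergence of gradients that is already established in the course of proving Theorem \ref{teo1INTRO}. Tracking the dependence of constants on $\tau_x$, $\rho_x$, $L_\Omega$, $d_\Omega$, and the norms of $\boldsymbol{f}$ is routine but bears attention.
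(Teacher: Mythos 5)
Your proof is correct and takes a genuinely different route from the paper's. The paper's Lemma~\ref{ineq_peso_u} proceeds variationally: it tests the $\beta$-th equation against the weight $\psi^2/(\varepsilon+|D\boldsymbol{u}|^2)^{\sigma/2}$, integrates by parts, and must then analyze the boundary term case by case (it vanishes under \eqref{Neu_cond}, has a favorable sign under \eqref{Dir_cond}) before absorbing a small piece via a weighted Young inequality. You instead expand the divergence of the regularized system pointwise, obtain $|f^\beta| \le C(p,n)\,(\varepsilon+|D\boldsymbol{u}_\varepsilon|^2)^{(p-2)/2}\|D^2\boldsymbol{u}_\varepsilon\|$ a.e.\ (legitimate since $\boldsymbol{u}_\varepsilon\in W^{2,2}$, so the weak equation holds in the strong sense a.e.), rearrange, and integrate directly. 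This bypasses the boundary analysis completely and treats Dirichlet and Neumann uniformly, which is a real simplification; the covering, partition and $\varepsilon\to 0$ limit by Fatou then proceed exactly as in the paper. Two points bear attention. First, Theorem~\ref{teo1INTRO} only records the membership $|D\boldsymbol u|^{\gamma-1}D\boldsymbol u\in W^{1,2}(\Omega)$, whereas you actually need the quantitative a priori bound \eqref{second_deriv_eps} (equivalently \eqref{bound_quasi_stress_field}) established inside its proof, with the stated dependence of the constant on the data; cite that estimate directly rather than inferring it from the theorem's conclusion. Second, your two-sided chain-rule claim is correct but deserves a word: the differential of $\xi\mapsto|\xi|^{\gamma-1}\xi$ has eigenvalues $|\xi|^{\gamma-1}$ and $\gamma|\xi|^{\gamma-1}$, so it is invertible with controlled inverse precisely when $\gamma>0$, which holds here because $\alpha<\min\{1,h(p)\}<p$; for $\gamma<1$ the cross term in the differential has an unfavorable sign and the lower bound comes from this eigenvalue computation, not from positivity of all terms.
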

\begin{remark}
    The sign condition on the source term in Theorem \ref{peso_stima_Intro}  for the Neumann case, actually requires $f^\beta$ to have a sign on the boundary. This is not required in the proofs but we have in mind the compatibility   condition for the existence of the solution. For the Dirichlet case, since a general Hopf boundary Lemma is not available for the vectorial case, the sign condition turns out to be not so restrictive even if the source term has a sign all over the domain. 
\end{remark}
In light of Theorem \ref{conv_d}, if $\Omega$ is a convex domain we can prove the following result with no additional assumptions on the boundary.

\begin{theorem}[Convex domains]\label{conv_d_peso}
    Let $\Omega$ be a bounded convex open set in $\mathbb{R}^n$. Let $\alpha$, $\boldsymbol{u}$ and $\boldsymbol{f}$ as in Theorem \ref{peso_stima_Intro}. Then \begin{equation*}
         \int_{\Omega} \frac{1}{{|D \boldsymbol{u}|}^\sigma}\leq C.
     \end{equation*}
    Moreover, if $p\geq 3$, we have that $u\in W^{2,q}(\Omega)$, with $q< (p-1)/(p-2)$
\end{theorem}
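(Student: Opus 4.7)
\textbf{Proof plan for Theorem \ref{conv_d_peso}.}
The strategy is to run the same argument as in the proof of Theorem \ref{peso_stima_Intro}, but replacing every appeal to Theorem \ref{teo1INTRO} (valid on Lipschitz domains whose boundary enjoys $W^2X$ regularity) by the convex-domain version given in Theorem \ref{conv_d}. The only place where the hypothesis $\partial\Omega\in W^2X$ entered the proof of Theorem \ref{peso_stima_Intro} was in handling the boundary integrals produced by integration by parts: these involve the second fundamental form $\mathcal{B}$ of $\partial\Omega$ and were estimated through integrability of the weak curvatures. When $\Omega$ is convex, $\mathcal{B}$ is positive semidefinite with respect to the outward normal $\boldsymbol{\nu}$, and the boundary terms turn out to have the favorable sign, so they may simply be dropped. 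To make this rigorous one may approximate $\Omega$ by an increasing family of smooth convex domains $\{\Omega_k\}$ à la \cite{Cma}, prove the estimate on each $\Omega_k$ with explicit positive boundary terms that we discard, and then pass to the limit as $k\to\infty$.

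With the convex-reduction in hand, the weighted estimate is obtained exactly as in Theorem \ref{peso_stima_Intro}. Cover $\overline\Omega$ by finitely many balls $B_{\rho_{x_j}}(x_j)$ on each of which some component $f^{\beta_j}$ has a definite sign. On each such ball one tests the scalar equation satisfied by $u^{\beta_j}$ with a suitable negative power of $|D\boldsymbol{u}|$ multiplied by a cut-off; differentiation and integration by parts produce boundary terms controlled by convexity and interior terms that are bounded, via Cauchy–Schwarz, in terms of $\||D\boldsymbol{u}|^{\gamma-1}D\boldsymbol{u}\|_{W^{1,2}(\Omega)}$. Theorem \ref{conv_d} provides exactly this bound for every $\gamma\geq (p-\alpha)/2$. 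Summing the local inequalities produces
\[
\int_\Omega \frac{1}{|D\boldsymbol{u}|^\sigma}\,dx\leq C,
\]
for $\sigma$ in the range \eqref{cond_sigma_I}.

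For the second conclusion, assume $p\geq 3$ and fix $q<(p-1)/(p-2)$. Since Theorem \ref{conv_d} yields $\mathcal{F}_\gamma:=|D\boldsymbol{u}|^{\gamma-1}D\boldsymbol{u}\in W^{1,2}(\Omega)$, away from the critical set $\{D\boldsymbol{u}=0\}$ we have the pointwise bound
\[
|D^2\boldsymbol{u}|\leq C_{p,\gamma}\,|D\boldsymbol{u}|^{1-\gamma}\,|D\mathcal{F}_\gamma|.
\]
Hölder's inequality with exponents $2/q$ and $2/(2-q)$ then gives
\[
\int_\Omega |D^2\boldsymbol{u}|^q\,dx\leq C\left(\int_\Omega |D\boldsymbol{u}|^{-\frac{2(\gamma-1)q}{2-q}}\,dx\right)^{\frac{2-q}{2}}\left(\int_\Omega |D\mathcal{F}_\gamma|^2\,dx\right)^{\frac{q}{2}}.
\]
Since $p\geq 3>3/2$ we may choose $\gamma$ arbitrarily close to $(p-1)/2$ and $\sigma$ arbitrarily close to $p-1$; the constraint $\frac{2(\gamma-1)q}{2-q}<\sigma$ then reduces to $q<\frac{p-1}{p-2}$, and the first factor is finite by the weighted estimate just proved, while the second is finite by Theorem \ref{conv_d}.

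\textbf{Main obstacle.} The genuine technical point is the rigorous derivation of the differential identity behind the weighted estimate in the vectorial setting, in particular the manipulation of negative powers of $|D\boldsymbol{u}|$ near the critical set, and the simultaneous justification of the boundary integration by parts on a merely convex domain. Both issues are handled by regularizing the $p$-Laplace operator (replacing $|D\boldsymbol{u}|^{p-2}$ by $(|D\boldsymbol{u}|^2+\varepsilon)^{(p-2)/2}$), approximating $\Omega$ by smooth convex domains, performing all computations on the approximate problem, and then passing to the limit using the uniform bounds inherited from Theorem \ref{conv_d} together with the positivity of $\mathcal{B}$ which guarantees that the discarded boundary terms remain nonnegative throughout the limiting procedure.
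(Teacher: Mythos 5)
Your proposal follows essentially the same route as the paper: rerun the proof of Theorem~\ref{peso_stima_Intro} but invoke Theorem~\ref{conv_d} in place of Theorem~\ref{teo1INTRO} (so that the estimate \eqref{cosette} is available with a constant independent of the curvature quantity $K_\Omega$), regularize the operator and the domain, exploit the favorable sign of the convex boundary contributions, and pass to the limit; the $W^{2,q}$ assertion then follows from H\"older's inequality as you describe. One small correction: the paper approximates $\Omega$ \emph{from the outside} by smooth convex domains $\Omega_m \supset \Omega$ (as in Lemma~\ref{approxcap}), not by an increasing inner family, so that the estimates obtained on $\Omega_m$ restrict directly to $\Omega$; also note that the boundary integrals in the weighted Lemma~\ref{ineq_peso_u} already carry a sign on any $C^2$ domain without invoking $\mathcal{B}$, and the curvature term only enters through Theorem~\ref{conv_d}.
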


\noindent 

\noindent Before starting the proofs of our results let us discuss the state of the art on the regularity theory for $p$-Laplace equations. The $C^{1,\alpha}$ theory is quite well understood in the scalar case, we refer to \cite{AvKuMi,Bar,12,13,23,24,DeFMi,DM,CiDeSci,DM2,DM3,DM4,GuMo,Libb,LU} and the references therein. Second order estimates in the scalar case goes back to \cite{antonuovo,DamSci,Ce,ACCFM,CiaMa,CaRiSc,SMM_vec,tolk,lou}. Some new estimates on the third derivatives of the solution were recently obtained in \cite{beni}.

\noindent On the other hand, the vectorial case is very much harder to study. Founding results have been obtained in \cite{ChenDiB,Cma,Cma2,BaCiDiMa,MMSV,KuuMin,16,DP,Marc,DiKaSc2,M}.

\vspace{0.3cm}

Lastly, for completeness, we include a local estimate:

\begin{theorem}[Local case]\label{local}
  Let $\Omega$ be an open set in $\R^n$, 
    and let $\boldsymbol{u}$ be a weak solution to \eqref{system1},  with $\boldsymbol f(x)\in W^{1,1}_{loc}(\Omega)\cap L^q_{loc}(\Omega)$ and $q>n$.
    If $1 <
    p< \frac{3}{2}$, let us assume that $\alpha < h(p)$, with $h(p)$ given in \eqref{h_p}, while $\alpha < 1$ if $p\geq \frac{3}{2}$.

Then,
$$
    |D \boldsymbol u|^{\gamma-1} D \boldsymbol u \in W^{1,2}_{loc}(\Omega),
$$
for any $\gamma \geq \frac{p-\alpha}{2}$. Moreover, we have that$$\boldsymbol{u}\in W_{loc}^{2,2}(\Omega)\quad \text{for } 1<p<3.$$\\
    On the other hand, for $\tilde\Omega\subset\subset\hat\Omega\subset\subset\Omega$,
    if we suppose that, for any $x\in  \hat\Omega$, there exists $\rho_x$ such that:
     \begin{equation*}
         f^\beta \geq \tau_x > 0\quad \text{or } f^\beta \leq -\tau_x < 0 \quad \text{in } B_\rho(x)\cap \hat \Omega,
     \end{equation*}
  for some $\beta =1,...,N$,  it follows that 
     \begin{equation*}
         \int_{\tilde \Omega} \frac{1}{{|D \boldsymbol{u}|}^\sigma}\leq C,
     \end{equation*}
     for any $\sigma$ satisfying \eqref{cond_sigma_I}, and where $C=C(\tau,\sigma,p,n,N, \tilde\Omega,\hat\Omega,\|\boldsymbol{f}\|_{L^q(\hat \Omega)},\|\boldsymbol{f}\|_{W^{1,1}(\hat \Omega)})$ is a positive constant. \\
     \noindent Consequently, for $p \geq 3$, we prove that
     \begin{center}
     $\boldsymbol{u}\in W^{2,q}(\tilde\Omega),\quad$ for any $ \,\,1\leq q < \frac{p-1}{p-2}$.
     \end{center} 
\end{theorem}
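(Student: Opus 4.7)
The plan is to localize the machinery behind Theorem \ref{teo1INTRO} and Theorem \ref{peso_stima_Intro} by inserting a cutoff that removes every boundary contribution. Fix subdomains $\tilde\Omega\subset\subset\hat\Omega\subset\subset\Omega$ and a cutoff $\eta\in C_c^\infty(\hat\Omega)$ with $\eta\equiv 1$ on $\tilde\Omega$ and $0\le\eta\le 1$. Regularize the system by replacing $|D\boldsymbol{u}|^{p-2}$ with $(|D\boldsymbol{u}|^2+\varepsilon)^{(p-2)/2}$, thereby obtaining smooth approximating solutions $\boldsymbol{u}_\varepsilon$ (with $\boldsymbol{f}$ mollified to $\boldsymbol{f}_\varepsilon\in C^\infty(\hat\Omega)$) of the non-degenerate system, which enjoy interior $C^{1,\beta}$ and $W^{2,2}_{loc}$ bounds uniform in $\varepsilon$ on compact subsets of $\hat\Omega$, since $\boldsymbol{f}\in L^q_{loc}$ with $q>n$. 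All subsequent estimates will be derived uniformly in $\varepsilon$ and then passed to the limit.

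The first main step is to differentiate the regularized equation and test against $\eta^{2}(|D\boldsymbol{u}_\varepsilon|^{2}+\varepsilon)^{\gamma-p/2}\,\partial_j\boldsymbol{u}_\varepsilon$, which is the natural quantity whose integration by parts generates $|D(|D\boldsymbol{u}_\varepsilon|^{\gamma-1}D\boldsymbol{u}_\varepsilon)|^{2}\eta^{2}$ on the left-hand side. Because $\eta$ is compactly supported in $\hat\Omega$, every boundary integral that forced the assumption $\partial\Omega\in W^{2}X$ (or convexity) in the proofs of Theorem \ref{teo1INTRO} and Theorem \ref{conv_d} simply vanishes. What remains are interior terms: the coercive second-order quadratic form, the source term (handled by Hölder using $\boldsymbol{f}\in W^{1,1}\cap L^q$), and cutoff remainders of the form $|\nabla\eta|^{2}$ and $\nabla\eta\otimes D\boldsymbol{u}_\varepsilon$, which are bounded by the uniform interior $L^\infty$ estimate for $D\boldsymbol{u}_\varepsilon$ on $\overline{\hat\Omega}$. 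Absorbing the indefinite contribution via Young's inequality under $\alpha<h(p)$ (resp.\ $\alpha<1$ when $p\ge 3/2$) yields the uniform bound on $\eta|D(|D\boldsymbol{u}_\varepsilon|^{\gamma-1}D\boldsymbol{u}_\varepsilon)|$ in $L^{2}$, and letting $\varepsilon\to 0$ gives $|D\boldsymbol{u}|^{\gamma-1}D\boldsymbol{u}\in W^{1,2}(\tilde\Omega)$. The consequence $\boldsymbol{u}\in W^{2,2}_{loc}$ for $1<p<3$ then follows from the same algebraic identity used in \cite{SMM_vec,DamSci}, which is a pointwise relation between $D^2\boldsymbol{u}$ and $D(|D\boldsymbol{u}|^{\gamma-1}D\boldsymbol{u})$ not involving the boundary.

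For the weighted estimate, I would mirror the argument behind Theorem \ref{peso_stima_Intro}. The sign hypothesis on some component $f^\beta$ in $B_{\rho_x}(x)$ is local, so after covering $\overline{\tilde\Omega}$ by finitely many such balls (with closures in $\hat\Omega$) I would test the $\beta$-th equation of the regularized system against $\eta^{2}(|D\boldsymbol{u}_\varepsilon|^{2}+\varepsilon)^{-\sigma/2}u^{\beta}_\varepsilon$ (or the exact quantity employed in the global proof), use the sign of $f^\beta$ to control the term $\int \eta^{2}f^\beta u^\beta_\varepsilon(|D\boldsymbol{u}_\varepsilon|^{2}+\varepsilon)^{-\sigma/2}$ from below, and close the estimate under $\sigma<p-2+h(p)$ (or $\sigma<p-1$ for $p>3/2$). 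Again the $\eta$-cutoff kills all boundary terms, and the covering constants only depend on $\tilde\Omega,\hat\Omega$ and the $L^q$/$W^{1,1}$ norms of $\boldsymbol{f}$ on $\hat\Omega$. The final statement $\boldsymbol{u}\in W^{2,q}(\tilde\Omega)$ for $p\ge 3$ and $q<(p-1)/(p-2)$ is then Hölder's inequality applied to the factorization $|D^{2}\boldsymbol{u}|\le c\,|D\boldsymbol{u}|^{2-p}\bigl|D(|D\boldsymbol{u}|^{p-2}D\boldsymbol{u})\bigr|$.

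The main obstacle, as in the global case, is the delicate absorption in the degenerate range $1<p<3/2$, where the quadratic form produced by differentiating $|D\boldsymbol{u}|^{p-2}D\boldsymbol{u}$ contains indefinite cross terms whose sharp control requires precisely the threshold $\alpha<h(p)$. In the local setting the benefit is that no curvature or chart terms appear, so the algebra collapses to the clean interior identity already isolated in \cite{SMM_vec,DamSci}; what must be tracked carefully is merely that every constant arising from the cutoff depends only on $\operatorname{dist}(\tilde\Omega,\partial\hat\Omega)$ and $\operatorname{dist}(\hat\Omega,\partial\Omega)$, so that the final bound depends on the data in the form stated. Modulo this bookkeeping and the uniform interior $C^{1,\beta}$ approximation, the local theorem is a direct consequence of the computations developed for the global statements.
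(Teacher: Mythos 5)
Your overall plan is the one the authors have in mind: run the identity behind Theorem \ref{stimaC^2} with a cutoff $\varphi = \eta \in C_c^\infty(\hat\Omega)$ so that every boundary integral in \eqref{a_4} vanishes (the paper notes explicitly that \eqref{abc} then holds when $B_r(x_0)\subset\Omega$), and reproduce the $\varepsilon$-regularization, smooth-function approximation, and limit passages of Theorems \ref{teo1INTRO} and \ref{peso_stima_Intro}. Your test function $\eta^{2}(\varepsilon+|D\boldsymbol{u}_\varepsilon|^{2})^{\gamma-p/2}\partial_j\boldsymbol{u}_\varepsilon$ matches the paper's $\boldsymbol{u}_i\varphi^2(\varepsilon+|D\boldsymbol{u}|^2)^{-\alpha/2}$ when $\gamma=\frac{p-\alpha}{2}$, and your remark on $\boldsymbol{u}\in W^{2,2}_{loc}$ is correct, though the cleaner observation (cf.\ Corollary \ref{u_W2,2}) is simply that $\gamma=1$ is admissible, since $\alpha\in[\,p-2,1)$ is nonempty precisely for $p<3$.

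The last step, however, contains a genuine gap. You claim $\boldsymbol{u}\in W^{2,q}(\tilde\Omega)$ for $p\ge 3$, $q<\frac{p-1}{p-2}$, follows from H\"older applied to $\|D^{2}\boldsymbol{u}\|\le c\,|D\boldsymbol{u}|^{2-p}\,|D(|D\boldsymbol{u}|^{p-2}D\boldsymbol{u})|$, i.e.\ from the fixed choice $\gamma=p-1$. Writing $\|D^2\boldsymbol{u}\|^q = \bigl(|D\boldsymbol{u}|^{p-2}\|D^2\boldsymbol{u}\|\bigr)^q\,|D\boldsymbol{u}|^{-(p-2)q}$ and applying H\"older with exponents $\frac{2}{q},\frac{2}{2-q}$ forces
\begin{equation*}
\frac{2(p-2)q}{2-q}<p-1
\quad\Longleftrightarrow\quad
q<\frac{2(p-1)}{3p-5},
\end{equation*}
which for every $p\ge 3$ is $\le 1$ and therefore vacuous, since $W^{2,q}$ requires $q\ge 1$. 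The stated range can only be reached by exploiting the full one-parameter family of estimates: take $\alpha$ close to $1$, so the available bound is
\begin{equation*}
\int_{\tilde\Omega}(\varepsilon+|D\boldsymbol{u}_\varepsilon|^2)^{\frac{p-2-\alpha}{2}}\|D^2\boldsymbol{u}_\varepsilon\|^2\,dx\le C,
\end{equation*}
and H\"older then requires only $\frac{(p-2-\alpha)q}{2-q}<p-1$, which as $\alpha\to 1^-$ gives exactly $q<\frac{p-1}{p-2}$. This is what the paper does (``taking $\alpha\simeq1$'') in the corollary following Theorem \ref{peso_stima_Intro}. With the exponent frozen at $p-1$, your argument cannot recover the stated conclusion for any $p\ge 3$.
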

\begin{remark}
    Since the technique used to prove Theorem \ref{local} is similar to that of Theorem \ref{teo1INTRO} and Theorem \ref{peso_stima_Intro}, we omit the proof of Theorem \ref{local}. In the case of $\boldsymbol{f}\in W_{loc}^{1,1}(\Omega)\cap C_{loc}^{0,\beta}(\Omega)$ and $\alpha < p-1$ for $p< 2$ a local estimate is proved in \cite{MMSV}.
\end{remark}

We organize the paper as follows. In Section \ref{seconda}, we state some useful preliminary results. In Section \ref{SOE}, we deal with second order estimates for solutions of both \eqref{pLaplace} + \eqref{Dir_cond} and \eqref{pLaplace} + \eqref{Neu_cond}. In particular we prove Theorem \ref{teo1INTRO}. Finally, in Section \ref{integrability_section}, with the additional assumption that the right hand side has a sign, we prove Theorem \ref{peso_stima_Intro}.

\section{Notation and preliminary results}\label{seconda}
\subsection*{Notation}\label{Notazioni}
Generic fixed numerical constants will be denoted by $C$ (with subscript in some cases) and will be allowed to vary within a single line or formula. We also denote with $|A|$ the Lebesgue measure of the set $A$.

We will use the bold style to stress the vectorial nature of  different quantities. For instance, a $N$-vectorial function $\boldsymbol{w}$ defined in $\Omega$ will be written as
$$
\boldsymbol{w}(x)=\left(w^1(x),\ldots,w^N(x) \right),
$$
where $w^\ell$ is a scalar function defined in $\Omega$ for $\ell=1,\ldots,N$.

\

We say that a vector field ${\boldsymbol u}$ is a weak solution to the Neumann problem \eqref{system1} + \eqref{Neu_cond} if and only if ${\boldsymbol u} \in W^{1,p}(\Omega):=W^{1,p}(\Omega;\mathbb{R}^N)$ and 
\begin{equation*}
\int_{\Omega} |D{\boldsymbol u}|^{p-2}D {\boldsymbol u} : D {\boldsymbol \psi}\, dx = \int_{\Omega} \langle{\boldsymbol f}, \boldsymbol{\psi}\rangle\, dx, \quad \forall  {\boldsymbol \psi} \in W^{1,p}(\Omega).
\end{equation*}
Obviously, a necessary condition for existence of weak solutions of the Neumann problem relies on the following compatibility constraint:
\begin{equation*}
    \int_{\Omega} f^j\, dx = 0, \quad \forall{j=1,...,N}. 
\end{equation*}

In analogy, a vector field ${\boldsymbol u}$ weakly  solves the Dirichlet problem \eqref{system1} + \eqref{Dir_cond} if and only if ${\boldsymbol u} \in W^{1,p}_0(\Omega):=W^{1,p}_0(\Omega;\mathbb{R}^N)$ and 
\beq\label{weak}
\int_{\Omega} |D{\boldsymbol u}|^{p-2}D {\boldsymbol u} : D {\boldsymbol \psi}\, dx = \int_{\Omega} \langle{\boldsymbol f}, \boldsymbol{\psi}\rangle\, dx, \quad \forall  {\boldsymbol \psi} \in W^{1,p}_0(\Omega).
\eeq

Here, $D \boldsymbol{u}$ denotes the Jacobian matrix:
\[D {\boldsymbol u}=  \begin{pmatrix}
         \nabla u^1 \\
         \vdots \\
         \nabla u^N
        \end{pmatrix},\] 
        where 
\[\nabla u^\ell = \left(\frac{\partial u^\ell}{\partial x_1},\ldots,\frac{\partial u^\ell}{\partial x_n}\right)\,\, \text{for } \ell=1,\ldots,N,\]
and
\[\displaystyle{|D {\boldsymbol u}|=\sqrt{\sum_{\ell=1}^N\sum_{j=1}^n \left(\frac{\partial u^\ell}{\partial x_j}\right)^2}}.\] 
We also use the notations $\displaystyle u_j^\ell=\dfrac{\partial u^\ell}{\partial x_j}$ and $\displaystyle u_{ij}^\ell=\dfrac{\partial^2 u^\ell}{\partial x_i\partial x_j}$.

We point out that along this paper the symbol $\,:\,$ stands for the scalar product of the matrices rows, namely 
$$
\mathcal{M}:\mathcal{N}=\sum_{i=1}^{q} \mathcal{M}^i \cdot \mathcal{N}^i=\sum_{i=1}^{q} \sum_{j=1}^{r} \mathcal{M}^i_j \mathcal{N}^i_j,
$$
where $\mathcal{M},\mathcal{N} \in \mathbb{R}^{q \times r}$ are real matrices, whereas $ \cdot$ denotes the scalar product of two real vectors.

Let $\{ \boldsymbol{e}^\alpha\}_{\alpha =1}^N$, and ${\{ \boldsymbol{e}_j\}}^n_{j =1}$ be the canonical basis of $\mathbb{R}^N$ and $\mathbb{R}^n$, respectively. We will also denote a second-order tensor of size $N\times n$ as
\begin{equation*}
    \eta = \eta_j^\alpha \ \boldsymbol{e}^\alpha \otimes \boldsymbol{e}_j,
\end{equation*}
and a third-order tensor of size $N\times n \times n$ as
\begin{equation*}
    \xi = \xi_{ij}^\alpha \ \boldsymbol{e}^\alpha \otimes \boldsymbol{e}_j \otimes \boldsymbol{e}_i.
\end{equation*}
In particular, by $D^2\boldsymbol{u}$ we mean:
\begin{equation*}
    D^2 \boldsymbol{u} = u_{ij}^\alpha \ \boldsymbol{e}^\alpha \otimes \boldsymbol{e}_j \otimes\boldsymbol{e}_i,
\end{equation*}
and we define its norm as follows
\[\|D^2 {\boldsymbol u} \|=\sqrt{\sum_{i=1}^N\sum_{j,k}(u^i_{jk})^2}.\]

In order to simplify the computations below, we define the vector
\begin{equation}\label{eq:vect}
D^2\boldsymbol{u}D\boldsymbol{u}:=\left(\begin{array}{c}D\boldsymbol{u}:D\boldsymbol{u}_{1}\\
\vdots \\D\boldsymbol{u}:D\boldsymbol{u}_{i}\\\vdots
\\
D\boldsymbol{u}:D\boldsymbol{u}_{n}
\end{array}\right)
\end{equation}
The previously defined vector stands for the application of a third-order tensor to a second-order one.

In what follows we often use the inequality
\begin{equation}\label{eq:FraCo}
|D^2\boldsymbol u D\boldsymbol u|\leq |D\boldsymbol{u}|\|D^2\boldsymbol{u}\|.
\end{equation}

\

\subsection*{Setting of the problem}
Before start this section, let us stress our gratitude for the tools that we found in \cite{Ant,ACCFM,BaCiDiMa,Cma2,CiaMa,Cma,Gr}.\\
We consider $\Omega$ a bounded Lipschitz domain and we assume that the functions of $(n-1)$ variables that
locally describe the boundary of $\Omega$ are twice weakly differentiable with second derivatives in a suitable space $X$, briefly $\partial \Omega \in  W^2X$.
An open set $\Omega$ in $\R^n$ is called a Lipschitz domain if there exist constants $L_\Omega>0$ and $R_\Omega\in (0,1)$ such that, for every $x_0\in \partial \Omega$ and $R \in (0, R_\Omega]$ there exist an orthogonal coordinate system centered at $0\in \R^n$ and an $L_\Omega$-Lipschitz continuous function $\psi : B'_{R}\rightarrow (-l,l)$, where $B'_{R}$ denotes the ball in $R^{n-1}$, centered at $0' \in \R^{n-1}$ and with radius $R$, and $l=R(1+L_\Omega)$, satisfying 

\begin{equation*}
    \begin{split}
        \partial \Omega \cap \left(B'_R \times (-l,l)\right)=\{(x',\psi(x')) : x'\in B'_R\} \\ \Omega \cap \left(B'_R \times (-l,l)\right)=\{(x',x_n) : \psi(x')<x_n<l\}.
    \end{split}
\end{equation*}

 Moreover, we call Lipschitz characteristic of $\Omega$ the couple $\mathcal{L}_\Omega=(L_\Omega,R_\Omega)$. 

\

\

Let $\Omega$ be a Lipschitz domain with Lipschitz characteristic $\mathcal{L}_\Omega = (L_\Omega, R_\Omega)$, and let $\rho\in L^1(\partial \Omega)$ be a nonnegative function. We set, for $r \in(0, R_\Omega]$,

\begin{equation}\label{defK}
    \mathcal{K}_{\Omega,\rho}(r)=\sup_{E\subset B_r(x) ,  x\in\partial \Omega} \frac{\int_{\partial \Omega\cap E}\rho \,d\mathcal{H}^{n-1}}{cap(B_r(x),E)},
\end{equation}
and
\begin{equation}\label{integrability}
    \Psi_{\Omega,\rho}(r)=
    \begin{cases}
        \sup_{x \in \partial \Omega} \| \rho \|_{L^{n-1,\infty}(\partial \Omega \cap B_r(x))} & \mbox{if $n \geq 3$}\\
        \sup_{x \in \partial \Omega} \| \rho \|_{L^{1,\infty}\log{L}(\partial \Omega \cap B_r(x))} & \mbox{if $n = 2$}
    \end{cases}
\end{equation}

Here, $B_r(x)$ stands for the ball centered at $x$, with radius $r$, the notation $cap(B_r(x),E)$ denotes the capacity of the set $E$ relative to the ball $B_r(x)$, and $\mathcal{H}^{n-1}$ is the $(n-1)$-dimensional Hausdorff measure. 

Recall that the Lorentz spaces $L^{p,q}(\partial \Omega)$, with $p \geq 1$, are the Banach function spaces endowed with the norm:
\begin{equation*}
    \| f \|_{L^{p,q}(\partial \Omega)} = 
    \begin{cases}
        \left \{ \int_0^{\mathcal{H}^{n-1}(\partial\Omega)} [t^\frac{1}{p} f^{**}(t)]^q \frac{dt}{t}\right\}^\frac{1}{q}& \mbox{if $1 \leq q < \infty$}\\
        \sup_{0<t<\mathcal{H}^{n-1}(\partial\Omega)} \{t^\frac{1}{p} f^{**}(t)\}& \mbox{if $q=\infty$}
    \end{cases}
\end{equation*}
for a measurable function $f$ on $\partial \Omega$. Here, $f^{**}(t) = \frac{1}{t} \int_{0}^t f^*(s)ds$ for $t>0$, where $f^*$ stands for the decreasing rearrangement of $f$. The Lorentz-Zygmund space $L^{p,q}\log L(\partial \Omega)$, with $p\geq 1$, is equipped with the norm
\begin{equation*}
    \| f \|_{L^{p,q}\log L(\partial \Omega)} = 
    \begin{cases}
        \left \{ \int_0^{\mathcal{H}^{n-1}(\partial\Omega)} [t^\frac{1}{p} \log(1+\frac{1}{t}) f^{**}(t)]^q \frac{dt}{t}\right\}^\frac{1}{q}& \mbox{if $1 \leq q < \infty$}\\
        \sup_{0<t<\mathcal{H}^{n-1}(\partial\Omega)} \{t^\frac{1}{p} \log(1+\frac{1}{t}) f^{**}(t)\}& \mbox{if $q=\infty$}.
    \end{cases}
\end{equation*}

Moreover, we set 
\begin{equation}\label{KB}
    \mathcal{K}_{\Omega}(r)=\sup_{E\subset B_r(x) ,  x\in\partial \Omega} \frac{\int_{\partial \Omega\cap E}|\mathcal{B}| \,d\mathcal{H}^{n-1}}{cap(B_r(x),E)},
\end{equation}
and
\begin{equation}\label{integrabilityB}
    \Psi_{\Omega}(r)=\sup_{x \in \partial \Omega} \| \mathcal{B}\|_{\tilde X(\partial \Omega \cap B_r(x))}
\end{equation}
where $\mathcal{B}$ stands for the weak second fundamental form on $\partial \Omega$, $|\mathcal{B}|$ its norm and
\begin{equation*}
    \tilde X=
    \begin{cases}
        L^{n-1,\infty} & \mbox{if $n \geq 3$,}\\
        L^{1,\infty}\log{L} & \mbox{if $n =2$.}
    \end{cases}
\end{equation*}

\begin{remark}\label{antony2}
    Notice that if $\partial \Omega \in W^2 X$, where $X$ is defined as in \eqref{X}, then the following condition holds:
    \begin{equation}\label{cond_B}
        \lim_{r \rightarrow 0^+} \left (\sup_{x \in \partial \Omega} \| \mathcal{B} \|_{\tilde X(\partial \Omega \cap B_r(x))}\right) < c,
    \end{equation}
    for a suitable positive constant $c=c(n,N,p,L_\Omega,d_\Omega)$. Indeed,
    \begin{equation*}
        L^{n-1,1} \subset L^{n-1} \subset L^{n-1,\infty},
    \end{equation*}
    and
    \begin{equation*}
        L \log L \subset L^{1,\infty}\log L.
    \end{equation*}
    In particular, condition \eqref{cond_B} is fulfilled if $\partial \Omega \in C^2$.
\end{remark}

Now we state two lemmas which will be useful to us later, see \cite[Section 6]{ACCFM} and \cite{Cma,Ant}. 
\begin{lemma}\label{antoninho2}
    Let $\Omega$ be a bounded Lipschitz domain in $\R^n$ with Lipschitz characteristic $\mathcal{L}_\Omega =(L_\Omega, R_\Omega)$. Assume that $\rho$ is a nonnegative function on $\partial \Omega$ such that  $\rho\in L^1(\partial \Omega)$. Then,
    \begin{equation}
        \mathcal{K}_{\Omega,\rho}(r) \leq c \Psi_{\Omega,\rho}(r),
    \end{equation}
    for some constant $c=c(n,L_\Omega)$, for every $r \in (0,R_\Omega]$.
\end{lemma}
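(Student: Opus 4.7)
\medskip

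\noindent\emph{Proof sketch.} The plan is to decouple the numerator from the denominator in \eqref{defK} via a Hölder inequality in Lorentz(-Zygmund) spaces, and then to absorb the remaining $\chi_E$ factor into the relative capacity by means of an isocapacitary (trace) inequality on $\partial\Omega$. Throughout, we localize via the Lipschitz chart around a boundary point $x_0\in\partial\Omega$, which is where the dependence on $L_\Omega$ enters. The bilipschitz flattening converts surface integrals over $\partial\Omega\cap B_r(x_0)$ into integrals over a graph of an $L_\Omega$-Lipschitz function, with distortion controlled solely by $L_\Omega$.

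Fix $x_0\in\partial\Omega$ and a measurable $E\subset B_r(x_0)$. For $n\geq 3$, Hölder's inequality in the Lorentz scale gives
\begin{equation*}
\int_{\partial\Omega\cap E}\rho\, d\mathcal{H}^{n-1}
\;\leq\; c\,\|\rho\|_{L^{n-1,\infty}(\partial\Omega\cap B_r(x_0))}\,
\|\chi_E\|_{L^{\frac{n-1}{n-2},1}(\partial\Omega\cap B_r(x_0))}.
\end{equation*}
A direct computation with the decreasing rearrangement of a characteristic function yields
$\|\chi_E\|_{L^{\frac{n-1}{n-2},1}(\partial\Omega)}\leq c_n\,\mathcal{H}^{n-1}(\partial\Omega\cap E)^{\frac{n-2}{n-1}}$. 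On the other hand, any admissible test function $u\in C_c^\infty(B_r(x_0))$ with $u\geq 1$ on $E$ satisfies, by the Sobolev trace embedding $W^{1,2}\hookrightarrow L^{\frac{2(n-1)}{n-2}}(\partial\Omega)$ with constant depending only on $n$ and $L_\Omega$,
\begin{equation*}
\mathcal{H}^{n-1}(\partial\Omega\cap E)
\;\leq\;\int_{\partial\Omega}|u|^{\frac{2(n-1)}{n-2}}\,d\mathcal{H}^{n-1}
\;\leq\; C(n,L_\Omega)\!\left(\int_{\mathbb{R}^n}|\nabla u|^2\,dx\right)^{\!\frac{n-1}{n-2}}\!.
\end{equation*}
Taking the infimum over all admissible $u$ produces the isocapacitary bound
$\mathcal{H}^{n-1}(\partial\Omega\cap E)^{\frac{n-2}{n-1}}\leq C(n,L_\Omega)\,\mathrm{cap}(B_r(x_0),E)$.
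Combining the two estimates and passing to the supremum over $E$ and $x_0$ gives
$\mathcal{K}_{\Omega,\rho}(r)\leq c\,\Psi_{\Omega,\rho}(r)$ when $n\geq 3$.

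For $n=2$ the same strategy applies, but at the endpoint: the Hölder pairing is now between $L^{1,\infty}\!\log L$ and its Lorentz-Zygmund predual, which yields a factor of the form $\log(1+1/\mathcal{H}^{1}(\partial\Omega\cap E))^{-1}$ instead of a power, and the role of the Sobolev trace embedding is played by the two-dimensional trace embedding into an exponential-Orlicz class. The resulting logarithmic isocapacitary inequality is precisely the one needed to cancel the logarithmic factor produced by the Lorentz-Zygmund Hölder inequality, and the same combination closes the argument. The main delicate point, and the one I would treat last, is exactly this matching of endpoints in dimension two: ensuring that the particular Lorentz-Zygmund norm defining $\Psi_{\Omega,\rho}$ in \eqref{integrability} is the one paired by Hölder with the logarithmic capacity estimate, so that no extraneous $\log\log$ factor appears. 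For $n\geq 3$ the argument is a clean combination of Lorentz Hölder and the standard Sobolev trace inequality, with the constants tracked through the bilipschitz flattening to depend only on $n$ and $L_\Omega$.
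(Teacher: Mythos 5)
The paper does not prove this lemma itself; it is stated with a citation to \cite[Section 6]{ACCFM} and \cite{Cma,Ant}, and the reader is referred there. Your sketch correctly reproduces the approach of those references: a Lorentz (resp.\ Lorentz--Zygmund) H\"older inequality to decouple $\rho$ from $\chi_E$ in the numerator of \eqref{defK}, a rearrangement computation giving $\|\chi_E\|_{L^{\frac{n-1}{n-2},1}}\leq c_n\,\mathcal{H}^{n-1}(\partial\Omega\cap E)^{\frac{n-2}{n-1}}$, and the isocapacitary bound obtained from the trace embedding $W^{1,2}_0(B_r)\hookrightarrow L^{\frac{2(n-1)}{n-2}}(\partial\Omega\cap B_r)$ for $n\geq 3$ (resp.\ an exponential-Orlicz trace and the associated logarithmic capacity inequality for $n=2$), with constants controlled only by $n$ and $L_\Omega$ through the bilipschitz flattening of the Lipschitz chart. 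This is essentially the same argument as in the cited sources, not an alternative route, and your flagging of the $n=2$ endpoint matching is precisely the delicate point of the reference proof as well.
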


\begin{lemma}\label{antoninho}
    Let $\Omega$ be a bounded Lipschitz domain in $\R^n$, with Lipschitz characteristic $\mathcal{L}_\Omega =(L_\Omega, R_\Omega).$ Assume that $\rho$ is a nonnegative function on $\partial \Omega$ such that  $\rho\in L^1(\partial \Omega)$. Then, 
    \begin{equation}\label{antony}
        \int_{\partial \Omega \cap B_r(x_0)}v^2\rho \,d\mathcal{H}^{n-1} \leq 32(1+L_\Omega)^4K_{\Omega,\rho}(r)\int_{\Omega\cap B_r(x_0)}|\nabla v|^2 \,dx
    \end{equation}
    for every $x_0\in \partial \Omega$, for every $r\in (0,R_\Omega]$, and for every $v \in W^{1,2}_0(B_r(x_0))$.
\end{lemma}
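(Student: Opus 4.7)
The strategy is a Maz'ya-type capacitary/level-set argument combined with an even reflection across $\partial\Omega$, in order to reduce the gradient integral from the whole ball $B_r(x_0)$ to the slice $\Omega\cap B_r(x_0)$ only.

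The first step is to apply the layer-cake formula,
\[
\int_{\partial\Omega \cap B_r(x_0)} v^2 \rho \, d\mathcal{H}^{n-1} = \int_0^\infty \left(\int_{\partial\Omega \cap B_r(x_0) \cap \{v^2 > t\}} \rho \, d\mathcal{H}^{n-1}\right) dt,
\]
and, for each $t>0$, to estimate the inner integral by means of the capacitary definition \eqref{defK} of $\mathcal{K}_{\Omega,\rho}(r)$. This requires producing a subset $\tilde F_t\subset B_r(x_0)$ which covers the trace level set $\partial\Omega \cap B_r(x_0)\cap\{v^2>t\}$ and whose relative capacity $\mathrm{cap}(B_r(x_0),\tilde F_t)$ can be controlled by $\|\nabla v\|_{L^2(\Omega\cap B_r(x_0))}$ alone.

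To this end, the next step is to construct a reflected extension $\tilde v\in W^{1,2}_0(B_r(x_0))$ of $v|_{\Omega\cap B_r(x_0)}$. Locally near $x_0$ the boundary is the graph $\{x_n=\psi(x')\}$ of an $L_\Omega$-Lipschitz function $\psi$; the map $T(x',x_n)=(x',x_n-\psi(x'))$ straightens $\partial\Omega$ and is bilipschitz, with both Lipschitz constants bounded by $1+L_\Omega$ and unit Jacobian. Reflecting $v\circ T^{-1}$ across the flat hyperplane $\{y_n=0\}$, pulling back by $T$, and applying a cutoff should yield $\tilde v$ satisfying $\tilde v=v$ on $\Omega\cap B_r(x_0)$, $\tilde v\in W^{1,2}_0(B_r(x_0))$, and
\[
\int_{B_r(x_0)} |\nabla \tilde v|^2\,dx \leq 8(1+L_\Omega)^4 \int_{\Omega\cap B_r(x_0)} |\nabla v|^2\,dx.
\]
Choosing $\tilde F_t=\{\tilde v^2>t\}$, which contains the boundary target since $\tilde v=v$ on $\partial\Omega$, the truncation $\phi_t=\min(\tilde v^2/t,1)\in W^{1,2}_0(B_r(x_0))$ is admissible for $\mathrm{cap}(B_r(x_0),\tilde F_t)$ and yields
\[
\mathrm{cap}(B_r(x_0),\tilde F_t) \leq \tfrac{1}{t^2}\int_{B_r(x_0)\cap\{\tilde v^2<t\}} |\nabla \tilde v^2|^2\,dx.
\]
Integrating this in $t$, applying Fubini, and using $|\nabla \tilde v^2|^2=4\tilde v^2|\nabla \tilde v|^2$ gives $\int_0^\infty \mathrm{cap}(B_r(x_0),\tilde F_t)\,dt\leq 4\int_{B_r(x_0)}|\nabla \tilde v|^2\,dx$, which combined with the extension bound above produces the desired constant $4\cdot 8(1+L_\Omega)^4=32(1+L_\Omega)^4$.

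\noindent\textbf{Main obstacle.} The delicate point is the reflection step: producing $\tilde v$ that simultaneously (a) agrees with $v$ on $\Omega\cap B_r(x_0)$ so that the trace on $\partial\Omega$ is preserved, (b) has compact support in $B_r(x_0)$ so that the relative capacity is well-defined, and (c) has Dirichlet energy controlled by $(1+L_\Omega)^4$ times that of $v$ on $\Omega\cap B_r(x_0)$. The careful bookkeeping of the Jacobians of $T$ and $T^{-1}$, together with the treatment of portions of $\partial B_r(x_0)$ not adjacent to $\partial\Omega$ (where the naive reflection of $v$ need not vanish and a cutoff is required), is precisely where the $(1+L_\Omega)^4$ factor in the final constant originates.
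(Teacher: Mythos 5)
Your plan — layer‑cake decomposition of $\int_{\partial\Omega\cap B_r}v^2\rho$, Maz'ya's capacitary estimate for the level sets, test function $\phi_t=\min(\tilde v^2/t,1)$, Fubini, and the identity $|\nabla\tilde v^2|^2=4\tilde v^2|\nabla\tilde v|^2$ producing the factor $4$ — is the correct high‑level strategy for this class of isocapacitary trace inequalities, and the final arithmetic $4\cdot 8(1+L_\Omega)^4=32(1+L_\Omega)^4$ is consistent with the claimed constant. The genuine issue is the extension step, which you flag as the main obstacle but do not actually resolve, and the way you describe the fix is not consistent as stated.

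Concretely: writing the boundary locally as the graph $x_n=\psi(x')$ over $x_0$, the graph reflection is $R_\Omega(x',x_n)=(x',2\psi(x')-x_n)$, and the reflected function $\tilde v=v\circ R_\Omega$ on $B_r(x_0)\setminus\Omega$ does \emph{not} belong to $W^{1,2}_0(B_r(x_0))$. Indeed, writing $a=\psi(x')-\psi(x_0')$ and $u=x_n-\psi(x')$, one has $|x-x_0|^2=|x'-x_0'|^2+(a+u)^2$ while $|R_\Omega(x)-x_0|^2=|x'-x_0'|^2+(a-u)^2$; whenever $a<0$ (i.e.\ $\psi(x')<\psi(x_0')$) and $u>0$, the reflection moves $x$ \emph{farther} from $x_0$, so $R_\Omega(\Omega\cap B_r(x_0))\not\subset B_r(x_0)$ and hence $\tilde v$ has nonzero trace on part of $\partial B_r(x_0)$ even though $v\in W^{1,2}_0(B_r(x_0))$. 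You then invoke a cutoff, but requiring simultaneously $\tilde v=v$ on $\Omega\cap B_r(x_0)$ (so the trace on $\partial\Omega$ is unchanged), $\tilde v\in W^{1,2}_0(B_r(x_0))$, and an energy bound is inconsistent with a cutoff supported in $B_r(x_0)$: any cutoff that equals $1$ on all of $\partial\Omega\cap B_r(x_0)$ and $0$ on $\partial B_r(x_0)$ has gradient blowing up near $\partial\Omega\cap\partial B_r(x_0)$, and the resulting error term $\int|\tilde v|^2|\nabla\chi|^2$ is not controlled by $\int_{\Omega\cap B_r(x_0)}|\nabla v|^2$ without a separate Poincar\'e‑type argument (using that $v$ vanishes on $\partial B_r\cap\Omega$), which you neither state nor carry out, and which would affect the numerical constant. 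If instead the reflection is allowed to land in a larger ball $B_{Cr}(x_0)$, the test function becomes admissible only for $\mathrm{cap}(B_{Cr},\cdot)\le\mathrm{cap}(B_r,\cdot)$, which is the wrong direction for the desired inequality. So the extension with the stated constant is a real gap in the proposal, not merely a bookkeeping detail, and it is precisely the content that the cited references must supply.

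A further (minor) imprecision: "$\tilde v=v$ on $\partial\Omega$" and "$\tilde F_t\supset\partial\Omega\cap B_r\cap\{v^2>t\}$" must be interpreted via quasi-continuous representatives/traces, since $v$ is only $W^{1,2}$; worth making explicit since you are comparing a level set of $\tilde v$ in $B_r$ with a trace level set on $\partial\Omega$.
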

We conclude this section with the following result (see \cite[Lemma 3.5]{BaCiDiMa}):

\begin{lemma}\label{algebra}
        Let $N \geq 2$, $0 \leq \theta \leq \frac{1}{2}$, $\sigma \in \mathbb{R}$ such that $\theta + \sigma \geq 1$. Assume that the vectors $\omega ^ \beta \in \mathbb{R}^n$ and the matrices $H^\beta \in \mathbb{R}^{n \times n}_{sym}$, with $\beta = 1, \dots, N$, satisfy the following:
    \begin{equation}\label{constr_omega}
        \sum_{\beta = 1}^N {|\omega ^\beta|}^2 \leq 1,
    \end{equation}
    \begin{equation}\label{constr_H}
        \sum_{\beta = 1}^N {|H ^\beta|}^2 \leq 1.
    \end{equation}
    Let,
    \begin{equation*}
        J = \left |\sum_{\beta = 1}^N H^\beta \omega ^\beta \right|^2 , \quad J_0= \sum_{\beta = 1}^N  \left \langle\omega ^\beta , \sum_{l=1}^N H^l \omega^l \right \rangle^2 , \quad J_1 = \sum_{\beta=1}^N {|H^\beta|}^2.
    \end{equation*}
    Then
    \begin{equation*}
        J - \theta J_0 - \sigma J_1 \leq 
         \begin{cases}
        0 & \mbox{if $\theta \in [0,\frac{1}{3}]$,}\\
        \max  \left \{ 0,\frac{(\theta+1)^2}{8\theta}-\sigma \right\} & \mbox{if $\theta \in (\frac{1}{3},\frac{1}{2}]$.}\\
         \end{cases}
    \end{equation*}
\end{lemma}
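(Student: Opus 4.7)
The plan is to recast the algebraic inequality as a constrained quadratic maximization and then reduce it to a one-variable optimization via spectral methods. Setting $A:=[\omega^{1}\,|\,\cdots\,|\,\omega^{N}]\in\R^{n\times N}$ and $v:=\sum_{\beta}H^{\beta}\omega^{\beta}\in\R^{n}$, one has $J=|v|^{2}$, $J_{0}=|A^{T}v|^{2}$, and hence $J-\theta J_{0}=\langle v,(I-\theta AA^{T})v\rangle$. The first step is to describe which $v$ are admissible: for fixed $\omega^{\beta}$'s and target $v$, I would minimize $\sum_{\beta}|H^{\beta}|^{2}$ over symmetric $H^{\beta}$ with $\sum_{\beta}H^{\beta}\omega^{\beta}=v$. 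Lagrange multipliers, with the symmetry enforced by symmetrizing the optimality relation, give the explicit minimizer $H^{\beta}=\tfrac{1}{4}\bigl(\mu(\omega^{\beta})^{T}+\omega^{\beta}\mu^{T}\bigr)$ with $\mu=4M^{-1}v$, where $M:=aI+AA^{T}$ and $a:=\sum_{\beta}|\omega^{\beta}|^{2}\leq 1$. A short computation then yields the minimum value $2\langle v,M^{-1}v\rangle$, so the constraint $\sum|H^{\beta}|^{2}\leq J_{1}$ is equivalent to $v$ lying in the ellipsoid $\{v:\langle v,M^{-1}v\rangle\leq J_{1}/2\}$.

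The inequality is thereby equivalent to the generalized eigenvalue maximization
\[
\sup\bigl\{\langle v,(I-\theta AA^{T})v\rangle:\langle v,M^{-1}v\rangle\leq J_{1}/2\bigr\}=\tfrac{J_{1}}{2}\,\lambda_{\max}\bigl(M(I-\theta AA^{T})\bigr).
\]
Since both $M$ and $I-\theta AA^{T}$ are polynomials in $AA^{T}$, they are simultaneously diagonalizable, and the eigenvalues of their product are $g(\mu):=(a+\mu)(1-\theta\mu)$ for $\mu$ in the spectrum of $AA^{T}$, contained in $[0,a]$ because $\mathrm{tr}\,AA^{T}=a$. The whole lemma therefore reduces to bounding $g$ on $[0,a]$ and then optimizing over $a\in[0,1]$ and $J_{1}\in[0,1]$.

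The main obstacle is the case analysis that recovers the two regimes of the statement. The interior critical point $\mu^{\star}=(1-\theta a)/(2\theta)$ lies in $[0,a]$ iff $3\theta a\geq 1$. When $\theta\in[0,1/3]$ this never occurs for $a\leq 1$, $g$ is monotone on $[0,a]$, and the boundary maximum $g(a)=2a(1-\theta a)$ optimized over $a$ yields $J-\theta J_{0}\leq(1-\theta)J_{1}$; together with the hypothesis $\sigma+\theta\geq 1$ this gives $J-\theta J_{0}-\sigma J_{1}\leq 0$. When $\theta\in(1/3,1/2]$ the interior point becomes admissible and dominates at $a=1$, producing $g(\mu^{\star})=(1+\theta)^{2}/(4\theta)$ and the sharper bound $J-\theta J_{0}\leq(1+\theta)^{2}J_{1}/(8\theta)$; using $J_{1}\leq 1$ then gives $J-\theta J_{0}-\sigma J_{1}\leq\max\{0,(1+\theta)^{2}/(8\theta)-\sigma\}$. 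The two regimes agree continuously at $\theta=1/3$, where both formulas equal $1-\theta=2/3$, which confirms that the piecewise bound is globally valid on $[0,1/2]$.
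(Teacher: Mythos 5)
The paper does not prove this lemma; it simply cites it as Lemma 3.5 of Balci--Cianchi--Diening--Maz'ya \cite{BaCiDiMa}, so there is no in-paper proof to compare yours against. Your argument is nonetheless correct and self-contained, and I verified the computations. The Lagrange-multiplier step giving the minimizing symmetric $H^{\beta}=\tfrac14\bigl(\mu(\omega^{\beta})^{T}+\omega^{\beta}\mu^{T}\bigr)$ with $\mu=4M^{-1}v$ is right, and the minimum of $\sum_\beta|H^\beta|^2$ does come out to $2\langle v,M^{-1}v\rangle$ with $M=aI+AA^{T}$; the only degenerate cases are $a=0$ (which forces $v=0$) and $v=0$ in general, both of which make the inequality trivial. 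The reduction to the generalized Rayleigh quotient $\langle v,Pv\rangle\le\tfrac12\lambda_{\max}(MP)\,\phi(v)$, with $P=I-\theta AA^{T}$ and $\phi(v)=2\langle v,M^{-1}v\rangle$, is justified because $M$ and $P$ are both polynomials in $AA^{T}$ and hence simultaneously diagonalizable; the spectrum of $AA^{T}$ lies in $[0,a]$ since it is PSD with trace $a$; and for $\theta\le\tfrac12$, $a\le1$ all eigenvalues $(a+\mu)(1-\theta\mu)$ of $MP$ are strictly positive, which is needed to pass from $\phi(v)\le J_1$ to $\tfrac12\lambda_{\max}(MP)\phi(v)\le\tfrac12\lambda_{\max}(MP)J_1$. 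The one-variable analysis of $g(\mu)=(a+\mu)(1-\theta\mu)$ on $[0,a]$ and the subsequent optimization over $a\in[0,1]$ reproduce both regimes exactly, including continuity at $\theta=1/3$ as you checked, and make transparent that $(1+\theta)^2/(8\theta)$ is the interior critical value of $g$ at $a=1$ while $\theta=1/3$ is precisely where that critical point enters the admissible interval. I cannot compare against the proof in \cite{BaCiDiMa} from the present source, but your route via convex duality and a commuting-operator Rayleigh quotient is a legitimate and rather clean way to obtain the sharp constant.
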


\section{Second order estimates}\label{SOE}

A key tool of our approach relies on global integral inequalities for smooth functions and smooth domains:
\begin{theorem}\label{stimaC^2}
    Let $\Omega$ be a bounded open set in $\R^n$, with $\partial \Omega \in C^2$. If $1<p< \frac{3}{2}$, let us assume $0\leq \alpha <h(p)$, with $h(p)$ as in \eqref{h_p}, and if $p\geq \frac{3}{2}$, let  $\alpha\in [0,1)$. There exists a constant $\overline c=\overline c(n,N,p,\alpha,L_\Omega)$ such that, if
    \begin{equation}\label{condKr}
         K_\Omega (r) \leq K(r), \quad \forall{} r \in (0,1),
    \end{equation}
    for some $K$ satisfying
    \begin{equation}\label{condizionesuk}
        \lim_{r\rightarrow 0^+} K (r) <\overline c,
    \end{equation}
    then
    \begin{equation}\label{abcdef}
    \begin{split}
        &\int_\Omega(\varepsilon +|D\boldsymbol{u}|^2)^{\frac{p-2-\alpha}{2}} \|D^2\boldsymbol{u}\|^2 \, dx \\ & \leq \mathcal{C}\left(\int_\Omega{(\varepsilon +|D\boldsymbol{u}|^2)^{\frac{p-\alpha}{2}}} \, dx\right.  \\&  \qquad\left.+\sum_{i=1}^n\int_\Omega \langle \operatorname{\bf div}( (\varepsilon+|D{\boldsymbol u}|^2)^{\frac{p-2}{2}}D{\boldsymbol u}), \partial _{x_i}\left(\frac{\boldsymbol{u}_i}{(\varepsilon+|D{\boldsymbol u}|^2)^{\frac{\alpha}{2}}}\right)\rangle\,dx\right),
    \end{split}
\end{equation}
if condition \eqref{Neu_cond} is in force.\\
On the other hand, if condition \eqref{Dir_cond} is assigned we obtain:
\begin{equation}\label{abcdefDir}
    \begin{split}
        &\int_\Omega(\varepsilon +|D\boldsymbol{u}|^2)^{\frac{p-2-\alpha}{2}} \|D^2\boldsymbol{u}\|^2 \, dx \\ & \leq \mathcal{C}\left(\int_\Omega{(\varepsilon +|D\boldsymbol{u}|^2)^{\frac{p-\alpha}{2}}} \, dx\right.  \\&  \qquad\left.+\sum_{i=1}^n\int_\Omega \langle \operatorname{\bf div}( (\varepsilon+|D{\boldsymbol u}|^2)^{\frac{p-2}{2}}D{\boldsymbol u}), \partial _{x_i}\left(\frac{\boldsymbol{u}_i}{(\varepsilon+|D{\boldsymbol u}|^2)^{\frac{\alpha}{2}}}\right)\rangle\,dx\right.\\
        &\qquad\left. +\int_\Omega |\operatorname{\bf div}( (\varepsilon+|D{\boldsymbol u}|^2)^{\frac{p-2}{2}}D{\boldsymbol u})| |D{\boldsymbol u}|^{1-\alpha} \,dx \right),
    \end{split}
\end{equation}
for every vector field $\boldsymbol{u}\in C^3(\Omega)\cap C^2(\overline \Omega)$ and for some positive constant $\mathcal{C}(n,N,p,\alpha,L_\Omega,d_\Omega,K)$, where $d_\Omega$ is the diameter of $\Omega$. 

In particular, if $\Omega$ is convex, inequalities \eqref{abcdef} and \eqref{abcdefDir} hold with the constant $\mathcal{C}$ only depending on $n,N,p,\alpha,L_\Omega,d_\Omega$.
\end{theorem}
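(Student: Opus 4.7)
The plan is to derive \eqref{abcdef} and \eqref{abcdefDir} from a single integral identity, obtained by integrating by parts the divergence of the regularized stress field, and to combine this identity with the algebraic inequality of Lemma \ref{algebra} and the capacity/trace estimates of Lemmas \ref{antoninho2} and \ref{antoninho}. Fix a smooth $\boldsymbol u\in C^3(\Omega)\cap C^2(\overline\Omega)$ and set $\boldsymbol F:=(\varepsilon+|D\boldsymbol u|^2)^{(p-2)/2}D\boldsymbol u$. First I would start from the identity, valid for each $i=1,\dots,n$ after integrating by parts in the $x_i$ direction,
\[
\int_\Omega\partial_{x_i}\boldsymbol F:\partial_{x_i}\!\Bigl(\frac{D\boldsymbol u}{(\varepsilon+|D\boldsymbol u|^2)^{\alpha/2}}\Bigr)\,dx
= -\int_\Omega\Bigl\langle\operatorname{\bf div}\boldsymbol F,\partial_{x_i}\!\Bigl(\tfrac{\boldsymbol u_i}{(\varepsilon+|D\boldsymbol u|^2)^{\alpha/2}}\Bigr)\Bigr\rangle\,dx + \mathcal R_i^{\partial\Omega},
\]
where $\mathcal R_i^{\partial\Omega}$ is a boundary remainder. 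Summed over $i$, the first term on the right recovers the divergence contribution in \eqref{abcdef}, so the task reduces to extracting a bound on $\int_\Omega(\varepsilon+|D\boldsymbol u|^2)^{(p-2-\alpha)/2}\|D^2\boldsymbol u\|^2\,dx$ from the expansion of the left-hand side and to controlling $\sum_i\mathcal R_i^{\partial\Omega}$.

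Expanding $\partial_{x_i}\boldsymbol F$ and the derivative of the normalised factor via the product and chain rules yields a non-negative principal contribution proportional to $(\varepsilon+|D\boldsymbol u|^2)^{(p-2-\alpha)/2}\|D^2\boldsymbol u\|^2$, plus cross terms of the form $(\varepsilon+|D\boldsymbol u|^2)^{(p-4-\alpha)/2}\,Q(D\boldsymbol u,D^2\boldsymbol u)$, where $Q$ involves $|D^2\boldsymbol u\,D\boldsymbol u|^2$ and the scalar $D\boldsymbol u:D^2\boldsymbol u\,D\boldsymbol u$. These cross terms are precisely the ones handled by Lemma \ref{algebra}: normalising $\omega^\beta=\nabla u^\beta/|D\boldsymbol u|$ and letting $H^\beta$ be the Hessian of $u^\beta$ divided by $\|D^2\boldsymbol u\|$, with parameters $\theta,\sigma$ given as explicit rational functions of $p$ and $\alpha$, the condition that the principal term dominates the cross terms reduces, for $1<p<\tfrac32$, to $\alpha<h(p)$, and to $\alpha<1$ for $p\geq \tfrac32$; this is the source of the threshold $h(p)$ in the statement. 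Combining these pointwise bounds with \eqref{eq:FraCo} and integrating gives
\[
\int_\Omega(\varepsilon+|D\boldsymbol u|^2)^{(p-2-\alpha)/2}\|D^2\boldsymbol u\|^2\,dx\leq C\,\Bigl(\text{RHS of }\eqref{abcdef}-\sum_i\mathcal R_i^{\partial\Omega}\Bigr).
\]

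It remains to dispose of the boundary remainder. A computation in local boundary charts identifies $\sum_i\mathcal R_i^{\partial\Omega}$ as an integral on $\partial\Omega$ whose leading part is controlled by the second fundamental form $\mathcal B$. Under \eqref{Neu_cond}, using $\partial\boldsymbol u/\partial\boldsymbol\nu=0$, one obtains
\[
\Bigl|\sum_i\mathcal R_i^{\partial\Omega}\Bigr|\leq C\int_{\partial\Omega}(\varepsilon+|D\boldsymbol u|^2)^{(p-\alpha)/2}|\mathcal B|\,d\mathcal H^{n-1}.
\]
Applying Lemmas \ref{antoninho2} and \ref{antoninho} with $\rho=|\mathcal B|$ to a finite partition of unity on $\partial\Omega$ at scale $r$, with $v$ a truncation of $(\varepsilon+|D\boldsymbol u|^2)^{(p-\alpha)/4}$, bounds the right-hand side by $C\,K(r)\!\int_\Omega(\varepsilon+|D\boldsymbol u|^2)^{(p-2-\alpha)/2}\|D^2\boldsymbol u\|^2\,dx$ plus lower-order terms; by \eqref{condizionesuk}, choosing $r$ small makes $CK(r)$ strictly smaller than the positive coefficient of the principal term, so this piece is absorbed into the left-hand side, closing \eqref{abcdef}. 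Under \eqref{Dir_cond} the tangential derivatives of $\boldsymbol u$ on $\partial\Omega$ vanish but $\partial_{\boldsymbol\nu}\boldsymbol u$ does not; an extra integration by parts, using the equation to transfer the normal derivative into the bulk, generates the additional term $\int_\Omega|\operatorname{\bf div}\boldsymbol F|\,|D\boldsymbol u|^{1-\alpha}\,dx$ appearing in \eqref{abcdefDir}, after which the absorption proceeds as above. When $\Omega$ is convex, $\mathcal B$ contributes with a sign that lets $\mathcal R_{\partial\Omega}$ be discarded, so $\mathcal C$ is independent of $K$.

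The main obstacle I expect is the sharp algebraic handling of the cross terms in the interior: extracting the precise threshold $h(p)$ requires an optimisation over $(\theta,\sigma)$ in Lemma \ref{algebra} and is responsible for the case distinction in the statement. A subtler but more technical difficulty lies in the Dirichlet boundary analysis, where the non-vanishing normal derivative forces the extra bulk term $\int_\Omega|\operatorname{\bf div}\boldsymbol F|\,|D\boldsymbol u|^{1-\alpha}\,dx$ and makes the final absorption more delicate than under Neumann data.
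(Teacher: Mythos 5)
Your proposal follows essentially the same route as the paper's proof: differentiate the regularized system in $x_i$, test against $\boldsymbol u_i\varphi^2(\varepsilon+|D\boldsymbol u|^2)^{-\alpha/2}$, apply Lemma \ref{algebra} with $\theta=\alpha(p-2)/(p-2-\alpha)$ to obtain a coercivity constant $K_N(p,\alpha)>0$ (whence the thresholds $\alpha<h(p)$ for $p<\tfrac32$ and $\alpha<1$ otherwise), express the boundary remainder through Grisvard's identity as a term controlled by $|\mathcal B|$, and absorb it via Lemmas \ref{antoninho2}--\ref{antoninho} together with a covering and partition of unity. The one inaccuracy is the stated origin of the extra Dirichlet bulk term $\int_\Omega |\operatorname{\bf div}((\varepsilon+|D\boldsymbol u|^2)^{\frac{p-2}{2}}D\boldsymbol u)|\,|D\boldsymbol u|^{1-\alpha}\,dx$: it is not produced by transferring a normal derivative into the bulk, but by the cutoff cross term involving $\varphi\nabla\varphi$, which in the Neumann case is eliminated by integrating by parts in $x_i$ (with no boundary contribution since $\partial_{\boldsymbol\nu}\boldsymbol u=0$) and then summing the partition of unity, whereas under Dirichlet data this boundary contribution does not vanish, so the cross term must instead be estimated crudely by Young.
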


\begin{proof}
   
    Let $\varepsilon\in (0,1)$. Since $\boldsymbol u\in C^3(\Omega)$, we deduce that 
    \begin{equation}\label{a_1}
    \begin{split}
        &-\langle \partial _{x_i}\left(\operatorname{\bf div}( (\varepsilon+|D{\boldsymbol u}|^2)^{\frac{p-2}{2}}D{\boldsymbol u})\right), \boldsymbol{u}_i\rangle\\ &=-\langle\operatorname{\bf div}( (\varepsilon+|D{\boldsymbol u}|^2)^{\frac{p-2}{2}}D{\boldsymbol u}_i+(p-2)(\varepsilon+|D{\boldsymbol u}|^2)^{\frac{p-4}{2}}(D\boldsymbol u_i:D\boldsymbol u)D\boldsymbol u),\boldsymbol{u}_i\rangle.
        \end{split}
\end{equation}
Let $\varphi \in C_c^\infty(\mathbb{R}^n)$, multiplying \eqref{a_1} by $\varphi ^2/(\varepsilon+|D{\boldsymbol u}|^2)^{\frac{\alpha}{2}}$ and integrating both sides of the previous equality over $\Omega$, yields to:

\begin{equation}\label{a_2}
    \begin{split}
        -&\int_\Omega \langle \partial _{x_i}\left(\operatorname{\bf div}( (\varepsilon+|D{\boldsymbol u}|^2)^{\frac{p-2}{2}}D{\boldsymbol u})\right), \boldsymbol{u}_i\rangle\frac{\varphi ^2}{(\varepsilon+|D{\boldsymbol u}|^2)^{\frac{\alpha}{2}}}\,dx\\ &=-\int_\Omega\langle\operatorname{\bf div}( (\varepsilon+|D{\boldsymbol u}|^2)^{\frac{p-2}{2}}D{\boldsymbol u}_i),\boldsymbol u_i\rangle\frac{\varphi ^2}{(\varepsilon+|D{\boldsymbol u}|^2)^{\frac{\alpha}{2}}}\,dx\\ &\quad-(p-2)\int_\Omega \operatorname{\bf div}((\varepsilon+|D{\boldsymbol u}|^2)^{\frac{p-4}{2}}(D\boldsymbol u_i:D\boldsymbol u)D\boldsymbol u),\boldsymbol{u}_i\rangle\frac{\varphi ^2}{(\varepsilon+|D{\boldsymbol u}|^2)^{\frac{\alpha}{2}}}\,dx.
    \end{split}
\end{equation}

Since $\boldsymbol u \in C^2(\overline \Omega)$, integrating by parts both integrals in the right hand side of \eqref{a_2}, we obtain: 

\begin{equation}\label{a_3}
    \begin{split}
        -&\int_\Omega \langle \partial _{x_i}\left(\operatorname{\bf div}( (\varepsilon+|D{\boldsymbol u}|^2)^{\frac{p-2}{2}}D{\boldsymbol u})\right), \boldsymbol{u}_i\rangle\frac{\varphi ^2}{(\varepsilon+|D{\boldsymbol u}|^2)^{\frac{\alpha}{2}}}\,dx\\ &=\int_\Omega (\varepsilon+|D{\boldsymbol u}|^2)^{\frac{p-2}{2}}\left( D{\boldsymbol u}_i:D\left(\frac{\boldsymbol u_i\varphi ^2}{(\varepsilon+|D{\boldsymbol u}|^2)^{\frac{\alpha}{2}}}\right)\right)\,dx\\
        &\quad+(p-2)\int_\Omega (\varepsilon+|D{\boldsymbol u}|^2)^{\frac{p-4}{2}}(D\boldsymbol u_i:D\boldsymbol u) \left( D\boldsymbol u:D\left(\frac{\boldsymbol{u}_i\varphi ^2}{(\varepsilon+|D{\boldsymbol u}|^2)^{\frac{\alpha}{2}}}\right)\right)\,dx\\
        &\quad-(p-2)\int_{\partial \Omega}(\varepsilon+|D{\boldsymbol u}|^2)^{\frac{p-4-\alpha}{2}}(D\boldsymbol u_i:D\boldsymbol u)\langle (D\boldsymbol{u})^T\boldsymbol{u}_i,\boldsymbol{\nu}\rangle\varphi^2\,d\mathcal{H}^{n-1}\\
        &\quad-\int_{\partial \Omega}(\varepsilon+|D{\boldsymbol u}|^2)^{\frac{p-2-\alpha}{2}}\langle (D\boldsymbol{u}_i)^T\boldsymbol{u}_i,\boldsymbol{\nu}\rangle\varphi^2\,d\mathcal{H}^{n-1},
    \end{split}
\end{equation}
where $\boldsymbol\nu$ stands for the outward unit vector on $\partial \Omega$.

Let us focus on the left hand side of \eqref{a_3}. An integration by parts shows:
\begin{equation*}
    \begin{split}
        -&\int_\Omega \langle \partial _{x_i}\left(\operatorname{\bf div}( (\varepsilon+|D{\boldsymbol u}|^2)^{\frac{p-2}{2}}D{\boldsymbol u})\right), \boldsymbol{u}_i\rangle\frac{\varphi ^2}{(\varepsilon+|D{\boldsymbol u}|^2)^{\frac{\alpha}{2}}}\,dx\\
        &=\int_\Omega \langle \operatorname{\bf div}( (\varepsilon+|D{\boldsymbol u}|^2)^{\frac{p-2}{2}}D{\boldsymbol u}),\partial _{x_i} \left(\frac{ \boldsymbol{u}_i \varphi ^2}{(\varepsilon+|D{\boldsymbol u}|^2)^{\frac{\alpha}{2}}} \right) \rangle\,dx\\
&\quad-\int_{\partial \Omega}\langle \operatorname{\bf div}( (\varepsilon+|D{\boldsymbol u}|^2)^{\frac{p-2}{2}}D{\boldsymbol u}),\boldsymbol{u}_i\rangle \nu_i \frac{\varphi ^2}{(\varepsilon+|D{\boldsymbol u}|^2)^{\frac{\alpha}{2}}}\,d\mathcal{H}^{n-1},
    \end{split}
\end{equation*}
where $\nu_i$ represents the $i$-th component of the unit normal vector $\boldsymbol{\nu}$.

Summing up, using the previous equality, we can rewrite \eqref{a_3} as follows:
\begin{equation}\label{a_intermedio}
    \begin{split}
        &\int_\Omega (\varepsilon+|D{\boldsymbol u}|^2)^{\frac{p-2}{2}}\left( D{\boldsymbol u}_i:D\left(\frac{\boldsymbol u_i\varphi ^2}{(\varepsilon+|D{\boldsymbol u}|^2)^{\frac{\alpha}{2}}}\right)\right)\,dx\\
        &\quad+(p-2)\int_\Omega (\varepsilon+|D{\boldsymbol u}|^2)^{\frac{p-4}{2}}(D\boldsymbol u_i:D\boldsymbol u) \left( D\boldsymbol u:D\left(\frac{\boldsymbol{u}_i\varphi ^2}{(\varepsilon+|D{\boldsymbol u}|^2)^{\frac{\alpha}{2}}}\right)\right)\,dx\\
        &=(p-2)\int_{\partial \Omega}(\varepsilon+|D{\boldsymbol u}|^2)^{\frac{p-4-\alpha}{2}}(D\boldsymbol u_i:D\boldsymbol u)\langle (D\boldsymbol{u})^T\boldsymbol{u}_i,\boldsymbol{\nu}\rangle\varphi^2\,d\mathcal{H}^{n-1}\\
        &\quad+\int_{\partial \Omega}(\varepsilon+|D{\boldsymbol u}|^2)^{\frac{p-2-\alpha}{2}}\langle (D\boldsymbol{u}_i)^T\boldsymbol{u}_i,\boldsymbol{\nu}\rangle\varphi^2\,d\mathcal{H}^{n-1}\\
        &\quad+\int_\Omega \langle \operatorname{\bf div}( (\varepsilon+|D{\boldsymbol u}|^2)^{\frac{p-2}{2}}D{\boldsymbol u}),\partial _{x_i} \left(\frac{ \boldsymbol{u}_i \varphi ^2}{(\varepsilon+|D{\boldsymbol u}|^2)^{\frac{\alpha}{2}}} \right) \rangle\,dx\\
&\quad-\int_{\partial \Omega}\langle \operatorname{\bf div}( (\varepsilon+|D{\boldsymbol u}|^2)^{\frac{p-2}{2}}D{\boldsymbol u}),\boldsymbol{u}_i\rangle \nu_i \frac{\varphi ^2}{(\varepsilon+|D{\boldsymbol u}|^2)^{\frac{\alpha}{2}}}\,d\mathcal{H}^{n-1}.
    \end{split}
\end{equation}

Computations show that:
\begin{equation}\label{D_test}
\begin{split}
    D\left(\frac{\boldsymbol u_i\varphi ^2}{(\varepsilon+|D{\boldsymbol u}|^2)^{\frac{\alpha}{2}}}\right) = &D\boldsymbol u_i \frac{\varphi ^2}{(\varepsilon+|D{\boldsymbol u}|^2)^{\frac{\alpha}{2}}} + 2\varphi \frac{\boldsymbol{u}_{i}(\nabla \varphi)^T}{(\varepsilon+|D{\boldsymbol u}|^2)^{\frac{\alpha}{2}}}\\
    &\quad- \alpha \varphi^2 \frac{\boldsymbol{u}_{i}\left(D^2\boldsymbol u D\boldsymbol u\right)^T}{(\varepsilon+|D{\boldsymbol u}|^2)^{\frac{\alpha+2}{2}}}
\end{split}
\end{equation}

Finally, plugging \eqref{D_test} in \eqref{a_intermedio}, we deduce:

\begin{equation}\label{a_4}
    \begin{split}
        \int_\Omega&\frac{(\varepsilon +|D\boldsymbol{u}|^2)^{\frac{p-2}{2}}}{(\varepsilon +|D\boldsymbol{u}|^{2})^{\frac{\alpha}{2}}} |D\boldsymbol{u}_{i}|^2\varphi^2 \, dx\\
&-\alpha \int_\Omega\frac{(\varepsilon +|D\boldsymbol{u}|^2)^{\frac{p-2}{2}}}{(\varepsilon +|D\boldsymbol{u}|^2)^{\frac{\alpha+2}{2}}}(D\boldsymbol{u}_{i}:\boldsymbol{u}_{i}\left(D^2\boldsymbol u D\boldsymbol u\right)^T)\varphi^2\, dx\\
&+(p-2)\int_\Omega \frac{(\varepsilon +|D\boldsymbol{u}|^2)^{\frac{p-4}{2}}}{(\varepsilon +|D\boldsymbol{u}|^{2})^{\frac{\alpha}{2}}}(D\boldsymbol{u}:D\boldsymbol{u}_{i})^2\varphi^2\, dx\\&-\alpha(p-2) \int_\Omega\frac{(\varepsilon +|D\boldsymbol{u}|^2)^{\frac{p-4}{2}}}{(\varepsilon +|D\boldsymbol{u}|^2)^{\frac{\alpha+2}{2}}}(D\boldsymbol{u}:D\boldsymbol{u}_{i})(D\boldsymbol{u}:\boldsymbol{u}_{i}\left(D^2\boldsymbol u D\boldsymbol u\right)^T)\varphi^2\, dx\\
=&-2\int_\Omega\frac{(\varepsilon +|D\boldsymbol{u}|^2)^{\frac{p-2}{2}}}{(\varepsilon +|D\boldsymbol{u}|^{2})^{\frac{\alpha}{2}}}(D\boldsymbol{u}_{i}: \boldsymbol{u}_{i}(\nabla \varphi)^T) \varphi \  dx\\
&-2(p-2)\int_\Omega\frac{(\varepsilon +|D\boldsymbol{u}|^2)^{\frac{p-4}{2}}}{(\varepsilon +|D\boldsymbol{u}|^{2})^{\frac{\alpha}{2}}}(D\boldsymbol{u}:D\boldsymbol{u}_{i})(D\boldsymbol{u}: \boldsymbol{u}_{i}(\nabla \varphi)^T)\varphi \, dx\\
&+(p-2)\int_{\partial \Omega}(\varepsilon+|D{\boldsymbol u}|^2)^{\frac{p-4-\alpha}{2}}(D\boldsymbol u_i:D\boldsymbol u)\langle (D\boldsymbol{u})^T\boldsymbol{u}_i,\boldsymbol{\nu}\rangle\varphi^2\,d\mathcal{H}^{n-1}\\
&+\int_{\partial \Omega}(\varepsilon+|D{\boldsymbol u}|^2)^{\frac{p-2-\alpha}{2}}\langle (D\boldsymbol{u}_i)^T\boldsymbol{u}_i,\boldsymbol{\nu}\rangle\varphi^2\,d\mathcal{H}^{n-1}\\
&+\int_\Omega \langle \operatorname{\bf div}( (\varepsilon+|D{\boldsymbol u}|^2)^{\frac{p-2}{2}}D{\boldsymbol u}),\partial _{x_i} \left(\frac{ \boldsymbol{u}_i \varphi ^2}{(\varepsilon+|D{\boldsymbol u}|^2)^{\frac{\alpha}{2}}} \right) \rangle\,dx\\
&-\int_{\partial \Omega}\langle \operatorname{\bf div}( (\varepsilon+|D{\boldsymbol u}|^2)^{\frac{p-2}{2}}D{\boldsymbol u}),\boldsymbol{u}_i\rangle \nu_i \frac{\varphi ^2}{(\varepsilon+|D{\boldsymbol u}|^2)^{\frac{\alpha}{2}}}\,d\mathcal{H}^{n-1}=:\mathcal{I}_1+\cdots+\mathcal{I}_6.
    \end{split}
\end{equation}

Now we estimate the left-hand side of \eqref{a_4}. Exploiting a computation found in \cite{M}, we deduce
\begin{eqnarray}\label{terminesinistra}
&&\sum_{i=1}^{n}\left\{ \int_\Omega\frac{(\varepsilon +|D\boldsymbol{u}|^2)^{\frac{p-2}{2}}}{(\varepsilon +|D\boldsymbol{u}|^{2})^{\frac{\alpha}{2}}} |D\boldsymbol{u}_{i}|^2\varphi^2 \, dx\right.
\\\nonumber
&&\quad-\alpha \int_\Omega\frac{(\varepsilon +|D\boldsymbol{u}|^2)^{\frac{p-2}{2}}}{(\varepsilon +|D\boldsymbol{u}|^2)^{\frac{\alpha+2}{2}}}(D\boldsymbol{u}_{i}:\boldsymbol{u}_{i}\left(D^2\boldsymbol u D\boldsymbol u\right)^T)\varphi^2\, dx\\\nonumber
&&\quad+(p-2)\int_\Omega \frac{(\varepsilon +|D\boldsymbol{u}|^2)^{\frac{p-4}{2}}}{(\varepsilon +|D\boldsymbol{u}|^{2})^{\frac{\alpha}{2}}}(D\boldsymbol{u}:D\boldsymbol{u}_{i})^2\varphi^2\, dx\\\nonumber
\\\nonumber
&&\quad\left.-\alpha(p-2) \int_\Omega\frac{(\varepsilon +|D\boldsymbol{u}|^2)^{\frac{p-4}{2}}}{(\varepsilon +|D\boldsymbol{u}|^2)^{\frac{\alpha+2}{2}}}(D\boldsymbol{u}:D\boldsymbol{u}_{i})(D\boldsymbol{u}:\boldsymbol{u}_{i}(D^2\boldsymbol u D\boldsymbol u)^T)\varphi^2\, dx\right\}\\\nonumber
&&=\int_\Omega(\varepsilon +|D\boldsymbol{u}|^2)^{\frac{p-2-\alpha}{2}} \|D^2\boldsymbol{u}\|^2\varphi^2 \, dx\\\nonumber
&&\quad+(p-2-\alpha)\int_\Omega(\varepsilon +|D\boldsymbol{u}|^2)^{\frac{p-4-\alpha}{2}} \sum_{i=1}^n(D\boldsymbol{u}:D\boldsymbol{u}_{i})^2\varphi^2 \, dx\\\nonumber
&&\quad-\alpha(p-2)\int_\Omega(\varepsilon +|D\boldsymbol{u}|^2)^{\frac{p-6-\alpha}{2}}\sum_{k=1}^N\langle D^2\boldsymbol u D\boldsymbol u, \nabla u^k\rangle \langle D^2\boldsymbol u D\boldsymbol u, \nabla u^k\rangle\varphi^2 \, dx. 
\end{eqnarray}
We distinguish two cases:\\
If $1<p\leq 2$, we rewrite the right hand side of \eqref{terminesinistra} as follows
\begin{equation*}
    \int_\Omega(\varepsilon +|D\boldsymbol{u}|^2)^{\frac{p-2-\alpha}{2}} \mathcal{I}\varphi^2 \, dx,
\end{equation*}
where
\begin{equation}\label{I}
\begin{split}
    \mathcal{I}& \vcentcolon = \|D^2\boldsymbol{u}\|^2 + (p-2-\alpha) \frac{\sum_{i=1}^n(D\boldsymbol{u}:D\boldsymbol{u}_{i})^2}{(\varepsilon +|D\boldsymbol{u}|^2)} - \alpha(p-2) \frac{\sum_{k=1}^N\langle D^2\boldsymbol u D\boldsymbol u, \nabla u^k\rangle^2}{(\varepsilon +|D\boldsymbol{u}|^2)^2}\\
    &=\|D^2\boldsymbol{u}\|^2 + (p-2-\alpha) \frac{|D^2\boldsymbol{u}D\boldsymbol{u}|^2}{(\varepsilon +|D\boldsymbol{u}|^2)} - \alpha(p-2) \frac{ |D\boldsymbol{u}(D^2\boldsymbol{u}D\boldsymbol{u})|^2}{(\varepsilon +|D\boldsymbol{u}|^2)^2}
\end{split}
\end{equation}

We claim that:

\begin{equation}\label{K_N}
        \mathcal{I} \geq  K_N(p,\alpha) \|D^2\boldsymbol{u}\|^2,
\end{equation}
where
\begin{equation}\label{defK_n}
    K_N(p,\alpha) = 
    \begin{cases}
        1+\frac{(\alpha(p-2)+p-2-\alpha)^2}{8\alpha(p-2)} & \mbox{if $p \in (1,\frac{3}{2})$, $\alpha \in(\frac{p-2}{3p-5},1)$,}\\
        (p-1)(1-\alpha) & \mbox{if $p \in (1,\frac{3}{2})$, $\alpha \in [0,\frac{p-2}{3p-5}]$,}\\
        (p-1)(1-\alpha) & \mbox{if $p \in [\frac{3}{2},2)$.}
    \end{cases}
\end{equation}

In order to prove the claim, we call for an application of Lemma \ref{algebra}. To this aim, we define
\begin{equation*}
    \omega ^\beta = \frac{\nabla {u}^\beta}{(\varepsilon+|D\boldsymbol{u}|^2)^{\frac{1}{2}}} \in \mathbb{R}^n, \qquad H^\beta = \nabla^2 {u} ^\beta \in \mathbb{R}^{n \times n}_{sym},
\end{equation*}
for $\beta = 1,...,N$, where $\nabla ^2 u^\beta$ stands for the Hessian matrix associated to the $\beta$-th component of $\boldsymbol{u}$.\\
It is easy to see that assumption \eqref{constr_omega} is satisfied. Furthermore, if $|D^2 \boldsymbol{u} (x)| > 1$ at some point $x$, we can normalize $D^2 \boldsymbol{u} (x)$ so that condition \eqref{constr_H} holds.

Computations lead to
\begin{equation*}
        J = \frac{|D^2\boldsymbol{u}D\boldsymbol{u}|^2}{(\varepsilon+|D\boldsymbol{u}|^2)}, \quad J_0= \frac{|D\boldsymbol{u}(D^2\boldsymbol{u}D\boldsymbol{u})|^2}{(\varepsilon+|D\boldsymbol{u}|^2)^2} , \quad J_1 = {\|D^2\boldsymbol{u}\|}^2.
\end{equation*}
In this way we can rewrite \eqref{I} as follows:
\begin{equation*}
    \mathcal{I}=J_1 + (p-2-\alpha) J - \alpha(p-2) J_0
\end{equation*}
Next, set
\begin{equation*}
    \theta=\frac{\alpha(p-2)}{p-2-\alpha}.
\end{equation*}

 Notice that $\theta \in [0,\frac{1}{2})$, for every $\alpha \in [0,1)$. In addition, if $p \in [\frac{3}{2},2]$, then $\theta \in [0,\frac{1}{3})$, for any  $\alpha \in [0,1)$. On the other hand if $p \in (1,\frac{3}{2})$, then $\theta \in [0,\frac{1}{3}]$ if $\alpha \leq \frac{p-2}{3p-5}$ and $\theta \in (\frac{1}{3},\frac{1}{2})$ if $\alpha > \frac{p-2}{3p-5}$.

Consequently, let us choose
\begin{equation*}
    \sigma = 
    \begin{cases}
       \frac{(\theta+1)^2}{8\theta} & \mbox{if $p \in (1,\frac{3}{2})$, $\alpha \in(\frac{p-2}{3p-5},1)$,}\\
       \frac{p-2-\alpha(p-1)}{p-2-\alpha} & \mbox{if $p \in (1,\frac{3}{2})$, $\alpha \in [0,\frac{p-2}{3p-5}]$,}\\
       \frac{p-2-\alpha(p-1)}{p-2-\alpha} & \mbox{if $p \in [\frac{3}{2},2]$.}
        
    \end{cases}
\end{equation*}
In particular, with this choice, $\theta + \sigma \geq 1$. Thus, a direct application of Lemma \ref{algebra} leads to
\begin{equation*}
    J \leq \frac{\alpha(p-2)}{p-2-\alpha} J_0 +\sigma J_1.
\end{equation*}
As a consequence,
\begin{equation*}\label{K_N1}
        \mathcal{I} \geq (1+\sigma(p-2-\alpha))\|D^2\boldsymbol{u}\|^2= K_N(p,\alpha) \|D^2\boldsymbol{u}\|^2,
\end{equation*}
with $K_{N}(p,\alpha)$ as in \eqref{defK_n}.

In particular, by computations, it follows that $K_N(p,\alpha)>0$ if
\begin{equation*}
    \alpha < h(p) \vcentcolon = \frac{(2-p)(p+2\sqrt{2(p-1)}+1)}{(p-3)^2},
\end{equation*}
when $p\in (1,\frac{3}{2})$. We remark that if $p\in (1,\frac 32)$, then $h(p)>\frac{p-2}{3p-5}$. Thus, $h(p)$ is the best bound for $\alpha$.

\vspace{0.5 cm}

On the other hand, if $p>2$, by \eqref{eq:FraCo}, we note that 
\begin{eqnarray}\label{eq:FraCo2}
&&\sum_{k=1}^N\langle D^2\boldsymbol u D\boldsymbol u, \nabla u^k\rangle \langle D^2\boldsymbol u D\boldsymbol u, \nabla u^k\rangle=\sum_{k=1}^N\langle D^2\boldsymbol u D\boldsymbol u, \nabla u^k\rangle^2\\\nonumber
&&\qquad=|D\boldsymbol{u}(D^2\boldsymbol u D\boldsymbol u)|^2\leq |D\boldsymbol{u}|^2|D^2\boldsymbol u D\boldsymbol u|^2,
\end{eqnarray}
where
\begin{equation*}
    |D^2\boldsymbol u D\boldsymbol u|^2=\sum_{i=1}^n(D\boldsymbol{u}:D\boldsymbol{u}_{i})^2.
\end{equation*}
Thus, by \eqref{eq:FraCo2}, the right-hand side of equality \eqref{terminesinistra} becomes:
\begin{equation*}
    \begin{split}
        &\int_\Omega(\varepsilon +|D\boldsymbol{u}|^2)^{\frac{p-2-\alpha}{2}} \|D^2\boldsymbol{u}\|^2\varphi^2 \, dx\\
&\qquad+(p-2-\alpha)\int_\Omega(\varepsilon +|D\boldsymbol{u}|^2)^{\frac{p-4-\alpha}{2}} \sum_{i=1}^n(D\boldsymbol{u}:D\boldsymbol{u}_{i})^2\varphi^2 \, dx\\
&\qquad-\alpha(p-2)\int_\Omega(\varepsilon +|D\boldsymbol{u}|^2)^{\frac{p-6-\alpha}{2}}\sum_{k=1}^N\langle D^2\boldsymbol u D\boldsymbol u, \nabla u^k\rangle \langle D^2\boldsymbol u D\boldsymbol u, \nabla u^k\rangle\varphi^2 \, dx.\\
&\quad\geq \int_\Omega(\varepsilon +|D\boldsymbol{u}|^2)^{\frac{p-2-\alpha}{2}} \|D^2\boldsymbol{u}\|^2\varphi^2 \, dx\\
&\qquad+ (p-2-\alpha-\alpha(p-2)) \int_\Omega(\varepsilon +|D\boldsymbol{u}|^2)^{\frac{p-4-\alpha}{2}} |D^2\boldsymbol u D\boldsymbol u|^2\varphi^2 \, dx
\end{split}
\end{equation*}
If $p-2-\alpha-\alpha(p-2) \geq 0$, we have:
\begin{equation*}
    \begin{split}
        &\int_\Omega(\varepsilon +|D\boldsymbol{u}|^2)^{\frac{p-2-\alpha}{2}} \|D^2\boldsymbol{u}\|^2\varphi^2 \, dx\\
&\qquad+(p-2-\alpha)\int_\Omega(\varepsilon +|D\boldsymbol{u}|^2)^{\frac{p-4-\alpha}{2}} \sum_{i=1}^n(D\boldsymbol{u}:D\boldsymbol{u}_{i})^2\varphi^2 \, dx\\
&\qquad-\alpha(p-2)\int_\Omega(\varepsilon +|D\boldsymbol{u}|^2)^{\frac{p-6-\alpha}{2}}\sum_{k=1}^N\langle D^2\boldsymbol u D\boldsymbol u, \nabla u^k\rangle \langle D^2\boldsymbol u D\boldsymbol u, \nabla u^k\rangle\varphi^2 \, dx.\\
&\quad\geq \int_\Omega(\varepsilon +|D\boldsymbol{u}|^2)^{\frac{p-2-\alpha}{2}} \|D^2\boldsymbol{u}\|^2\varphi^2 \, dx
\end{split}
\end{equation*}

Otherwise, if $p-2-\alpha-\alpha(p-2) < 0$, using \eqref{eq:FraCo}, it follows that:
\begin{equation*}
    \begin{split}
        &\int_\Omega(\varepsilon +|D\boldsymbol{u}|^2)^{\frac{p-2-\alpha}{2}} \|D^2\boldsymbol{u}\|^2\varphi^2 \, dx\\
&\qquad+(p-2-\alpha)\int_\Omega(\varepsilon +|D\boldsymbol{u}|^2)^{\frac{p-4-\alpha}{2}} \sum_{i=1}^n(D\boldsymbol{u}:D\boldsymbol{u}_{i})^2\varphi^2 \, dx\\
&\qquad-\alpha(p-2)\int_\Omega(\varepsilon +|D\boldsymbol{u}|^2)^{\frac{p-6-\alpha}{2}}\sum_{k=1}^N\langle D^2\boldsymbol u D\boldsymbol u, \nabla u^k\rangle \langle D^2\boldsymbol u D\boldsymbol u, \nabla u^k\rangle\varphi^2 \, dx.\\
&\quad\geq (1-\alpha)(p-1)\int_\Omega(\varepsilon +|D\boldsymbol{u}|^2)^{\frac{p-2-\alpha}{2}} \|D^2\boldsymbol{u}\|^2\varphi^2 \, dx
\end{split}
\end{equation*}

Therefore, summing up, we obtain
\begin{eqnarray}\label{FraCo11}
&&\int_\Omega(\varepsilon +|D\boldsymbol{u}|^2)^{\frac{p-2-\alpha}{2}} \|D^2\boldsymbol{u}\|^2\varphi^2 \, dx\\\nonumber
&&\quad+(p-2-\alpha)\int_\Omega(\varepsilon +|D\boldsymbol{u}|^2)^{\frac{p-4-\alpha}{2}} \sum_{i=1}^n(D\boldsymbol{u}:D\boldsymbol{u}_{i})^2\varphi^2 \, dx\\\nonumber&&\quad-\alpha(p-2)\int_\Omega(\varepsilon +|D\boldsymbol{u}|^2)^{\frac{p-6-\alpha}{2}}\sum_{k=1}^N\langle D^2\boldsymbol u D\boldsymbol u, \nabla u^k\rangle \langle D^2\boldsymbol u D\boldsymbol u, \nabla u^k\rangle\varphi^2 \, dx \\\nonumber
&&\qquad \geq \min\{1,(p-1)(1-\alpha)\}\int_\Omega(\varepsilon +|D\boldsymbol{u}|^2)^{\frac{p-2-\alpha}{2}} \|D^2\boldsymbol{u}\|^2\varphi^2 \, dx.
\end{eqnarray}

Now we estimate the right-hand side of \eqref{a_4}. To proceed, we observe that 
\begin{eqnarray}\label{aa}
\nonumber &&|(D\boldsymbol{u}_{i}: \boldsymbol{u}_{i}(\nabla \varphi)^T)|\leq |D\boldsymbol{u}_{i}| |\boldsymbol{u}_{i}(\nabla \varphi)^T|=|D\boldsymbol{u}_{i}|\sqrt{\sum_{j=1}^N\sum_{k=1}^n(u^j_{i}\varphi_k)^2}\\&&\qquad=|D\boldsymbol{u}_{i}|\sqrt{|\nabla \varphi|^2\sum_{j=1}^N(u^j_{i})^2}\leq |D\boldsymbol{u}_{i}| |D\boldsymbol{u}||\nabla \varphi|.
\end{eqnarray}
Using weighted Young inequality and \eqref{aa}, we get
\begin{eqnarray}\label{eq:13est1}\nonumber&&\mathcal{I}_1=-2\int_\Omega\frac{(\varepsilon +|D\boldsymbol{u}|^2)^{\frac{p-2}{2}}}{(\varepsilon +|D\boldsymbol{u}|^{2})^{\frac{\alpha}{2}}}(D\boldsymbol{u}_{i}: \boldsymbol{u}_{i}(\nabla \varphi)^T) \varphi \  dx\\
&&\quad\leq2\int_\Omega(\varepsilon +|D\boldsymbol{u}|^2)^{\frac{p-2-\alpha}{2}}|(D\boldsymbol{u}_{i}: \boldsymbol{u}_{i}(\nabla \varphi)^T)|\varphi \, dx\\\nonumber
&&\quad \leq 2\int_\Omega(\varepsilon +|D\boldsymbol{u}|^2)^{\frac{p-2-\alpha}{2}}|D\boldsymbol{u}_{i}||D\boldsymbol{u}||\nabla \varphi|\varphi \, dx\\\nonumber
&&\quad \leq 2\delta\int_\Omega(\varepsilon +|D\boldsymbol{u}|^2)^{\frac{p-2-\alpha}{2}}|D\boldsymbol{u}_{i}|^2 {\varphi^2} \, dx\\\nonumber
&&\qquad+\frac{1}{2\delta}\int_\Omega(\varepsilon +|D\boldsymbol{u}|^2)^{\frac{p-2-\alpha}{2}}|D\boldsymbol{u}|^2|\nabla \varphi|^2 \, dx\\
&&\quad \leq 2\delta\int_\Omega(\varepsilon +|D\boldsymbol{u}|^2)^{\frac{p-2-\alpha}{2}}|D\boldsymbol{u}_{i}|^2 {\varphi ^2}  \, dx\\\nonumber
&&\qquad+\frac{1}{2\delta}\int_\Omega{(\varepsilon +|D\boldsymbol{u}|^2)^{\frac{p-\alpha}{2}}}|\nabla \varphi|^2 \, dx.
\end{eqnarray}

Summing up on $i=1,\dots,n$, leads to
\begin{eqnarray}\label{magg1}
 &&\sum_{i=1}^n \mathcal{I}_1 \leq 2\sum_{i=1}^{n}\int_\Omega(\varepsilon +|D\boldsymbol{u}_{\varepsilon}|^2)^{\frac{p-2-\alpha}{2}}|(D\boldsymbol{u}_{i}: \boldsymbol{u}(\nabla \varphi)^T)|\varphi \, dx
 \\\nonumber
 &&\qquad \leq 2\delta\int_\Omega(\varepsilon +|D\boldsymbol{u}|^2)^{\frac{p-2-\alpha}{2}}\|D^2\boldsymbol{u}\|^2 {\varphi^2} \, dx\\\nonumber&&\quad\qquad+\frac{n}{2\delta}\int_\Omega{(\varepsilon +|D\boldsymbol{u}|^2)^{\frac{p-\alpha}{2}}}|\nabla \varphi|^2 \, dx.
\end{eqnarray}

As in \eqref{eq:13est1}, by weighted Young inequality, follows that
\begin{eqnarray}\label{magg2}
	&&\sum_{i=1}^n \mathcal{I}_2=\nonumber -2(p-2)\sum_{i=1}^n\int_\Omega\frac{(\varepsilon +|D\boldsymbol{u}|^2)^{\frac{p-4}{2}}}{(\varepsilon +|D\boldsymbol{u}|^{2})^{\frac{\alpha}{2}}}(D\boldsymbol{u}:D\boldsymbol{u}_{i})(D\boldsymbol{u}: \boldsymbol{u}_{i}(\nabla \varphi)^T)\varphi \, dx\\
    && \quad\qquad\leq\nonumber2|p-2|\sum_{i=1}^{n}\int_\Omega(\varepsilon +|D\boldsymbol{u}|^2)^{\frac{p-4-\alpha}{2}}|(D\boldsymbol{u}:D\boldsymbol{u}_{i})||(D\boldsymbol{u}: \boldsymbol{u}_{i}(\nabla \varphi)^T)|\varphi \, dx
	\\
	&&\quad \qquad \leq 2\delta|p-2|\int_\Omega(\varepsilon +|D\boldsymbol{u}|^2)^{\frac{p-2-\alpha}{2}}\|D^2\boldsymbol{u}\|^2{\varphi^2} \, dx\\\nonumber&&\qquad \qquad+\frac{n|p-2|}{2\delta}\int_\Omega{(\varepsilon +|D\boldsymbol{u}|^2)^{\frac{p-\alpha}{2}}}|\nabla \varphi|^2 \, dx.
\end{eqnarray}

Let us focus now on the boundary terms appearing in \eqref{a_4}, namely $\mathcal{I}_3$, $\mathcal{I}_4$ and  $\mathcal{I}_6$. To this aim, we need to recall the definition of the tangential gradient with respect to a regular level set. For a scalar function $w\in C^1(\overline \Omega)$, we set
\begin{equation*}
    \nabla_{T}w:=\nabla w-\langle \nabla w,\boldsymbol{\nu}\rangle \boldsymbol{\nu},
\end{equation*}
where $\boldsymbol{\nu}$ is the outward unit vector on $\partial \Omega$.

For simplicity let us treat separately the Dirichlet, the Neumann and the convex case.

\textbf{Dirichlet case.} Since $\boldsymbol{u}$ satisfies the boundary condition \eqref{Dir_cond}, then $\nabla_{T} u^{\beta} = 0$ on $\partial{\Omega}$ for $\beta=1,\dots,N$. As a consequence:
\begin{equation}
\boldsymbol{\nu} = \frac{\nabla u^{\beta}}{|\nabla u^{\beta}|},\quad \text{if } \nabla u^\beta \neq 0.
\end{equation}
for $\beta=1,\dots,N$. In particular, in the following computations, we will use the fact that:
\begin{equation}\label{normalD}
    \boldsymbol{\nu}=\left(\frac{u_1^{\alpha_1}}{|\nabla u^{\alpha_1}|},...,\frac{u_n^{\alpha_N}}{|\nabla u^{\alpha_N}|}\right), \quad \text{if} \quad |D\boldsymbol{u}|\neq 0,
\end{equation}
with $\alpha_1,...,\alpha_N\in \{1,...,N\}$.
Otherwise, if $\nabla u^\beta =0$, the corresponding boundary term is identically zero.

So that, by \eqref{normalD}, we can rewrite boundary term, $\mathcal{I}_6$, as follows

\begin{equation}\label{boundary}
    \begin{split}
-\sum_{i=1}^n \mathcal{I}_6=\sum_{i=1}^n& \int_{\partial \Omega}\langle \operatorname{\bf div}( (\varepsilon+|D{\boldsymbol u}|^2)^{\frac{p-2}{2}}D{\boldsymbol u}),\boldsymbol{u}_i\rangle \nu_i \frac{\varphi ^2}{(\varepsilon+|D{\boldsymbol u}|^2)^{\frac{\alpha}{2}}}\,d\mathcal{H}^{n-1}\\
        &=\int_{\partial \Omega} \sum_{\beta=1}^{N} \operatorname{div}((\varepsilon+|D{\boldsymbol u}|^2)^{\frac{p-2}{2}} \nabla u^\beta) \sum_{i=1}^{n} u_i^\beta \nu_i \frac{\varphi ^2}{(\varepsilon+|D{\boldsymbol u}|^2)^{\frac{\alpha}{2}}} \,d\mathcal{H}^{n-1}\\
&= (p-2)\sum_{i=1}^n\int_{\partial \Omega}(\varepsilon+|D{\boldsymbol u}|^2)^{\frac{p-4-\alpha}{2}}\sum_{\beta=1}^N \langle D^2 \boldsymbol{u}D\boldsymbol{u},\nabla u^\beta\rangle  u_i^\beta\nu_i \varphi^2\,d\mathcal{H}^{n-1}\\
&\quad+\int_{\partial \Omega} (\varepsilon+|D{\boldsymbol u}|^2)^{\frac{p-2-\alpha}{2}} \sum_{\beta=1}^{N} \Delta u^\beta |\nabla u^\beta| \varphi^2 \,d\mathcal{H}^{n-1},\\
&= (p-2)\sum_{i=1}^n\int_{\partial \Omega}(\varepsilon+|D{\boldsymbol u}|^2)^{\frac{p-4-\alpha}{2}}\langle (D\boldsymbol{u_i})^T\boldsymbol{u}_i,\boldsymbol{\nu}\rangle |D{\boldsymbol u}|^2 \varphi^2\,d\mathcal{H}^{n-1}\\
&\quad+\int_{\partial \Omega} (\varepsilon+|D{\boldsymbol u}|^2)^{\frac{p-2-\alpha}{2}} \sum_{\beta=1}^{N} \Delta u^\beta |\nabla u^\beta| \varphi^2 \,d\mathcal{H}^{n-1},
    \end{split}
\end{equation}
where, we used the identity:
\begin{equation*}
    \sum_{\beta=1}^N \langle D^2 \boldsymbol{u}D\boldsymbol{u},\nabla u^\beta\rangle \sum_{i=1}^n u_i^\beta\nu_i = \sum_{i=1}^n \langle (D\boldsymbol{u}_i)^T\boldsymbol{u}_i,\nu \rangle |D\boldsymbol{u}|^2.
\end{equation*}

Now, by \cite[Equation $(3,1,1,8)$]{Gr} (see also \cite[Equation (3.44)]{Cma}), we recall that 
\begin{equation}\label{Nonsmooth_dom}
\begin{split}
    \Delta u^\beta \frac{\partial u^\beta}{\partial \boldsymbol{\nu}} - &\sum_{i,j=1}^n u_{ij}^\beta u_i^\beta \nu_j\\
    &= \operatorname{div}_T \left ( \frac{\partial u^\beta}{\partial \boldsymbol \nu} \nabla_T u^\beta\right ) - \operatorname{tr}\mathcal B \left (\frac{\partial u^\beta}{\partial \boldsymbol \nu} \right)^2 \\
    &\qquad- \mathcal{B}(\nabla_Tu^\beta,\nabla_Tu^\beta) - 2 \langle \nabla_Tu^\beta, \nabla_T\frac{\partial u^\beta}{\partial \boldsymbol \nu}\rangle \quad \text{on } \partial \Omega,
\end{split}
\end{equation}
for $\beta=1,...,N$, where $\operatorname{tr} \mathcal{B}$ is the trace of $\mathcal{B}$, $\operatorname{div}_T$ and $\nabla_T$ denote the divergence and the gradient operator on $\partial\Omega$.

By \eqref{Nonsmooth_dom}, since $\nabla_Tu^\beta = 0$ and by \eqref{normalD}, the boundary term, $\mathcal{I}_4$, becomes
\begin{equation}\label{pezzobordo1}
\begin{split}
    \sum_{i=1}^n \mathcal{I}_4&=\sum_{i=1}^{n} \int_{\partial \Omega}(\varepsilon+|D{\boldsymbol u}|^2)^{\frac{p-2-\alpha}{2}}\langle (D\boldsymbol{u}_i)^T\boldsymbol{u}_i,\boldsymbol{\nu}\rangle\varphi^2\,d\mathcal{H}^{n-1}\\
    &\quad= \int_{\partial \Omega}(\varepsilon+|D{\boldsymbol u}|^2)^{\frac{p-2-\alpha}{2}}\sum_{\beta=1}^{N} \Delta u^\beta |\nabla u^\beta| \varphi^2\,d\mathcal{H}^{n-1}\\
    &\qquad+ \int_{\partial \Omega}(\varepsilon+|D{\boldsymbol u}|^2)^{\frac{p-2-\alpha}{2}} \operatorname{tr}\mathcal{B} \sum_{\beta=1}^{N} |\nabla u^\beta|^2 \varphi^2\,d\mathcal{H}^{n-1}.
\end{split}
\end{equation}

Now we rewrite the boundary term $\mathcal{I}_3$.
Using \eqref{normalD}, it is easy to check that:
\begin{equation*}
    \langle (D \boldsymbol{u})^T\boldsymbol{u}_i, \boldsymbol{\nu}\rangle = \nu_i \sum_{\beta=1}^N{|\nabla u^\beta|}^2,
\end{equation*}

and
\begin{equation*}
    \langle (D\boldsymbol{u}_i)^T\boldsymbol{u}_i, \boldsymbol{\nu}\rangle= \sum_{\beta=1}^N \sum_{j=1}^n u_{ji}^\beta u_j^\beta \nu_i.
\end{equation*}
Therefore, the following identity holds
\begin{equation}\label{boundaryidentity}
|D{\boldsymbol u}|^2 \langle (D\boldsymbol{u}_i)^T\boldsymbol{u}_i,\boldsymbol{\nu}\rangle = (D\boldsymbol u_i:D\boldsymbol u)\langle (D\boldsymbol{u})^T\boldsymbol{u}_i,\boldsymbol{\nu}\rangle,
\end{equation}
for $i=1,...,n$.

By \eqref{boundaryidentity}, we deduce
\begin{equation}\label{stimaI_3}
\begin{split}
    \mathcal{I}_3 &=(p-2)\int_{\partial \Omega}(\varepsilon+|D{\boldsymbol u}|^2)^{\frac{p-4-\alpha}{2}}(D\boldsymbol u_i:D\boldsymbol u)\langle (D\boldsymbol{u})^T\boldsymbol{u}_i,\boldsymbol{\nu}\rangle\varphi^2\,d\mathcal{H}^{n-1} \\
    &=(p-2)\int_{\partial \Omega}(\varepsilon+|D{\boldsymbol u}|^2)^{\frac{p-4-\alpha}{2}}|D{\boldsymbol u}|^2 \langle (D\boldsymbol{u}_i)^T\boldsymbol{u}_i,\boldsymbol{\nu}\rangle \varphi^2\,d\mathcal{H}^{n-1}
\end{split}
\end{equation}

Summing up, using \eqref{boundary}, \eqref{pezzobordo1} and \eqref{stimaI_3}, we obtain the resulting boundary term
\begin{equation}\label{trB}
\begin{split}
    \sum_{i=1}^n \left(\mathcal{I}_3+\mathcal{I}_4+\mathcal{I}_6\right )&=\int_{\partial \Omega}(\varepsilon+|D{\boldsymbol u}|^2)^{\frac{p-2-\alpha}{2}} \operatorname{tr}\mathcal{B} |D{\boldsymbol u}|^2 \varphi^2\,d\mathcal{H}^{n-1}\\
    &\leq C(n,N) \int_{\partial \Omega} (\varepsilon+|D{\boldsymbol u}|^2)^{\frac{p-2-\alpha}{2}}|D\boldsymbol{u}|^2|\mathcal{B}|\varphi ^2 \,d\mathcal{H}^{n-1},
\end{split}
\end{equation}
where $C(n,N)$ is a positive constant. Now we assume that $\varphi \in C^{\infty}_c(B_r(x_0))$, for some $x_0\in \partial \Omega$ and $r\in (0,R_{\Omega})$, where $R_\Omega$ is given by Lemma \ref{antoninho}.
By \eqref{antony}, with $\rho =|\mathcal{B}|$ and $v=\varphi(\varepsilon+|D\boldsymbol{u}|^2)^{\frac{p-2-\alpha}{4}}u_i^{\beta}$, for $i=1,...,n$ and $\beta=1,...,N$, we deduce that 

\begin{eqnarray}\label{penultimotermine}
&&\sum_{i=1}^n \left(\mathcal{I}_3+\mathcal{I}_4+\mathcal{I}_6\right )=\int_{\partial \Omega} (\varepsilon+|D{\boldsymbol u}|^2)^{\frac{p-2-\alpha}{2}}|D\boldsymbol{u}|^2|\mathcal{B}|\varphi ^2 \,d\mathcal{H}^{n-1} \\\nonumber &&\qquad \leq C(n,N,p,\alpha, L_\Omega)K_{\Omega}(r)\left(\int_{\Omega\cap B_r(x_0)}(\varepsilon+|D{\boldsymbol u}|^2)^{\frac{p-2-\alpha}{2}}\|D^2\boldsymbol{u}\|^2\varphi ^2\,dx\right.\\\nonumber &&\qquad\qquad\qquad\qquad\qquad\qquad \left.  + \int_{\Omega\cap B_r(x_0)}(\varepsilon+|D{\boldsymbol u}|^2)^{\frac{p-2-\alpha}{2}}|D\boldsymbol{u}|^2|\nabla\varphi| ^2 \,dx \right),
  \end{eqnarray} 
where $C(n,N,p,\alpha, L_\Omega)$ is a positive constant.

Using the estimates of the left hand side of \eqref{a_4}, (see \eqref{K_N} and \eqref{FraCo11}), and the ones of the right hand side of \eqref{a_4}, (see  \eqref{magg1}, \eqref{magg2},  and \eqref{penultimotermine}), it follows that:

if $1<p\leq 2$
\begin{equation*}
    \begin{split}
        &  K_N(\alpha,p)\int_\Omega(\varepsilon +|D\boldsymbol{u}|^2)^{\frac{p-2-\alpha}{2}} \|D^2\boldsymbol{u}\|^2\varphi^2 \, dx \\  &\leq 2\delta(|p-2|+1)\int_\Omega(\varepsilon +|D\boldsymbol{u}|^2)^{\frac{p-2-\alpha}{2}}\|D^2\boldsymbol{u}\|^2 {\varphi^2} \, dx\\&\quad+\frac{n}{2\delta}(|p-2|+1)\int_\Omega{(\varepsilon +|D\boldsymbol{u}|^2)^{\frac{p-\alpha}{2}}}|\nabla \varphi|^2 \, dx \\ &\quad + C(n,N,p,\alpha, L_\Omega)K_{\Omega}(r)\left(\int_{\Omega\cap B_r(x_0)}(\varepsilon+|D{\boldsymbol u}|^2)^{\frac{p-2-\alpha}{2}}\|D^2\boldsymbol{u}\|^2\varphi ^2\,dx\right.\\& \qquad\qquad\qquad\qquad\qquad\qquad\left.  + \int_{\Omega\cap B_r(x_0)}(\varepsilon+|D{\boldsymbol u}|^2)^{\frac{p-2-\alpha}{2}}|D\boldsymbol{u}|^2|\nabla\varphi ^2| \,dx \right) \\
        &\quad+ \sum_{i=1}^n\int_\Omega \langle \operatorname{\bf div}( (\varepsilon+|D{\boldsymbol u}|^2)^{\frac{p-2}{2}}D{\boldsymbol u}),\partial _{x_i} \left(\frac{ \boldsymbol{u}_i \varphi ^2}{(\varepsilon+|D{\boldsymbol u}|^2)^{\frac{\alpha}{2}}} \right) \rangle\,dx;
    \end{split}
\end{equation*}

if $p>2$

\begin{equation*}
    \begin{split}
        &\min \{1,(p-1)(1-\alpha)\}\int_\Omega(\varepsilon +|D\boldsymbol{u}|^2)^{\frac{p-2-\alpha}{2}} \|D^2\boldsymbol{u}\|^2\varphi^2 \, dx \\ & \leq 2\delta(|p-2|+1)\int_\Omega(\varepsilon +|D\boldsymbol{u}|^2)^{\frac{p-2-\alpha}{2}}\|D^2\boldsymbol{u}\|^2 {\varphi^2} \, dx\\&\quad+\frac{n}{2\delta}(|p-2|+1)\int_\Omega{(\varepsilon +|D\boldsymbol{u}|^2)^{\frac{p-\alpha}{2}}}|\nabla \varphi|^2 \, dx \\ &\quad + C(n,N,p,\alpha, L_\Omega)K_{\Omega}(r)\left(\int_{\Omega\cap B_r(x_0)}(\varepsilon+|D{\boldsymbol u}|^2)^{\frac{p-2-\alpha}{2}}\|D^2\boldsymbol{u}\|^2\varphi ^2\,dx\right.\\& \qquad\qquad\qquad\qquad\qquad\qquad\left.  + \int_{\Omega\cap B_r(x_0)}(\varepsilon+|D{\boldsymbol u}|^2)^{\frac{p-2-\alpha}{2}}|D\boldsymbol{u}|^2|\nabla\varphi |^2 \,dx \right) \\
        & \quad+\sum_{i=1}^n\int_\Omega \langle \operatorname{\bf div}( (\varepsilon+|D{\boldsymbol u}|^2)^{\frac{p-2}{2}}D{\boldsymbol u}),\partial _{x_i} \left(\frac{ \boldsymbol{u}_i \varphi ^2}{(\varepsilon+|D{\boldsymbol u}|^2)^{\frac{\alpha}{2}}} \right) \rangle\,dx.
    \end{split}
\end{equation*}

There exists a positive constant $C(p,\alpha)$, for any $p >1$, such that 

\begin{equation}\label{zappa}
    \begin{split}
        &(C(p,\alpha)-2\delta(|p-2|+1)\\&\qquad\qquad-C(n,N,p,\alpha, L_\Omega)K_{\Omega}(r))\int_\Omega(\varepsilon +|D\boldsymbol{u}|^2)^{\frac{p-2-\alpha}{2}} \|D^2\boldsymbol{u}\|^2\varphi^2 \, dx \\ & \leq \left(\frac{n}{2\delta}(|p-2|+1)+C(n,N,p,\alpha, L_\Omega)K_{\Omega}(r)\right)\int_\Omega{(\varepsilon +|D\boldsymbol{u}|^2)^{\frac{p-\alpha}{2}}}|\nabla \varphi|^2 \, dx  \\
        & \qquad+\sum_{i=1}^n\int_\Omega \langle \operatorname{\bf div}( (\varepsilon+|D{\boldsymbol u}|^2)^{\frac{p-2}{2}}D{\boldsymbol u}),\partial _{x_i} \left(\frac{ \boldsymbol{u}_i}{(\varepsilon+|D{\boldsymbol u}|^2)^{\frac{\alpha}{2}}} \right) \rangle \varphi ^2\,dx\\
        &\qquad+2\sum_{i=1}^n\int_\Omega \langle \operatorname{\bf div}( (\varepsilon+|D{\boldsymbol u}|^2)^{\frac{p-2}{2}}D{\boldsymbol u}),\boldsymbol{u}_i \rangle \frac{\varphi \varphi_i}{(\varepsilon+|D{\boldsymbol u}|^2)^{\frac{\alpha}{2}}} \,dx
    \end{split}
\end{equation}
Moreover, by standard Young inequality, we have
\begin{equation}\label{phi_term}
    \begin{split}
    2\sum_{i=1}^n&\int_\Omega \langle \operatorname{\bf div}( (\varepsilon+|D{\boldsymbol u}|^2)^{\frac{p-2}{2}}D{\boldsymbol u}),\boldsymbol{u}_i \rangle \frac{\varphi \varphi_i}{(\varepsilon+|D{\boldsymbol u}|^2)^{\frac{\alpha}{2}}} \,dx\\
    &\leq C(n) \left(\int_\Omega |\operatorname{\bf div}( (\varepsilon+|D{\boldsymbol u}|^2)^{\frac{p-2}{2}}D{\boldsymbol u})| |D{\boldsymbol u}|^{1-\alpha} \varphi^2 \,dx \right.\\
    & \quad \qquad\left. +\int_\Omega |\operatorname{\bf div}( (\varepsilon+|D{\boldsymbol u}|^2)^{\frac{p-2}{2}}D{\boldsymbol u})| |D{\boldsymbol u}|^{1-\alpha} |\nabla \varphi|^2 \,dx \right).
    \end{split}
\end{equation}

If condition \eqref{condizionesuk} is fulfilled with $\overline c=C(p,\alpha)/C(n,N,p,\alpha,L_\Omega)$, for $\delta>0$ sufficiently small, there exists a constant $C>0$, depending on $\Omega$ only through $L_\Omega$, $d_\Omega$ and $r'\in (0,R_\Omega)$ depending also on $K$, such that:

\begin{equation*}
\begin{split}
    C(p,\alpha)&-2\delta(|p-2|+1)-C(n,N,p,\alpha, L_\Omega)K_{\Omega}(r) \\
    &\geq C(p,\alpha)-2\delta(|p-2|+1)-C(n,N,p,\alpha, L_\Omega)K(r)\geq C,
\end{split}
\end{equation*}
with $r \in (0,r']$.

Therefore, by \eqref{phi_term}, the estimate \eqref{zappa} becomes

\begin{equation}\label{abc}
    \begin{split}
        &\int_\Omega(\varepsilon +|D\boldsymbol{u}|^2)^{\frac{p-2-\alpha}{2}} \|D^2\boldsymbol{u}\|^2\varphi^2 \, dx \\ 
        & \leq \mathcal{C}\left(\int_\Omega{(\varepsilon +|D\boldsymbol{u}|^2)^{\frac{p-\alpha}{2}}}|\nabla \varphi|^2 \, dx\right.  \\
        &  \qquad\left.+\sum_{i=1}^n\int_\Omega \langle \operatorname{\bf div}( (\varepsilon+|D{\boldsymbol u}|^2)^{\frac{p-2}{2}}D{\boldsymbol u}),\partial _{x_i} \left(\frac{ \boldsymbol{u}_i}{(\varepsilon+|D{\boldsymbol u}|^2)^{\frac{\alpha}{2}}} \right) \rangle \varphi ^2\,dx\right. \\
        &\qquad\left. +\int_\Omega |\operatorname{\bf div}( (\varepsilon+|D{\boldsymbol u}|^2)^{\frac{p-2}{2}}D{\boldsymbol u})| |D{\boldsymbol u}|^{1-\alpha} \varphi^2 \,dx \right.\\
    & \qquad\left. +\int_\Omega |\operatorname{\bf div}( (\varepsilon+|D{\boldsymbol u}|^2)^{\frac{p-2}{2}}D{\boldsymbol u})| |D{\boldsymbol u}|^{1-\alpha} |\nabla \varphi|^2 \,dx \right),
    \end{split}
\end{equation}
for some positive constant $\mathcal{C}=\mathcal{C}(n,N,p,\alpha,L_\Omega)$, if $r \in (0,r']$.

In the case $x_0\in \Omega$ and $B_r(x_0)\subset\Omega$ inequality \eqref{abc} holds. Indeed the boundary term
in \eqref{a_4} vanishes. Moreover the constant $\mathcal{C}$ is independent of $L_\Omega$ and $K_{\Omega}$.

Now there exists $r'' \in (0,r')$, hence depending on $L_\Omega$, $d_\Omega$ and $K$ such that $\overline \Omega$ admits a finite covering $\{B_{r_k}\}$, with $r''\leq r_k \leq r'$, and a family of functions $\{\varphi_{r_k}\}$
such that $\varphi_{r_k} \in C_c^{\infty}(B_{r_k})$ and $\{\varphi^2_{r_k}\}$ is a partition of unity of $\overline \Omega$ associated with the covering $\{B_{r_k}\}$. Thus, $\sum_k \varphi^2_{r_k}=1$ in $\overline \Omega$. Moreover the functions $\varphi_{r_k}$ can also chosen so that $|\nabla \varphi_{r_k}|\leq C/r_k\leq C / r''$. Using \eqref{abc} with $\varphi=\varphi_{r_k}$, we obtain 

\begin{equation}\label{abcd}
    \begin{split}
        &\int_\Omega(\varepsilon +|D\boldsymbol{u}|^2)^{\frac{p-2-\alpha}{2}} \|D^2\boldsymbol{u}\|^2 \, dx \\ & \leq \mathcal{C}\left(\int_\Omega{(\varepsilon +|D\boldsymbol{u}|^2)^{\frac{p-\alpha}{2}}} \, dx\right.  \\
        &  \qquad\left.+\sum_{i=1}^n\int_\Omega \langle \operatorname{\bf div}( (\varepsilon+|D{\boldsymbol u}|^2)^{\frac{p-2}{2}}D{\boldsymbol u}),\partial _{x_i} \left(\frac{ \boldsymbol{u}_i}{(\varepsilon+|D{\boldsymbol u}|^2)^{\frac{\alpha}{2}}} \right) \rangle\,dx\right. \\
        &\qquad\left. +\int_\Omega |\operatorname{\bf div}( (\varepsilon+|D{\boldsymbol u}|^2)^{\frac{p-2}{2}}D{\boldsymbol u})| |D{\boldsymbol u}|^{1-\alpha} \,dx \right),
    \end{split}
\end{equation}

where $\mathcal{C}=\mathcal{C}(n,N,p,\alpha,L_\Omega,d_\Omega,K)$ is a positive constant and $d_\Omega$ is the diameter of $\Omega$. This concludes the proof of our theorem in the case condition \eqref{Dir_cond} is in force.

\vspace{0.5 cm}

\textbf{Neumann case.} Regarding the case with homogeneous Neumann on the boundary, the idea is the same. In the following we outline the main changes in the proof.\\
Starting from \eqref{a_4},we notice that, since $\boldsymbol u$ obeys \eqref{Neu_cond}, $\mathcal{I}_3=0$ for any $i=1,...,n$ and $\sum_{i=1}^n \mathcal{I}_6=0$.

Now we estimate the term $I_4$ in \eqref{a_4}. Recalling \eqref{Nonsmooth_dom}, since $\frac{\partial u^\beta}{\partial \boldsymbol{\nu}}=0$, we deduce
\begin{equation}\label{nonsmoothdom}
    \sum_{i=1}^n \langle (D\boldsymbol{u}_i)^T\boldsymbol{u}_i,\boldsymbol{\nu}\rangle = \sum_{\beta=1}^N \langle \nabla^2 u^\beta \nabla u^\beta,\boldsymbol{\nu}\rangle = 
    \sum_{\beta=1}^N \mathcal{B}(\nabla_{T} u^\beta,\nabla_{T} u^\beta).
\end{equation}
Therefore, by \eqref{nonsmoothdom},
\begin{equation}\label{boundNeum2}
\begin{split}
    \mathcal{I}_4=\sum_{i=1}^n&\int_{\partial \Omega}(\varepsilon+|D{\boldsymbol u}|^2)^{\frac{p-2-\alpha}{2}}\langle (D\boldsymbol{u}_i)^T\boldsymbol{u}_i,\boldsymbol{\nu}\rangle\varphi^2\,d\mathcal{H}^{n-1}\\
    &=\sum_{\beta=1}^N \int_{\partial \Omega}(\varepsilon+|D{\boldsymbol u}|^2)^{\frac{p-2-\alpha}{2}} \mathcal{B}(\nabla_{T} u^\beta,\nabla_{T} u^\beta)\varphi^2\,d\mathcal{H}^{n-1}\\
    &\leq C(n,N) \int_{\partial \Omega} (\varepsilon+|D{\boldsymbol u}|^2)^{\frac{p-2-\alpha}{2}}|D\boldsymbol{u}|^2|\mathcal{B}|\varphi ^2 \,d\mathcal{H}^{n-1} 
\end{split}
\end{equation}
With these variants, the analogous procedure as in the Dirichlet case yields (see\eqref{zappa})
\begin{equation}\label{abcdNeu}
    \begin{split}
        &\int_\Omega(\varepsilon +|D\boldsymbol{u}|^2)^{\frac{p-2-\alpha}{2}} \|D^2\boldsymbol{u}\|^2 \varphi^2\, dx \\ & \quad\leq \mathcal{C}\left(\int_\Omega{(\varepsilon +|D\boldsymbol{u}|^2)^{\frac{p-\alpha}{2}}} \, \varphi^2dx\right.  \\
        &  \quad\qquad\left.+\int_\Omega \langle \operatorname{\bf div}( (\varepsilon+|D{\boldsymbol u}|^2)^{\frac{p-2}{2}}D{\boldsymbol u}),\partial _{x_i} \left(\frac{ \boldsymbol{u}_i \varphi ^2}{(\varepsilon+|D{\boldsymbol u}|^2)^{\frac{\alpha}{2}}} \right) \rangle\,dx\right),
    \end{split}
\end{equation}
where $\mathcal{C}=C(n,N,p,\alpha,L_\Omega,d_\Omega,K)$ is a positive constant.

Integrating by parts in $x_i$ the last term of \eqref{abcdNeu} and using \eqref{Neu_cond}, we obtain:
\begin{equation*}
    \begin{split}
        &\int_\Omega(\varepsilon +|D\boldsymbol{u}|^2)^{\frac{p-2-\alpha}{2}} \|D^2\boldsymbol{u}\|^2 \varphi^2\, dx \\ & \quad\leq \mathcal{C}\left(\int_\Omega{(\varepsilon +|D\boldsymbol{u}|^2)^{\frac{p-\alpha}{2}}} \, \varphi^2dx\right.  \\
        &  \quad\qquad\left.-\int_\Omega \langle \partial_{x_i}(\operatorname{\bf div}( (\varepsilon+|D{\boldsymbol u}|^2)^{\frac{p-2}{2}}D{\boldsymbol u})),\boldsymbol{u}_i\rangle\frac{ \varphi ^2}{(\varepsilon+|D{\boldsymbol u}|^2)^{\frac{\alpha}{2}}}\rangle\,dx\right),
    \end{split}
\end{equation*}
where $\mathcal{C}=C(n,N,p,\alpha,L_\Omega,d_\Omega,K)$ is a positive constant.

Finally, by the partition of unity and reintegrating by parts in the variable $x_i$ we obtain the thesis.

\vspace{0.5 cm}

\textbf{Convex case}. In particular, if $\Omega$ is a convex set, in the Dirichlet case, we can neglect the boundary term \eqref{trB} since it is nonpositive, indeed $$\operatorname{tr}\mathcal{B} \leq 0 \quad \text{on }\partial\Omega.
$$

Proceeding as in the Dirichlet case, we have 
\begin{equation}\label{abcdeh}
    \begin{split}
        &\int_\Omega(\varepsilon +|D\boldsymbol{u}|^2)^{\frac{p-2-\alpha}{2}} \|D^2\boldsymbol{u}\|^2 \, dx \\ & \leq \mathcal{C}\left(\int_\Omega{(\varepsilon +|D\boldsymbol{u}|^2)^{\frac{p-\alpha}{2}}} \, dx\right.  \\&  \qquad\left.+\sum_{i=1}^n\int_\Omega \langle \operatorname{\bf div}( (\varepsilon+|D{\boldsymbol u}|^2)^{\frac{p-2}{2}}D{\boldsymbol u}), \partial _{x_i}\left(\frac{\boldsymbol{u}_i}{(\varepsilon+|D{\boldsymbol u}|^2)^{\frac{\alpha}{2}}}\right)\rangle\,dx\right.\\
        &\qquad\left. +\int_\Omega |\operatorname{\bf div}( (\varepsilon+|D{\boldsymbol u}|^2)^{\frac{p-2}{2}}D{\boldsymbol u})| |D{\boldsymbol u}|^{1-\alpha} \,dx \right).
    \end{split}
\end{equation}
where $\mathcal{C}=C(n,N,p,\alpha,L_\Omega,d_\Omega)$ is a positive constant. We stress that in this case the quantity $K_\Omega$ does not play any role.

Regarding the Neumann case, if we assume that $\Omega$ is convex, the term \eqref{boundNeum2} is nonpositive since
\begin{equation*}
    \mathcal{B}(\nabla_{T} u^\beta,\nabla_{T} u^\beta) \leq 0 \quad \text{on } \partial{\Omega}.
\end{equation*}

With the analogous procedure as in the Neumann case, we deduce 
\begin{equation}\label{abcdefgh}
    \begin{split}
        &\int_\Omega(\varepsilon +|D\boldsymbol{u}|^2)^{\frac{p-2-\alpha}{2}} \|D^2\boldsymbol{u}\|^2 \, dx \\ & \leq \mathcal{C}\left(\int_\Omega{(\varepsilon +|D\boldsymbol{u}|^2)^{\frac{p-\alpha}{2}}} \, dx\right.  \\&  \qquad\left.+\sum_{i=1}^n\int_\Omega \langle \operatorname{\bf div}( (\varepsilon+|D{\boldsymbol u}|^2)^{\frac{p-2}{2}}D{\boldsymbol u}), \partial _{x_i}\left(\frac{\boldsymbol{u}_i}{(\varepsilon+|D{\boldsymbol u}|^2)^{\frac{\alpha}{2}}}\right)\rangle\,dx\right),
    \end{split}
\end{equation}

Thus, estimate \eqref{abcdefgh} still holds with a constant $\mathcal{C}$ independent on $K_{\Omega}$.

\end{proof}
To prove our main result, we need the following technical lemma (see \cite[Lemma 5.2]{Cma} or \cite{Ant}):
\vspace{0.5 cm}

\begin{lemma}\label{approxcap}
Let $\Omega$ be a bounded Lipschitz domain in $\R^n$, $n \geq 2$ such that $\partial \Omega \in W^{2,1}$. Assume that the function $\mathcal K_\Omega (r)$, defined as in \eqref{KB}, is finite-valued for $r\in (0,1)$.
Then there exist positive constants $r_0$ and $C$ and a sequence of bounded open sets $\{\Omega_m\}$, 
such that $\partial \Omega _m \in C^\infty$, $\Omega \subset \Omega _m$, $\lim _{m \to \infty}|\Omega _m \setminus \Omega| = 0$, the Hausdorff distance between $\Omega _m$ and $\Omega$ tends to $0$ as $m \to \infty$,
\begin{equation}\label{prop1}
L_{\Omega _m} \leq C L_\Omega \,, \quad d_{\Omega _m} \leq C d_\Omega
\end{equation}
and
\begin{equation}\label{prop2}
\mathcal K_{\Omega_m}(r) \leq C \mathcal K_{\Omega} (r)
\end{equation}
for $r\in (0, r_0)$ and $m \in \mathbb N$.
\end{lemma}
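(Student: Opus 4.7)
The plan is to construct $\Omega_m$ by local mollification of the boundary charts of $\Omega$, shifted slightly outward so that $\Omega\subset\Omega_m$, and then to verify that the curvature-capacitary bound carries over to the approximating domains.

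First, exploiting the Lipschitz hypothesis on $\partial\Omega$, I would select a finite open cover $\{B_{R_j/2}(x_j)\}_{j=1}^J$ of $\partial\Omega$ with $R_j\leq R_\Omega$, in each of which $\partial\Omega$ is the graph of a Lipschitz function $\psi_j\in W^{2,1}(B'_{R_j})$ with $\|D\psi_j\|_\infty\leq L_\Omega$. Fixing a standard radial mollifier $\rho$ and a sequence $h_m\downarrow 0$, I would set
\[
\tilde\psi_{j,m}(x'):= (\psi_j * \rho_{h_m})(x') - \delta_m,\qquad \delta_m := C_0 L_\Omega h_m,
\]
with $C_0$ large enough that $\tilde\psi_{j,m}\leq \psi_j$ pointwise (this uses only the Lipschitz modulus of $\psi_j$). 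Consequently the local super-level sets $\{x_n>\tilde\psi_{j,m}(x')\}$ contain $\Omega$ inside each chart.

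Next I would glue these local pieces using a smooth partition of unity $\{\eta_j\}$ subordinate to $\{B_{R_j/2}(x_j)\}$ together with an interior cutoff, obtaining a global smooth hypersurface that bounds an open set $\Omega_m\supset\Omega$ with $|\Omega_m\setminus\Omega|\to 0$ and Hausdorff distance $\to 0$ (both follow from $\delta_m\to 0$). Since blending only involves a bounded number of overlapping charts, the pointwise bound $\|D\psi_{j,m}\|_\infty \leq L_\Omega$ yields $L_{\Omega_m}\leq CL_\Omega$, while clearly $d_{\Omega_m}\leq d_\Omega+2\delta_m\leq Cd_\Omega$, and hence \eqref{prop1} holds.

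The main technical obstacle is \eqref{prop2}. In each chart the weak second fundamental form $\mathcal B_m$ of $\partial\Omega_m$ is, up to a bounded smooth factor in $D\psi_{j,m}$, pointwise controlled by $|D^2\psi_{j,m}| = |(D^2\psi_j)*\rho_{h_m}|$. Pulling back to the flat parameter domain, \eqref{prop2} reduces to showing that the capacitary functional
\[
\sup_{E\subset B_r(y),\ y\in B'_{R_j}}\frac{\int_{E}|(D^2\psi_j)*\rho_{h_m}|\,dx'}{\mathrm{cap}(B_r(y),E)}
\]
is dominated by $C\,\mathcal K_\Omega(r)$ for small $r$ and every $m$. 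The key point is that by Fubini, for $E\subset B_r(y)$ one has $\int_E |D^2\psi_j|*\rho_{h_m} \leq \int_{E+B_{h_m}}|D^2\psi_j|$, and as soon as $h_m\ll r$ the capacity of $E+B_{h_m}$ relative to $B_{r+h_m}(y)$ is comparable to $\mathrm{cap}(B_r(y),E)$ by standard capacity estimates for balls and their slight enlargements. This is the technical crux, handled in \cite[Lemma 5.2]{Cma} (see also \cite{Ant}); combining it with the finiteness of the chart cover and the blending yields \eqref{prop2} for $r\in(0,r_0)$ uniformly in $m$.
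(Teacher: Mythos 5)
The paper does not prove Lemma~\ref{approxcap}: it is taken verbatim from the literature, with a pointer to \cite[Lemma 5.2]{Cma} and \cite{Ant}, so there is no in-paper argument to compare against. Evaluating your sketch on its own merits, the overall strategy (mollify local graph functions, shift outward by $O(L_\Omega h_m)$, glue, then transfer the curvature--capacitary bound) is the right high-level plan, but the crucial step you describe for~\eqref{prop2} does not work as stated.

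The problem is the claim that for $h_m\ll r$ the condenser capacities $\mathrm{cap}\bigl(B_{r+h_m}(y),E+B_{h_m}\bigr)$ and $\mathrm{cap}\bigl(B_r(y),E\bigr)$ are ``comparable.'' Condenser capacity is increasing in the inner set and decreasing in the outer ball, so enlarging both pushes in opposite directions and the ratio is \emph{not} uniformly bounded over all admissible $E$: if $E$ is a set of positive measure of diameter much smaller than $h_m$, then $\mathrm{cap}(B_r(y),E)$ can be arbitrarily small while $\mathrm{cap}\bigl(B_{r+h_m}(y),E+B_{h_m}\bigr)$ is bounded below by $\mathrm{cap}\bigl(B_{r+h_m}(y),B_{h_m}(y)\bigr)$, which is of order $h_m^{\,n-3}$ in $\R^{n-1}$. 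Since you put the enlarged capacity in the \emph{denominator}, the resulting inequality goes the wrong way and the estimate breaks. The argument that actually works (and is what \cite{Cma,Ant} carry out) applies Fubini the other way around: write $\int_E|D^2\psi_j|*\rho_{h_m}=\int\rho_{h_m}(z)\bigl(\int_{E-z}|D^2\psi_j|\bigr)\,dz$ and use that $E-z$ is a \emph{translate} of $E$, hence has the same relative capacity with respect to the translated ball; one then absorbs the $O(h_m)$ mismatch between the translated ball and a ball centered on $\partial\Omega$ into the radius $r$ (which is why the conclusion is only for $r<r_0$). This avoids thickening $E$ altogether and preserves the denominator. A secondary, lesser issue is the gluing: the $\tilde\psi_{j,m}$ live in $J$ different rotated coordinate frames, so $\sum_j\eta_j\tilde\psi_{j,m}$ is not a well-defined global graph; the references handle this through a more careful construction (e.g., smoothing a defining function or reconciling overlapping charts), which your sketch elides.
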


\vspace{0.3 cm}

\begin{proof}[Proof of Theorem \ref{teo1INTRO}.] The thesis formally follows from an application of Theorem \ref{stimaC^2}. However, in our setup, lack of regularity of the weak solution $\boldsymbol u$ and of the domain $\Omega$ preclude a direct application of the previous results and need a regularization argument involving the domain and the right hand side of the system.
\vspace{0.1 cm}

\textbf{Dirichlet case.} First of all, we analyze the Dirichlet problem \eqref{system1} + \eqref{Dir_cond}. We split the proof into several steps.

\textit{Step $1.$} Here, we assume 
\begin{equation}\label{bordoregolare}
   \partial \Omega\in C^2,
\end{equation}
\vspace{0.05cm}
\begin{equation}\label{reg_f}
    f \in C^\infty(\Omega).
\end{equation}
Given $\varepsilon \in (0,1)$, we consider a weak solution $\boldsymbol{u_\varepsilon}$ to the approximating system
\beq\label{system33}\tag{$p$-$D_\varepsilon$}
\begin{cases}
-\operatorname{\bf div}((\varepsilon+|D{\boldsymbol u_\varepsilon}|^2)^\frac{p-2}{2}D{\boldsymbol u_\varepsilon})=   {\boldsymbol f}(x)& \mbox{in $\Omega$}\\
\boldsymbol u_\varepsilon = 0  & \mbox{on  $\partial \Omega$}.
\end{cases}
\eeq
By \cite[Theorem $2.1$]{Cma2}, we remark that 
\begin{equation}\label{linfinito}
    \|D\boldsymbol{u}_\varepsilon\|_{L^{\infty}(\Omega)}\leq C(p,n,|\Omega|,\|\operatorname{tr}\mathcal{B}\|_{L^{n-1,1}(\Omega)},\|\boldsymbol f\|_{L^q(\Omega)}),
\end{equation}
where $C(p,n,|\Omega|,\|\operatorname{tr}\mathcal{B}\|_{L^{n-1,1}(\Omega)},\|\boldsymbol f\|_{L^q(\Omega)})$ is a positive constant, with $q>n$. As shown in the proof of that theorem $\boldsymbol{u_\varepsilon}\in W^{1,2}_0(\Omega)\cap W^{1,\infty}(\Omega)\cap W^{2,2}(\Omega)$, and by standard approximation, see \cite[Chapter $2$, Theorem $1$]{Bur}, there exists a sequence $\{\boldsymbol u_k\}\subset C^{\infty}(\Omega)\cap C^2(\overline \Omega)$, satisfying $\boldsymbol{u}_k=0$ on $\partial \Omega$ for $k\in \mathbb{N}$, and: 
\begin{equation*}
\boldsymbol{u}_k\rightarrow\boldsymbol{u}_\varepsilon \quad \text{in } W_0^{1,2}(\Omega), \quad \boldsymbol{u}_k\rightarrow\boldsymbol{u}_\varepsilon \quad \text{in } W^{2,2}(\Omega), \quad D\boldsymbol u_k\rightarrow D\boldsymbol u_\varepsilon \quad \text{a.e. in }\Omega,
\end{equation*}
as $k \rightarrow \infty$. Moreover,
\begin{equation}\label{bound_unif_k}
\|D\boldsymbol{u}_k\|_{L^{\infty}(\Omega)}\leq C\|D\boldsymbol{u}_\varepsilon\|_{L^{\infty}(\Omega)},
\end{equation}
for some constants $C$ independent on $k$.
From Theorem \ref{stimaC^2}, using the estimate \eqref{abcdefDir}, with $\boldsymbol{u}=\boldsymbol{u}_k$, we get 
\begin{equation}\label{abcdefg}
    \begin{split}
        &\int_\Omega(\varepsilon +|D\boldsymbol{u}_k|^2)^{\frac{p-2-\alpha}{2}} \|D^2\boldsymbol{u}_k\|^2 \, dx \\ & \leq \mathcal{C}\left(\int_\Omega{(\varepsilon +|D\boldsymbol{u}_k|^2)^{\frac{p-\alpha}{2}}} \, dx\right.  \\&  \qquad\left.+\sum_{i=1}^n\int_\Omega \langle \operatorname{\bf div}( (\varepsilon+|D{\boldsymbol u}_k|^2)^{\frac{p-2}{2}}D{\boldsymbol u}_k), \partial _{x_i}\left(\frac{\boldsymbol{u}_{k,i}}{(\varepsilon+|D{\boldsymbol u}_k|^2)^{\frac{\alpha}{2}}}\right)\rangle\,dx\right.\\
        &\qquad\left. +\int_\Omega |\operatorname{\bf div}( (\varepsilon+|D{\boldsymbol u_k}|^2)^{\frac{p-2}{2}}D{\boldsymbol u_k})| |D{\boldsymbol u_k}|^{1-\alpha} \,dx \right),
    \end{split}
\end{equation}
where $\mathcal{C}(n,N,p,\alpha,L_\Omega,d_\Omega,K_{\Omega})$ is a positive constant.

By the dominated convergence theorem, recalling $-\operatorname{\bf div}((\varepsilon+|D{\boldsymbol u_\varepsilon}|^2)^\frac{p-2}{2}D{\boldsymbol u_\varepsilon})=   {\boldsymbol f}(x)$, and by \eqref{bound_unif_k}, we deduce
\begin{equation}\label{aia}
    \begin{split}
        &\int_\Omega(\varepsilon +|D\boldsymbol{u}_\varepsilon|^2)^{\frac{p-2-\alpha}{2}} \|D^2\boldsymbol{u}_\varepsilon\|^2 \, dx \\ & \leq \mathcal{C}\left(\int_\Omega{(\varepsilon +|D\boldsymbol{u}_\varepsilon|^2)^{\frac{p-\alpha}{2}}} \, dx\right.\\
        &\qquad \left. + \sum_{i=1}^n \int_{\Omega} \langle \boldsymbol{f}, \partial_{x_i}\left(\frac{\boldsymbol{u}_{\varepsilon,i}}{(\varepsilon+|D{\boldsymbol u}_\varepsilon|^2)^{\frac{\alpha}{2}}}\right)\rangle\,dx\right.\\
        & \qquad\left. +\int_\Omega |\boldsymbol f| |D{\boldsymbol u_\varepsilon}|^{1-\alpha} \,dx \right).
    \end{split}
\end{equation}
For $\varepsilon\in (0,1)$ fixed, let us define the function $\boldsymbol{g}_\varepsilon \in W^{1,2}(\Omega) \cap L^\infty(\Omega)$:
\begin{equation*}
    \boldsymbol{g}_\varepsilon = \frac{\boldsymbol{u}_{\varepsilon,i}}{(\varepsilon+|D{\boldsymbol u}_\varepsilon|^2)^{\frac{\alpha}{2}}}
\end{equation*}
Again by \cite{Bur}, there exists a sequence $\{\boldsymbol{g}_s\} \subset C^\infty(\Omega) \cap C(\overline \Omega)$  such that $\boldsymbol{g}_s \rightarrow \boldsymbol{g}_\varepsilon$ as $s \rightarrow \infty$. Moreover,
\begin{equation}\label{g_linf}
    \|\boldsymbol{g}_s\|_{L^\infty(\Omega)}\leq C \|\boldsymbol{g}_\varepsilon\|_{L^\infty(\Omega)},
\end{equation}
for some constant $C$ independent on $s$.

To proceed, we treat the second term in the right hand side of \eqref{aia} separately:
\begin{equation*}
    G:=\sum_{i=1}^n \int_{\Omega} \langle \boldsymbol{f}, \partial_{x_i}\left(\frac{\boldsymbol{u}_{\varepsilon,i}}{(\varepsilon+|D{\boldsymbol u}_\varepsilon|^2)^{\frac{\alpha}{2}}}\right)\rangle\,dx = \lim_{s \rightarrow\infty} \sum_{i=1}^n \int_{\Omega} \langle \boldsymbol{f}, \partial_{x_i} \boldsymbol{g}_s\rangle\,dx,
\end{equation*}
where in the previous computation we used the dominated convergence theorem. Integrating by parts, it follows that
\begin{equation*}
\begin{split}
    \lim_{s \rightarrow\infty} \sum_{i=1}^n \int_{\Omega} \langle \boldsymbol{f}, \partial_{x_i} \boldsymbol{g}_s\rangle\,dx=
    &-\lim_{s \rightarrow\infty} \sum_{i=1}^n \int_{\Omega} \langle\partial_{x_i} \boldsymbol{f},  \boldsymbol{g}_s\rangle\,dx\\
    &+ \lim_{s \rightarrow\infty}\sum_{i=1}^n \int_{\partial \Omega} \langle \boldsymbol{f}, \boldsymbol{g}_s \rangle \nu_i \,d\mathcal{H}^{n-1}.
\end{split}
\end{equation*}
Thus, by \eqref{linfinito} and \eqref{g_linf},
\begin{equation}\label{bound_G}
\begin{split}
    |G| &\leq \sum_{i=1}^n \int_{\Omega} |\langle\partial_{x_i} \boldsymbol{f},  \boldsymbol{g}_\varepsilon \rangle|\,dx + \tilde{C} \int_{\partial\Omega} |\boldsymbol{f}| \,d\mathcal{H}^{n-1}\\
    &\leq \tilde{C} \left (\int_{\Omega} |D\boldsymbol{f}|\, dx+ \int_{\partial\Omega} |\boldsymbol{f}| \,d\mathcal{H}^{n-1} \right),
\end{split}
\end{equation}
with $\tilde{C}=\tilde{C}(p,n,|\Omega|,\|\operatorname{tr}\mathcal{B}\|_{L^{n-1,1}(\Omega)},\|\boldsymbol f\|_{L^q(\Omega)},\alpha)$.

By \eqref{bound_G} and a trace inequality in Lipschitz domains, see \cite{Nec}, we can rewrite \eqref{aia} as follows:
\begin{equation}\label{aia2}
    \begin{split}
        &\int_\Omega(\varepsilon +|D\boldsymbol{u}_\varepsilon|^2)^{\frac{p-2-\alpha}{2}} \|D^2\boldsymbol{u}_\varepsilon\|^2 \, dx \\ 
        & \leq \mathcal{C}\left(\int_\Omega{(\varepsilon +|D\boldsymbol{u}_\varepsilon|^2)^{\frac{p-\alpha}{2}}} \, dx\right. \\
        &\qquad \left. + \|\boldsymbol f\|_{L^1(\Omega)} +  \|\boldsymbol f\|_{W^{1,1}(\Omega)} \right),
    \end{split}
\end{equation}
where $\mathcal{C}=\mathcal{C}(n,N,p,\alpha,L_\Omega,d_\Omega,K_{\Omega},\|\boldsymbol f\|_{L^q(\Omega)},|\Omega|,\|\operatorname{tr}\mathcal{B}\|_{L^{n-1,1}(\Omega)})$ is a positive constant.

We notice that, for $\gamma \geq \frac{p-\alpha}{2}$, by \eqref{aia2} and \eqref{linfinito} we arrive to
\begin{equation}\label{second_deriv_eps}
    \int_{\Omega} (\varepsilon + |D \boldsymbol u_\varepsilon |^2)^{\gamma-1} \|D^2 \boldsymbol u_\varepsilon \|^2 \leq \mathcal{C},
\end{equation}
where $\mathcal{C}=\mathcal{C}(n,N,p,\alpha,L_\Omega,d_\Omega,K_{\Omega},|\Omega|,\|\operatorname{tr}\mathcal{B}\|_{L^{n-1,1}(\Omega)},\|\boldsymbol f\|_{L^q(\Omega)},\|\boldsymbol f\|_{W^{1,1}(\Omega)})$.

\vspace{0.5 cm}

A direct computation shows that
\begin{equation}\label{W12ineq}
    \left |\nabla ((\varepsilon +|D\boldsymbol{u}_\varepsilon|^2)^{\frac{\gamma-1}{2}} u_{\varepsilon,x_i}^\beta) \right | \leq C (\varepsilon +|D\boldsymbol{u}_\varepsilon|^2)^{\frac{\gamma-1}{2}} \| D^2 \boldsymbol u_\varepsilon \| \quad \text{a.e. in } \Omega,
\end{equation}
for $i=1,...,n$, $\beta=1,...,N$ and for some positive constant $C=C(p,\alpha,n,N)$.

By \eqref{W12ineq} and \eqref{linfinito}, one can infer
\begin{equation}\label{bound_quasi_stress_field}
\left \|(\varepsilon +|D\boldsymbol{u}_\varepsilon|^2)^{\frac{\gamma-1}{2}} D \boldsymbol u_\varepsilon \right \|_{W^{1,2}(\Omega)}\leq C,
\end{equation}
where $C=C(n,N,p,\alpha,L_\Omega,d_\Omega,\Psi_{\Omega},|\Omega|,\|\operatorname{tr}\mathcal{B}\|_{L^{n-1,1}(\Omega)},\|\boldsymbol f\|_{L^q(\Omega)},\|\boldsymbol f\|_{W^{1,1}(\Omega)})$ is a positive constant. Notice that the dependence of the constant by $\Psi_\Omega$ is due to Lemma \ref{antoninho2}.
We remark that the function $\Psi_\Omega$ has finite value by Remark \ref{antony2}.

The uniform boundedness in $W^{1,2}(\Omega)$ implies that there exists a function $\boldsymbol V \in W^{1,2}(\Omega)$ such that, up to a subsequence,
\begin{equation}\label{weakconv}
    (\varepsilon +|D\boldsymbol{u}_\varepsilon|^2)^{\frac{\gamma-1}{2}} D \boldsymbol u_\varepsilon \rightarrow \boldsymbol V \text{ in } L^2(\Omega), \quad (\varepsilon +|D\boldsymbol{u}_\varepsilon|^2)^{\frac{\gamma-1}{2}} D \boldsymbol u_\varepsilon \rightharpoonup \boldsymbol V \text{ in } W^{1,2}(\Omega),
\end{equation}
as $\varepsilon \rightarrow 0$.\\
Moreover, by \cite[Equation $(5.41)$]{Cma},
  \begin{equation}\label{Lpstress}
      D \boldsymbol u_\varepsilon\rightarrow D\boldsymbol u \quad\text{in } L^p(\Omega).
  \end{equation}
From equations \eqref{weakconv} and \eqref{Lpstress}, we deduce
\begin{equation}\label{stress_C2}
    |D \boldsymbol u|^{\gamma-1} D \boldsymbol u = \boldsymbol V \in W^{1,2}(\Omega),
\end{equation}
for $\gamma \geq \frac{p-\alpha}{2}$.

\textit{Step $2$.} Here, we remove assumption \eqref{bordoregolare}.\\
We consider a sequence of open sets $\{\Omega_m\}$ approximating $\Omega$ in the sense of Lemma \ref{approxcap}. For $m\in \N$, let $\boldsymbol{u}_m$ be the weak solution to the Dirichlet problem 
\beq\label{system34}\tag{$p$-$D_m$}
\begin{cases}
-\operatorname{\bf div}({|D{\boldsymbol u_m}|}^{p-2}D{\boldsymbol u_m})=   {\boldsymbol f}_m(x)& \mbox{in $\Omega_m$}\\
\boldsymbol u_m= 0  & \mbox{on  $\partial \Omega_m$},
\end{cases}
\eeq

where $\boldsymbol f_m\in C^{\infty}_c(\R^n;\R^N)$. Here, $\boldsymbol{f}_m$
  is constructed by extending $\boldsymbol{f}$ to the whole $\R^n$, since $f$ belongs to the Sobolev space, and then using classical mollifiers and standard cut-off functions. With this choice $\boldsymbol f_m$ restricted to $\Omega$ converges to $\boldsymbol f$ in the norm $W^{1,1}(\Omega)\cap L^q(\Omega).$ Moreover $$\|\boldsymbol{f}_m\|_{W^{1,1}(\Omega_m)}\leq C\|\boldsymbol{f}\|_{W^{1,1}(\Omega)}\quad\text{and}\quad \|\boldsymbol{f}_m\|_{L^{q}(\Omega_m)}\leq C \|\boldsymbol{f}\|_{L^{q}(\Omega)},$$
where $C$ is a positive constant independent on $m$.
By inequality \eqref{bound_quasi_stress_field}, applied to $\boldsymbol{u}_m$, for $\gamma \geq \frac{p-\alpha}{2}$, and \eqref{stress_C2} we get
\begin{equation}\label{stress_field_u_m}
\left \||D\boldsymbol{u}_m|^{\gamma-1} D \boldsymbol u_m \right \|_{W^{1,2}(\Omega)}\leq \left \||D\boldsymbol{u}_m|^{\gamma-1} D \boldsymbol u_m \right \|_{W^{1,2}(\Omega_m)} \leq C,
\end{equation}
where $C(n,N,p,\alpha,L_\Omega,d_\Omega,\Psi_{\Omega},\|\operatorname{tr}\mathcal{B}\|_{L^{n-1,1}(\Omega)}\| \boldsymbol f\|_{L^q(\Omega)},\|\boldsymbol f\|_{W^{1,1}(\Omega)})$ is a positive constant. Note that this dependence of the constant $\mathcal{C}$ is guaranteed by proprieties \eqref{prop1} and \eqref{prop2} of the sequence $\{\Omega_m\}$, by the convergence of $\boldsymbol f_m$ to $\boldsymbol{f}$, and by \cite[Step $5$]{Cma2}.

By \eqref{stress_field_u_m}, we deduce that there exists $\boldsymbol{V} \in W^{1,2}(\Omega)$ such that, up to a subsequence 
\begin{equation}\label{weakconv_m}
    |D\boldsymbol{u}_m|^{\gamma-1} D \boldsymbol u_m \rightarrow \boldsymbol V \text{ in } L^2(\Omega), \quad |D\boldsymbol{u}_m|^{\gamma-1} D \boldsymbol u_m \rightharpoonup \boldsymbol V \text{ in } W^{1,2}(\Omega),
\end{equation}

Next, we claim that, for every open set $\tilde \Omega\subset\subset\Omega$, there exist a constant $C$, independent of $m$, such that 
\begin{equation}\label{Farinatop2s}
    \|\boldsymbol{u}_m\|_{C^{1,\alpha}(\tilde \Omega)}\leq C.
\end{equation}

By \cite[Corollary 5.5]{DiKaSc}, we deduce that 
\begin{equation}\label{fatto1s}
    \|D\boldsymbol{u}_m\|_{C^{0,\alpha}(\tilde\Omega)}\leq C(\|\boldsymbol{
    f}_m\|_{C^{0,\alpha}(\tilde \Omega)}),
\end{equation}
where $C(\|\boldsymbol{
    f}_m\|_{C^{0,\alpha}(\tilde \Omega)})$ is a positive constant independent of $m$. Using $\boldsymbol{u}_m$ as test function in  \eqref{system34}, and the regularity of $\boldsymbol{f}_m,\boldsymbol{f} \in L^{(p^*)'}$, where $(p^*)'$ denotes the conjugate exponent of $p^*$, yields 
\begin{equation}\label{uniformemLp}
    \|D\boldsymbol{u}_m\|_{L^p(\Omega_m)}\leq C,
\end{equation}
  for some constant $C$ independent of $m$. Thus, by the Poincaré inequality, 
  \begin{equation}\label{fatto2s}
    \|\boldsymbol{u}_m\|_{L^p(\Omega_m)}\leq C,
\end{equation}
where $C$ is a positive constant independent of $m$. By \eqref{fatto1s} and \eqref{fatto2s}, using a Sobolev type inequality, we have 
\begin{equation*}
    \|\boldsymbol{u}_m\|_{L^{\infty}(\tilde{\Omega})}\leq C,
\end{equation*}
for some constant $C$ independent of $m$. The claim follows by $C^{1,\alpha}$ regularity, see \cite{ChenDiB,DP}.

Therefore, by \eqref{Farinatop2s}, up to subsequence
\begin{equation}\label{strongW12s}
    \boldsymbol{u}_m\rightarrow \boldsymbol{v} \quad \text{in } C^{1,\alpha}(\tilde\Omega).
\end{equation}
In particular, by \eqref{weakconv_m} and using a diagonal procedure,
\begin{equation}\label{stress_strong}
    |D\boldsymbol{u}_m|^{\gamma-1} D \boldsymbol u_m \rightarrow |D\boldsymbol{v}|^{\gamma-1} D \boldsymbol v \quad \text{in } \Omega.
\end{equation}

Now, take as test function $\boldsymbol \varphi \in C_c^\infty(\Omega)$ (extended by $0$ outside $\Omega$) in the weak formulation of \eqref{system34}, and pass to the limit as $m \rightarrow \infty$ in the resulting equation, obtaining
\begin{equation}\label{limit_Dvs}
    \int_{\Omega}|D \boldsymbol{v}|^{p-2}(D\boldsymbol{v}:D\boldsymbol{\varphi})\,dx=\int_\Omega \langle\boldsymbol{f},\boldsymbol{\varphi}\rangle\,dx.
\end{equation}
By \eqref{fatto2s}, we can ensure that $|D \boldsymbol{v}|^{p-2} D\boldsymbol{v} \in L^{p'}(\Omega)$. Thus, by a density argument, equation \eqref{limit_Dvs} holds for every $\boldsymbol \varphi \in W^{1,p}_0(\Omega)$. This implies that $\boldsymbol v$ is a weak solution of \eqref{system1} + \eqref{Dir_cond} and by its uniqueness $\boldsymbol v=\boldsymbol u$.

\textit{Step $3$.} The last step consists in removing assumption \eqref{reg_f}. Let $\boldsymbol{f} \in W^{1,1}(\Omega) \cap L^q(\Omega)$, with $q > n$. By standard density argument one can infer that there exists a sequence $\{\boldsymbol{f}_k\} \subset C^\infty(\overline\Omega)$ (as in Step 2) such that
\begin{equation}\label{conv_f}
    \boldsymbol{f}_k \rightarrow \boldsymbol{f} \quad \text{in } W^{1,1}(\Omega) \cap L^q(\Omega).
\end{equation}

To proceed we consider a sequence $\{\boldsymbol{u}_k\}$ of weak solutions to the following system
\beq\label{system_step3}
\begin{cases}
-\operatorname{\bf div}({|D{\boldsymbol u_k}|}^{p-2}D{\boldsymbol u_k})=   {\boldsymbol f_k}& \mbox{in $\Omega$}\\
\boldsymbol u_k= 0  & \mbox{on  $\partial \Omega$}.
\end{cases}
\eeq

By inequality \eqref{stress_field_u_m} of \textit{Step $2$} applied to $\boldsymbol u_k$ we deduce
\begin{equation}\label{stress_field_u_k}
    \begin{split}
&\left \||D\boldsymbol{u}_k|^{\gamma-1} D \boldsymbol u_k \right \|_{W^{1,2}(\Omega)}\\
&\quad\leq C(n,N,p,\alpha,L_\Omega,d_\Omega,\Psi_{\Omega},\|\operatorname{tr}\mathcal{B}\|_{L^{n-1,1}(\Omega)}\| \boldsymbol f_k\|_{L^q(\Omega)},\|\boldsymbol f_k\|_{W^{1,1}(\Omega)})\\
&\quad\leq \bar C(n,N,p,\alpha,L_\Omega,d_\Omega,\Psi_{\Omega},\|\operatorname{tr}\mathcal{B}\|_{L^{n-1,1}(\Omega)}\| \boldsymbol f\|_{L^q(\Omega)},\|\boldsymbol f\|_{W^{1,1}(\Omega)}),
\end{split}
\end{equation}
where the last inequality follows by \eqref{conv_f}.

We claim that:
\begin{equation}\label{convergenzalp}
    D\boldsymbol{u}_k \rightarrow D\boldsymbol{u} \quad \text{in} \quad L^p(\Omega).
\end{equation}
We note that a basic energy estimate yields 
\begin{equation}\label{Lplimitato}
    \|D\boldsymbol{u}_k\|_{L^p(\Omega)} \leq C,
\end{equation}
with $C$ not depending on $k$.
Now, testing \eqref{system_step3} and \eqref{system1} + \eqref{Dir_cond} with $\boldsymbol{u}_k -\boldsymbol{u}$ and subtracting the resulting terms, we obtain:
\begin{equation}\label{bdd_uk}
\begin{split}
    \int_\Omega &(|D\boldsymbol{u}_k|^{p-2}+|D\boldsymbol{u}|^{p-2})|D \boldsymbol{u}_k-D\boldsymbol{u}|^2 \, dx\\
    &\leq C(n,p)\left ( \int_\Omega (|\boldsymbol{u}_k|^{p*}+|\boldsymbol{u}|^{p*})\,dx \right)^{\frac{1}{p*}} \left ( \int_\Omega|\boldsymbol{f}_k-\boldsymbol{f}|^{(p*)'} \,dx\right)^{\frac{1}{(p*)'}}
\end{split}
\end{equation}

By \eqref{bdd_uk} and \eqref{Lplimitato}, the claim \eqref{convergenzalp} follows (see for details \cite[Section 4]{Cma}).

Summing up, using \eqref{stress_field_u_k} and by claim \eqref{convergenzalp}, we have
\begin{equation*}
    |Du|^{\gamma-1}Du \in W^{1,2}(\Omega).
\end{equation*}

\vspace{0.5 cm}

\textbf{Neumann case.} Similarly, we can prove the same results for the Neumann problem. In what follows, we mention the main changes required in each step.

\textit{Step $1.$} In this framework we take $\boldsymbol u_\varepsilon$, normalized by its mean, as the weak solution of the following Neumann system:
\beq\label{system33N}\tag{$p$-$N_\varepsilon$}
\begin{cases}
-\operatorname{\bf div}((\varepsilon+|D{\boldsymbol u_\varepsilon}|^2)^\frac{p-2}{2}D{\boldsymbol u_\varepsilon})=   {\boldsymbol f}(x)& \mbox{in $\Omega$}\\
{\frac{\partial \boldsymbol u_\varepsilon}{\partial \boldsymbol \nu}} = 0  & \mbox{on  $\partial \Omega$}.
\end{cases}
\eeq
We recover the estimate \eqref{linfinito} by \cite[Theorem $2.4$]{Cma2}. As shown in the proof of that theorem, $\boldsymbol{u_\varepsilon}\in W^{1,\infty}(\Omega)\cap W^{2,2}(\Omega)$, and there exists a sequence $\{\boldsymbol u_k\}\in C^{\infty}(\Omega)\cap C^2(\overline \Omega)$, such that:
\begin{equation*}
\begin{split}
    &\frac{\partial \boldsymbol u_k}{\partial \boldsymbol \nu }= 0 \quad \text{on } \partial \Omega,\\
    &\boldsymbol{u}_k\rightarrow\boldsymbol{u}_\varepsilon \quad \text{in } W^{2,2}(\Omega), \quad D\boldsymbol u_k\rightarrow D\boldsymbol u_\varepsilon \quad \text{a.e. in }\Omega,\\
    &\|D\boldsymbol{u}_k\|_{L^{\infty}(\Omega)}\leq C\|D\boldsymbol{u}_\varepsilon\|_{L^{\infty}(\Omega)},
\end{split}
\end{equation*}
for some constant $C$ independent on $k$. From Theorem \ref{stimaC^2}, using estimate \eqref{abcdef} with $\boldsymbol{u}=\boldsymbol{u}_k$, we get

\begin{equation}\label{abcdefghh}\begin{split}
&\int_\Omega(\varepsilon +|D\boldsymbol{u}_k|^2)^{\frac{p-2-\alpha}{2}} \|D^2\boldsymbol{u}_k\|^2 \, dx \\ & \leq \mathcal{C}\left(\int_\Omega{(\varepsilon +|D\boldsymbol{u}_k|^2)^{\frac{p-\alpha}{2}}} \, dx\right.  \\&  \qquad\left.+\sum_{i=1}^n\int_\Omega \langle \operatorname{\bf div}( (\varepsilon+|D{\boldsymbol u}_k|^2)^{\frac{p-2}{2}}D{\boldsymbol u}_k), \partial _{x_i}\left(\frac{\boldsymbol{u}_{k,i}}{(\varepsilon+|D{\boldsymbol u}_k|^2)^{\frac{\alpha}{2}}}\right)\rangle\,dx\right).\\
\end{split}
\end{equation}
By dominated convergence theorem and integration by parts, since \eqref{Neu_cond} holds, we deduce
\begin{equation*}
\begin{split}
&\int_\Omega(\varepsilon +|D\boldsymbol{u}_\varepsilon|^2)^{\frac{p-2-\alpha}{2}} \|D^2\boldsymbol{u}_\varepsilon\|^2 \, dx \\ & \leq \mathcal{C}\left(\int_\Omega{(\varepsilon +|D\boldsymbol{u}_\varepsilon|^2)^{\frac{p-\alpha}{2}}} \, dx\right.  \\&  \qquad\left.+\sum_{i=1}^n\int_\Omega \langle\partial_{x_i} \boldsymbol{f},\frac{\boldsymbol{u}_{\varepsilon,i}}{(\varepsilon+|D{\boldsymbol u}_\varepsilon|^2)^{\frac{\alpha}{2}}}\rangle\,dx\right).
\end{split}
\end{equation*}

With these variants, the remaining part follows analogously as in the Dirichlet case.

\textit{Step $2$.} Instead of \eqref{system34} we consider
\beq\label{system34N}\tag{$p$-$N_m$}
\begin{cases}
-\operatorname{\bf div}(|D{\boldsymbol u_m}|^{p-2}D{\boldsymbol u_m})=   {\boldsymbol f}_m(x)& \mbox{in $\Omega_m$}\\
{\frac{\partial \boldsymbol u_m}{\partial \boldsymbol \nu}}= 0  & \mbox{on  $\partial \Omega_m$},
\end{cases}
\eeq
and the corresponding sequence of solutions $\{\boldsymbol u_m\}$ has to be normalized by a suitable sequence of additive constant vectors $\boldsymbol \xi_m = -\int_{\Omega_m} \boldsymbol u_m / |\Omega_m|$. By the same argument, we prove that the normalized sequence $\boldsymbol u_m$ admits a subsequence converging to a function $\boldsymbol v$ with the same properties as in the Dirichlet case.

Now, since any test function $\boldsymbol{\psi}\in W^{1,\infty}(\Omega)$ can be extended to a function in $W^{1,\infty}(\R^n)$, the weak formulation of \eqref{system34N} yields
\begin{equation}\label{weakOmega_m}
    \int_{\Omega_m}|D\boldsymbol{u}_m|^{p-2} (D\boldsymbol{u}_m:D\boldsymbol{\psi})\,dx=\int_{\Omega_m} \langle\boldsymbol{f},\boldsymbol{\psi}\rangle\,dx.
\end{equation}
We notice that the left hand side of \eqref{weakOmega_m} can be written as
\begin{equation}\label{bau}
\begin{split}
&\int_{\Omega_m}|D\boldsymbol{u}_m|^{p-2} (D\boldsymbol{u}_m:D\boldsymbol{\psi})\,dx \\ &=\int_{\Omega}|D\boldsymbol{u}_m|^{p-2} (D\boldsymbol{u}_m:D\boldsymbol{\psi})\,dx+\int_{\Omega\setminus\Omega_m}|D\boldsymbol{u}_m|^{p-2} (D\boldsymbol{u}_m:D\boldsymbol{\psi})\,dx.
\end{split}
\end{equation} 
By the dominated convergence theorem, using \eqref{uniformemLp}, \eqref{strongW12s}, \eqref{bau} and the fact that $|\Omega_m \setminus \Omega| \rightarrow 0$ we deduce
\begin{equation}\label{kappa}
    \int_{\Omega}|D \boldsymbol{v}|^{p-2}(D\boldsymbol{v}:D\boldsymbol{\psi})\,dx=\int_\Omega \langle\boldsymbol{f},\boldsymbol{\psi}\rangle\,dx.
\end{equation}
Since $\Omega$ is a bounded Lipschitz domain, by density, \eqref{kappa} holds for any $\boldsymbol{\psi}\in W^{1,p}(\Omega)$, and therefore $\boldsymbol{v}$ is a weak solution of \eqref{system1} + \eqref{Neu_cond}.

\textit{Step $3$.} The last step is entirely analogous, taking as $\boldsymbol{u}_k$ a sequence of weak solutions of the Neumann system
\begin{equation*}
    \begin{cases}
-\operatorname{\bf div}({|D{\boldsymbol u_k}|}^{p-2}D{\boldsymbol u_k})=   {\boldsymbol f_k}& \mbox{in $\Omega$}\\
\frac{\partial\boldsymbol u_k}{\partial \nu}= 0  & \mbox{on  $\partial \Omega$}.
\end{cases}
\end{equation*}

\vspace{0.5 cm}

\end{proof}

\begin{corollary}\label{u_W2,2}
    Let $\boldsymbol u$, $\Omega$ and $\boldsymbol f$ as in Theorem \ref{teo1INTRO}. If $1<p<3$, then
    \begin{equation*}
        \boldsymbol u \in W^{2,2}(\Omega).
    \end{equation*}
\end{corollary}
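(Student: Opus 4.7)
The plan is to apply Theorem \ref{teo1INTRO} with a specific choice of parameters $(\alpha,\gamma)$ that makes the stress-like weight $|D\boldsymbol u|^{\gamma-1}$ trivial. The crucial observation is that Theorem \ref{teo1INTRO} gives $|D\boldsymbol u|^{\gamma-1}D\boldsymbol u \in W^{1,2}(\Omega)$ for every $\gamma \geq (p-\alpha)/2$; so as soon as $(p-\alpha)/2 \leq 1$ I may take $\gamma = 1$, and the conclusion reduces to $D\boldsymbol u \in W^{1,2}(\Omega)$, which together with $\boldsymbol u \in L^2(\Omega)$ (automatic since $\Omega$ is bounded and $\boldsymbol u \in W^{1,p}(\Omega)$) gives exactly $\boldsymbol u \in W^{2,2}(\Omega)$.

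It therefore remains to exhibit, for every $p \in (1,3)$, an admissible $\alpha$ with $\alpha \geq p-2$ compatible with the constraints imposed in Theorem \ref{teo1INTRO}. For $1 < p \leq 2$, I would simply take $\alpha = 0$: the inequality $\alpha \geq p-2$ is immediate, while $\alpha < h(p)$ (when $p<3/2$) holds because $h(p)>0$ on $(1,3/2)$, and $\alpha < 1$ is trivial when $3/2 \leq p \leq 2$. For $2 < p < 3$, the interval $[p-2,\,1)$ is non-empty since $p-2 < 1$, and any $\alpha$ picked from this interval meets the requirement $\alpha < 1$ of Theorem \ref{teo1INTRO} (which is the only constraint since $p \geq 3/2$).

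With such $\alpha$ fixed, plugging $\gamma = 1 \geq (p-\alpha)/2$ into Theorem \ref{teo1INTRO} yields $D\boldsymbol u \in W^{1,2}(\Omega)$, whence $\boldsymbol u \in W^{2,2}(\Omega)$. I do not expect any genuine obstacle here: the corollary amounts to a bookkeeping consequence of the admissible range of parameters established in Theorem \ref{teo1INTRO}, the only mild subtlety being the verification that the threshold $\alpha \geq p-2$ can be reconciled with the upper bound $\alpha < 1$ precisely when $p < 3$, which delineates the range of $p$ in the statement.
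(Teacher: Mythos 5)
Your proposal is correct and takes essentially the same route as the paper, which simply invokes Theorem~\ref{teo1INTRO} with $\gamma = 1$. You add the (worthwhile but routine) bookkeeping of exhibiting an admissible $\alpha$ with $p-2 \le \alpha$ compatible with the constraints $\alpha < h(p)$ (for $p<3/2$) or $\alpha < 1$ (for $p \ge 3/2$), which is what delimits the range $1<p<3$.
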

\begin{proof}
    The thesis follows by an application of Theorem \ref{teo1INTRO} with $\gamma =1$.
\end{proof}

\vspace{0.3 cm}

We conclude this section analyzing the special case of convex domains. 

\begin{proof}[Proof of Theorem \ref{conv_d}]
    The proof is similar to that of Theorem \ref{teo1INTRO}. The only difference relies on choosing, in \textit{Step $2$}, a sequence $\Omega_m$ of bounded convex open sets approximating $\Omega$ from outside with respect to the Hausdorff distance. Furthermore, conditions \eqref{prop1} are automatically fulfilled with convex domains and condition \eqref{prop2} does not play any role since the constant $\mathcal{C}$ in inequalities \eqref{abcdeh} and \eqref{abcdefgh} is independent of $K_\Omega$.
\end{proof}

\section{Integrability of the inverse of the gradient}\label{integrability_section}

We stress that the  results proved in Section \ref{SOE} hold without any sign assumption on the source term $\boldsymbol{f}$. Providing, in addition, that $f^\beta \geq \tau > 0$ or $f^\beta \leq -\tau < 0$ locally for some $\beta$, we deduce, as a consequence of Theorem \ref{teo1INTRO}, the integrability properties of the inverse of the weight $|D \boldsymbol{u}|^{p-2}$. The next lemma provides a global integral estimate for regular functions and smooth domains:
\begin{lemma}\label{ineq_peso_u}
    Let $\Omega$ be a bounded open set in $\R^n$, with $\partial \Omega \in C^2$.\\
    Then, for some $\beta = 1,...,N$, and for any $\theta,\sigma > 0$, we have:
    \begin{equation}\label{stimaprepeso}
   	\begin{split}
   &\left|\int_{\Omega}\operatorname{ div}((\varepsilon+{|D{\boldsymbol u}|)}^\frac{p-2}{2}{\nabla{u}^\beta})\frac{\psi^2}{(\varepsilon+{|D{\boldsymbol u}|)}^\frac{\sigma}{2}}\,dx\right|
        \\
   		&\leq \int_{\Omega} {(\varepsilon + |D \boldsymbol{u}|^2)}^{\frac{p-1-\sigma}{2}}|\nabla \psi|^2 \,dx + \int_{\Omega} {(\varepsilon + |D \boldsymbol{u}
   			|^2)}^{\frac{p-1-\sigma}{2}} \psi^2 \,dx\\
   		&\qquad+ \theta\sigma \int_{\Omega} \frac{\psi^2 }{(\varepsilon + |D \boldsymbol{u}|^2)^{\frac{\sigma}{2}}}\,dx + \frac{\sigma}{4\theta}\int_{\Omega}{(\varepsilon + |D \boldsymbol{u}|^2)}^{p-2-\frac{\sigma}{2}}\|D\boldsymbol{u}\|^2\psi^2\,dx,
   	\end{split}
   \end{equation}
    for any vector field $\boldsymbol{u} \in C^3(\Omega) \cap C^2(\bar \Omega)$ that satisfies either \eqref{Dir_cond} or \eqref{Neu_cond} on $\partial \Omega$ and for any $\psi\in C^{\infty}_c(\R^n)$.
\end{lemma}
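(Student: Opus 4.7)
The plan is to prove Lemma \ref{ineq_peso_u} by a single integration by parts followed by two applications of Young's inequality. Setting $w := \varepsilon + |D\boldsymbol{u}|^2$ (reading the $(\varepsilon + |D\boldsymbol{u}|)^{(p-2)/2}$ inside the divergence on the left-hand side as the natural $(\varepsilon+|D\boldsymbol{u}|^2)^{(p-2)/2}$, to match the right-hand side), and taking $\phi := \psi^2/w^{\sigma/2}$ as a scalar test function in $C^2(\overline{\Omega})$, one writes
\begin{equation*}
    \int_\Omega \operatorname{div}\!\bigl(w^{(p-2)/2}\nabla u^\beta\bigr)\,\phi\,dx = -\int_\Omega w^{(p-2)/2}\nabla u^\beta\!\cdot\!\nabla\phi\,dx + \int_{\partial\Omega} w^{(p-2)/2}\,\tfrac{\partial u^\beta}{\partial\boldsymbol\nu}\,\phi\,d\mathcal{H}^{n-1}.
\end{equation*}
Under \eqref{Neu_cond} the boundary trace vanishes immediately because $\partial u^\beta/\partial\boldsymbol\nu = 0$; under \eqref{Dir_cond}, since $\nabla u^\beta$ is parallel to $\boldsymbol\nu$ on $\partial\Omega$, the trace requires a separate dispatch -- either by first proving the estimate for $\psi\in C_c^\infty(\Omega)$ and extending by density, or by invoking the boundary tools of Lemma \ref{antoninho} together with a trace absorption into the bulk terms already on the right-hand side.

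Once reduced to the bulk integral, I would expand
\begin{equation*}
    \nabla\phi = \frac{2\psi\nabla\psi}{w^{\sigma/2}} - \frac{\sigma\,\psi^2\,\nabla w}{2\,w^{\sigma/2+1}},
\end{equation*}
which splits the integral into two pieces, call them $T_1$ and $T_2$. For $T_1$, using the crude bound $|\nabla u^\beta|\le|D\boldsymbol{u}|\le w^{1/2}$ together with the elementary inequality $2ab\le a^2+b^2$ yields
\begin{equation*}
    |T_1|\le 2\int_\Omega w^{(p-1-\sigma)/2}|\psi||\nabla\psi|\,dx \le \int_\Omega w^{(p-1-\sigma)/2}|\nabla\psi|^2\,dx + \int_\Omega w^{(p-1-\sigma)/2}\psi^2\,dx,
\end{equation*}
which reproduces exactly the first two contributions on the right-hand side of \eqref{stimaprepeso}.

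For $T_2$, the point is to convert $\nabla w$ into a second-order derivative of $\boldsymbol{u}$. Using $|\nabla w| = 2|D^2\boldsymbol{u}\,D\boldsymbol{u}| \le 2|D\boldsymbol{u}|\,\|D^2\boldsymbol{u}\|$ from \eqref{eq:FraCo}, together with $|\nabla u^\beta|\le w^{1/2}$, one gets $|T_2|\le \sigma\int_\Omega w^{(p-\sigma)/2-1}\|D^2\boldsymbol{u}\|\,\psi^2\,dx$; writing the integrand as $\bigl(\psi/w^{\sigma/4}\bigr)\cdot\bigl(\psi\,w^{(p-2-\sigma/2)/2}\|D^2\boldsymbol{u}\|\bigr)$ and applying Young's inequality with the free parameter $\theta$ produces
\begin{equation*}
    |T_2| \le \theta\sigma\int_\Omega\frac{\psi^2}{w^{\sigma/2}}\,dx + \frac{\sigma}{4\theta}\int_\Omega w^{p-2-\sigma/2}\,\|D^2\boldsymbol{u}\|^2\,\psi^2\,dx,
\end{equation*}
matching the last two terms of \eqref{stimaprepeso} (reading the printed $\|D\boldsymbol{u}\|^2$ as $\|D^2\boldsymbol{u}\|^2$, which is forced by the derivation). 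The main obstacle is the Dirichlet boundary trace: there is no boundary term on the right-hand side of \eqref{stimaprepeso}, so one must verify that the trace $\int_{\partial\Omega} w^{(p-2)/2}(\partial u^\beta/\partial\boldsymbol\nu)\psi^2/w^{\sigma/2}$ either enjoys a favorable sign consistent with the absolute value on the left, or can be absorbed into the bulk via a trace inequality of the type used in the proof of Theorem \ref{stimaC^2}. All remaining steps -- the two Young's inequalities and the pointwise bounds on $|\nabla u^\beta|$ and $|\nabla w|$ -- are routine.
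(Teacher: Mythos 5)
Your decomposition into $T_1$ and $T_2$, the pointwise bounds $|\nabla u^\beta|\le(\varepsilon+|D\boldsymbol{u}|^2)^{1/2}$ and $|\nabla(\varepsilon+|D\boldsymbol{u}|^2)|\le 2(\varepsilon+|D\boldsymbol{u}|^2)^{1/2}\|D^2\boldsymbol{u}\|$, and the two applications of Young's inequality reproduce the paper's proof exactly; your reading of the two misprints ($(\varepsilon+|D\boldsymbol{u}|)$ for $(\varepsilon+|D\boldsymbol{u}|^2)$, and $\|D\boldsymbol{u}\|^2$ for $\|D^2\boldsymbol{u}\|^2$ in the final term) is forced by the derivation, and the paper's own display \eqref{disu4} carries the same slip. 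The one point you leave open, the Dirichlet boundary trace, the paper resolves by the favorable-sign option you list: since $u^\beta=0$ on $\partial\Omega$, the tangential gradient vanishes, so $\nabla u^\beta$ is parallel to $\boldsymbol{\nu}$, and the paper writes $\boldsymbol{\nu}=\nabla u^\beta/|\nabla u^\beta|$ (when $\nabla u^\beta\neq 0$), giving $\langle\nabla u^\beta,\boldsymbol{\nu}\rangle=|\nabla u^\beta|\ge 0$. The boundary contribution to $-\int_\Omega\operatorname{div}(\cdot)\,\phi^\beta\,dx$ after integration by parts is then $-\int_{\partial\Omega}(\varepsilon+|D\boldsymbol{u}|^2)^{(p-2)/2}|\nabla u^\beta|\,\phi^\beta\,d\mathcal{H}^{n-1}\le 0$ and is simply discarded, establishing the ``negative-sign'' half of \eqref{stimaprepeso}; the paper asserts the remaining half is analogous. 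Neither a $C_c^\infty(\Omega)$ density reduction nor Lemma \ref{antoninho} is invoked, and neither would produce \eqref{stimaprepeso} without extra boundary or bulk terms, so of your three suggested dispatches the sign argument is the one to retain.
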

\begin{proof}
    We prove inequality \eqref{stimaprepeso} with the negative sign, the other case is similar. Let $\varepsilon \in (0,1)$. Since $u \in C^2(\Omega)$ we are allowed to write:
    \begin{equation*}
        -\operatorname{\bf div}((\varepsilon+{|D{\boldsymbol u}|)}^\frac{p-2}{2}{D \boldsymbol{u}}).
    \end{equation*}
    The idea is to multiply the previous term by a test function $(0,...,\phi^\beta,...,0)$, namely:
    \begin{equation*}
        \phi^\beta = \frac{\psi^2}{{(\varepsilon+|D \boldsymbol{u}|^2)}^{\frac{\sigma}{2}}},
    \end{equation*}
    where $\psi \in C_c^\infty(\mathbb{R}^n)$ and integrate over the domain $\Omega$, namely to consider the following integral:
    \begin{equation*}
        \int_{\Omega}\operatorname{ div}((\varepsilon+{|D{\boldsymbol u}|)}^\frac{p-2}{2}{\nabla{u}^\beta})\frac{\psi^2}{(\varepsilon+{|D{\boldsymbol u}^2|)}^\frac{\sigma}{2}}\,dx.
    \end{equation*}
    Integrating by parts, we arrive to
    \begin{equation*}
    \begin{split}
        -\int_{\Omega}&\operatorname{ div}((\varepsilon+{|D{\boldsymbol u}|)}^\frac{p-2}{2}{\nabla{u}^\beta})\frac{\psi^2}{(\varepsilon+{|D{\boldsymbol u}^2|)}^\frac{\sigma}{2}}\,dx\\
        &=\int_{\Omega} {(\varepsilon + |D\boldsymbol{u}|^2)}^{\frac{p-2}{2}}\langle \nabla u^\beta,\nabla \phi^\beta \rangle\,dx - \int_{\partial \Omega} {(\varepsilon + |D \boldsymbol{u}|^2)}^{\frac{p-2}{2}}\langle \nabla u^\beta,\boldsymbol{\nu}\rangle \phi^\beta \,d\mathcal{H}^{n-1},
    \end{split}
    \end{equation*}
    where $\boldsymbol{\nu}$ is the outward unit vector on $\partial \Omega$.

    To treat the boundary term, let us distinguish two cases:\\
    \begin{enumerate}
        \item[1.] If condition \eqref{Neu_cond} is prescribed, then
        \begin{equation*}
            \int_{\partial \Omega} {(\varepsilon + |D \boldsymbol{u}|^2)}^{\frac{p-2}{2}}\langle \nabla u^\beta,\boldsymbol{\nu}\rangle \phi^\beta \,d\mathcal{H}^{n-1} = 0,
        \end{equation*}
        since $\langle \nabla u^\beta,\boldsymbol{\nu} \rangle=0$.\\
        \item[2.] If, instead, condition \eqref{Dir_cond} is in force, we recall that we can write the outward unit vector on $\partial \Omega$ in terms of $\nabla u^\beta$, namely
        \begin{equation*}
            \boldsymbol{\nu}= \frac{\nabla u^\beta}{|\nabla u^\beta|}, \quad\text{if } \quad\nabla u^\beta \neq 0.
        \end{equation*}
        Otherwise, if $\nabla u^\beta =0$, the boundary term vanishes identically.

        Thus,
        \begin{equation*}
                        \int_{\partial \Omega} {(\varepsilon + |D \boldsymbol{u}|^2)}^{\frac{p-2}{2}}\langle \nabla u^\beta,\boldsymbol{\nu}\rangle \phi^\beta \,d\mathcal{H}^{n-1} =\int_{\partial \Omega} {(\varepsilon + |D \boldsymbol{u}|^2)}^{\frac{p-2}{2}}|\nabla u^\beta| \phi^\beta \,d\mathcal{H}^{n-1} \geq 0.
        \end{equation*}
    \end{enumerate}
    
    \vspace{0.3 cm}
    
    Therefore, in both the Dirichlet and the Neumann cases, holds that:
    \begin{equation}\label{disu1}
        -\int_{\Omega}\operatorname{ div}((\varepsilon+{|D{\boldsymbol u}|)}^\frac{p-2}{2}{\nabla{u}^\beta})\phi^\beta\,dx
        \leq\int_{\Omega} {(\varepsilon + |D\boldsymbol{u}|^2)}^{\frac{p-2}{2}}\langle \nabla u^\beta,\nabla \phi^\beta \rangle\,dx
    \end{equation}
    We focus now on estimating the right hand side of the previous equation
     \begin{equation}\label{disu2}
    \begin{split}
        \int_{\Omega} {(\varepsilon + |D\boldsymbol{u}|^2)}^{\frac{p-2}{2}}\langle \nabla u^\beta,\nabla \phi^\beta \rangle\,dx &= \int_{\Omega} {(\varepsilon + |D \boldsymbol{u}|^2)}^{\frac{p-2}{2}}\langle \nabla u^\beta,\nabla \left ( \frac{\psi^2}{{(\varepsilon + |D \boldsymbol{u}|^2)}^{\frac{\sigma}{2}}}\right ) \rangle\,dx\\
        &=2\int_{\Omega} {(\varepsilon + |D \boldsymbol{u}|^2)}^{\frac{p-2-\sigma}{2}}\langle \nabla u^\beta,\nabla \psi \rangle \psi \,dx\\
        & \quad - \sigma \int_{\Omega} {(\varepsilon + |D \boldsymbol{u}|^2)}^{\frac{p-4-\sigma}{2}}\langle \nabla u^\beta,D^2 \boldsymbol{u} D \boldsymbol{u} \rangle \psi^2 \,dx\\
        &\leq 2\int_{\Omega} {(\varepsilon + |D \boldsymbol{u}|^2)}^{\frac{p-1-\sigma}{2}}|\nabla \psi| |\psi| \,dx\\
        & \qquad \sigma \int_{\Omega} {(\varepsilon + |D \boldsymbol{u}|^2)}^{\frac{p-2-\sigma}{2}}\|D^2 \boldsymbol{u} \| \psi^2 \,dx,
    \end{split}
    \end{equation}
    where in the last inequality we used the estimate \eqref{eq:FraCo}.

    By standard Young inequality we can estimate the first term on the right hand side of the previous inequality as follows
    \begin{equation}\label{disu3}
    \begin{split}
        2\int_{\Omega} &{(\varepsilon + |D \boldsymbol{u}|^2)}^{\frac{p-1-\sigma}{2}}|\nabla \psi| |\psi| \,dx \\
        &\leq \int_{\Omega} {(\varepsilon + |D \boldsymbol{u}|^2)}^{\frac{p-1-\sigma}{2}}|\nabla \psi|^2 \,dx + \int_{\Omega} {(\varepsilon + |D \boldsymbol{u}
        |^2)}^{\frac{p-1-\sigma}{2}} \psi^2 \,dx.
    \end{split}
    \end{equation}
    For the second term, by weighted Young inequality, we deduce
    \begin{equation}\label{disu4}
    \begin{split}
        \sigma \int_{\Omega} &{(\varepsilon + |D \boldsymbol{u}|^2)}^{\frac{p-2-\sigma}{2}}\|D^2 \boldsymbol{u} \| \psi^2 \,dx\\
        &\leq \theta\sigma \int_{\Omega} \frac{\psi^2 }{(\varepsilon + |D \boldsymbol{u}|^2)^{\frac{\sigma}{2}}}\,dx + \frac{\sigma}{4\theta}\int_{\Omega}{(\varepsilon + |D \boldsymbol{u}|^2)}^{p-2-\frac{\sigma}{2}}\|D\boldsymbol{u}\|^2\psi^2\,dx.
    \end{split}
    \end{equation}
    Summing up, using \eqref{disu2}, \eqref{disu3} and \eqref{disu4}, the inequality \eqref{disu1} becomes
    \begin{equation*}
        \begin{split}
            -&\int_{\Omega}\operatorname{ div}((\varepsilon+{|D{\boldsymbol u}|)}^\frac{p-2}{2}{\nabla{u}^\beta})\frac{\psi^2}{(\varepsilon+{|D{\boldsymbol u}|)}^\frac{\sigma}{2}}\,dx\\
            &\leq \int_{\Omega} {(\varepsilon + |D \boldsymbol{u}|^2)}^{\frac{p-1-\sigma}{2}}|\nabla \psi|^2 \,dx + \int_{\Omega} {(\varepsilon + |D \boldsymbol{u}
        |^2)}^{\frac{p-1-\sigma}{2}} \psi^2 \,dx\\
        &\qquad+ \theta\sigma \int_{\Omega} \frac{\psi^2 }{(\varepsilon + |D \boldsymbol{u}|^2)^{\frac{\sigma}{2}}}\,dx + \frac{\sigma}{4\theta}\int_{\Omega}{(\varepsilon + |D \boldsymbol{u}|^2)}^{p-2-\frac{\sigma}{2}}\|D\boldsymbol{u}\|^2\psi^2\,dx.
        \end{split}
    \end{equation*}
  
\end{proof}

Based on the previous lemma we are in position to prove Theorem \ref{peso_stima_Intro}. 

\begin{proof}[Proof of Theorem \ref{peso_stima_Intro}]
The thesis formally follows by an application of Lemma \ref{ineq_peso_u} and by Theorem \ref{teo1INTRO}. Although, as in the proof of Theorem \ref{teo1INTRO}, to apply the previous lemma we call for a regularization argument involving the domain $\Omega$ and the source term of the system $\boldsymbol{f}$. To this aim, we divide the proof into several steps.

\textit{Step $1.$} Here, we assume 
\begin{equation}\label{bordoregolare_P}
   \partial \Omega\in C^2,
\end{equation}
\vspace{0.05cm}
\begin{equation}\label{reg_f_P}
    f \in C^\infty(\Omega).
\end{equation}
Given $\varepsilon \in (0,1)$, we consider a weak solution $\boldsymbol{u_\varepsilon}$ to the approximating system
\begin{equation*}
    -\operatorname{\bf div}((\varepsilon+|D{\boldsymbol u_\varepsilon}|^2)^\frac{p-2}{2}D{\boldsymbol u_\varepsilon})=   {\boldsymbol f}(x) \quad \text{in }\Omega,
\end{equation*}
with either homogeneous Dirichlet or Neumann condition on $\partial \Omega$. As in the proof of Theorem \ref{teo1INTRO}, there exists a sequence $\{\boldsymbol u_k\}\in C^{\infty}(\Omega)\cap C^2(\overline \Omega)$, satisfying either $\boldsymbol{u}_k=0$ or $\frac{\partial \boldsymbol{u}_k}{\partial \boldsymbol{\nu}}=0$ on $\partial \Omega$ for $k\in \mathbb{N}$, and: 
\begin{equation*}
\boldsymbol{u}_k\rightarrow\boldsymbol{u}_\varepsilon \quad \text{in } W^{2,2}(\Omega), \quad D\boldsymbol u_k\rightarrow D\boldsymbol u_\varepsilon \quad \text{a.e. in }\Omega,
\end{equation*}
as $k \rightarrow \infty$. Moreover,
\begin{equation}\label{bound_unif_k_P}
\|D\boldsymbol{u}_k\|_{L^{\infty}(\Omega)}\leq C\|D\boldsymbol{u}_\varepsilon\|_{L^{\infty}(\Omega)},
\end{equation}
for some constants $C$ independent on $k$.

By Lemma \ref{ineq_peso_u}, applied to $\boldsymbol{u}=\boldsymbol{u}_k$, we obtain
  \begin{equation*}
	\begin{split}
		-&\int_{\Omega}\operatorname{ div}((\varepsilon+{|D{\boldsymbol u}_k|)}^\frac{p-2}{2}{\nabla{u}_k^\beta})\frac{\psi^2}{(\varepsilon+{|D{\boldsymbol u}_k|)}^\frac{\sigma}{2}}\,dx\\
		&\leq \int_{\Omega} {(\varepsilon + |D \boldsymbol{u}_k|^2)}^{\frac{p-1-\sigma}{2}}|\nabla \psi|^2 \,dx + \int_{\Omega} {(\varepsilon + |D \boldsymbol{u}_k
			|^2)}^{\frac{p-1-\sigma}{2}} \psi^2 \,dx\\
		&\qquad+ \theta\sigma \int_{\Omega} \frac{\psi^2 }{(\varepsilon + |D \boldsymbol{u}_k|^2)^{\frac{\sigma}{2}}}\,dx + \frac{\sigma}{4\theta}\int_{\Omega}{(\varepsilon + |D \boldsymbol{u}_k|^2)}^{p-2-\frac{\sigma}{2}}\|D\boldsymbol{u}_k\|^2\psi^2\,dx,
	\end{split}
\end{equation*}

   for any $\psi\in C^{\infty}_c(\R^n)$.

 In light of the convergence properties of the sequence $\{\boldsymbol{u}_k\}$, we are in position to use the dominated convergence theorem. 
 
 Thus, for $k \rightarrow \infty$, recalling that $-\operatorname{ div}((\varepsilon+{|D{\boldsymbol u}_\varepsilon|)}^\frac{p-2}{2}{\nabla{u}_\varepsilon^\beta})=f^\beta$, we deduce 

\begin{equation*}
	\begin{split}
		&\int_{\Omega}f^\beta\frac{\psi^2}{(\varepsilon+{|D{\boldsymbol u}_\varepsilon|)}^\frac{\sigma}{2}}\,dx\\
		&\leq \int_{\Omega} {(\varepsilon + |D \boldsymbol{u}_\varepsilon|^2)}^{\frac{p-1-\sigma}{2}}|\nabla \psi|^2 \,dx + \int_{\Omega} {(\varepsilon + |D \boldsymbol{u}_\varepsilon
			|^2)}^{\frac{p-1-\sigma}{2}} \psi^2 \,dx\\
		&\qquad+ \theta\sigma \int_{\Omega} \frac{\psi^2 }{(\varepsilon + |D \boldsymbol{u}_\varepsilon|^2)^{\frac{\sigma}{2}}}\,dx + \frac{\sigma}{4\theta}\int_{\Omega}{(\varepsilon + |D \boldsymbol{u}_\varepsilon|^2)}^{p-2-\frac{\sigma}{2}}\|D\boldsymbol{u}_\varepsilon\|^2\psi^2\,dx.
	\end{split}
\end{equation*}

 As in the proof of Theorem $\ref{stimaC^2}$, there exists $r'' \in (0,r')$, hence depending on $L_\Omega$, $d_\Omega$ and $K_\Omega$ such that $\overline \Omega$ admits a finite covering $\{B_{r_k}\}$, with $r''\leq r_k \leq r'$, and a family of functions $\{\psi_{r_k}\}$
such that $\psi_{r_k} \in C_c^{\infty}(B_{r_k})$ and $\{\psi^2_{r_k}\}$ is a partition of unity of $\overline \Omega$ associated with the covering $\{B_{r_k}\}$. Thus, $\sum_k \psi^2_{r_k}=1$ in $\overline \Omega$. Moreover, the functions $\psi_{r_k}$ can also be chosen so that $|\nabla \psi_{r_k}|\leq C/r_k\leq C / r''$.
 It is crucial to note here that, by our sign assumption on the source term, we can assume that $f^{\beta_k}>\tau_k>0$, in $B_{r_k}$ for some $\beta=1,...,N$.

Writing the previous estimate with $\psi = \psi_k$ for each $k$, using the sign assumptions on $f^\beta$, and adding the resulting inequalities, we deduce
\begin{equation*}
	\begin{split}
		&\int_\Omega \frac{1}{(\varepsilon+{|D{\boldsymbol u}|)}^\frac{\sigma}{2}}\,dx\\
		&\leq C\left(\int_{\Omega} {(\varepsilon + |D \boldsymbol{u}|^2)}^{\frac{p-1-\sigma}{2}} \,dx+ \theta\sigma \int_{\Omega} \frac{1 }{(\varepsilon + |D \boldsymbol{u}|^2)^{\frac{\sigma}{2}}}\,dx\right.\\
		&\qquad +\left. \frac{\sigma}{4\theta}\int_{\Omega}{(\varepsilon + |D \boldsymbol{u}|^2)}^{p-2-\frac{\sigma}{2}}\|D\boldsymbol{u}\|^2\,dx\right),
	\end{split}
\end{equation*}
$C=C(n,L_\Omega,d_\Omega,K_\Omega)$ is a positive constant..

    For $\theta=\theta(\tau_k)$ sufficiently small, we rewrite the previous inequality as follows
    \begin{equation}\label{peso_eps}
    \begin{split}
         \int_\Omega \frac{1}{(\varepsilon + |D \boldsymbol{u}_\varepsilon|^2)^{\frac{\sigma}{2}}}\,dx \leq C &\left (\int_{\Omega} {(\varepsilon + |D \boldsymbol{u}_\varepsilon|^2)}^{\frac{p-1-\sigma}{2}} \,dx \right.\\
         &\quad\left.+\frac{\sigma}{4\theta}\int_{\Omega}{(\varepsilon + |D \boldsymbol{u}_\varepsilon|^2)}^{p-2-\frac{\sigma}{2}}\|D\boldsymbol{u}_\varepsilon\|^2\,dx \right),
    \end{split}
    \end{equation}
      with $C=C(\sigma,\tau,n,L_\Omega,d_\Omega,K_\Omega)$ positive constant.

    We recall that by \cite{Cma2},
\begin{equation}\label{linfinito2}
    \|D\boldsymbol{u}_\varepsilon\|_{L^{\infty}(\Omega)}\leq C(p,|\Omega|,\|\operatorname{tr}\mathcal{B}\|_{L^{n-1,1}(\Omega)},\|\boldsymbol f\|_{L^q(\Omega)}).
\end{equation}

In addition, since $\sigma$ satisfies \eqref{cond_sigma_I}, by \eqref{second_deriv_eps} we infer
\begin{equation}\label{cosette}
    \int_{\Omega}{(\varepsilon + |D \boldsymbol{u}_\varepsilon|^2)}^{p-2-\frac{\sigma}{2}}\|D\boldsymbol{u}_\varepsilon\|^2\,dx \leq C,
\end{equation}
where $C=C(n,N,p,\sigma,L_\Omega,d_\Omega,K_{\Omega},|\Omega|,\|\operatorname{tr}\mathcal{B}\|_{L^{n-1,1}(\Omega)},\|\boldsymbol f\|_{L^q(\Omega)},\|\boldsymbol f\|_{W^{1,1}(\Omega)})$.

Thus, by \eqref{linfinito2} and \eqref{cosette}, we obtain
\begin{equation*}
    \int_\Omega \frac{1}{(\varepsilon + |D \boldsymbol{u}_\varepsilon|^2)^{\frac{\sigma}{2}}}\,dx\leq C,
\end{equation*}
where $C=C(n,N,p,\sigma,\tau,L_\Omega,d_\Omega,\Psi_{\Omega},\|\operatorname{tr}\mathcal{B}\|_{L^{n-1,1}(\Omega)},\|\boldsymbol f\|_{L^q(\Omega)},\|\boldsymbol f\|_{W^{1,1}(\Omega)})$,
notice that the dependence on $\Psi_\Omega$ is due to Lemma \ref{antoninho2}. We remark that the function $\Psi_\Omega$ is valued finite due to Remark \ref{antony2}.

Finally, exploiting Fatou's Lemma, we deduce
\begin{equation}\label{peso_reg2}
\begin{split}
    &\int_\Omega \frac{1}{ |D \boldsymbol{u}|^{\sigma}}\,dx\\ &\leq C(n,N,p,\sigma,\tau,L_\Omega,d_\Omega,\Psi_{\Omega},\|\operatorname{tr}\mathcal{B}\|_{L^{n-1,1}(\Omega)},\|\boldsymbol f\|_{L^q(\Omega)},\|\boldsymbol f\|_{W^{1,1}(\Omega)}),
    \end{split}
\end{equation}
for any $\sigma$ that obeys \eqref{cond_sigma_I}.

\textit{Step $2$.} In order to remove assumption \eqref{bordoregolare_P}, we consider a sequence of open sets $\{\Omega_m\}$ approximating $\Omega$ in the sense of Lemma \ref{approxcap}. For $m\in \N$, let $\boldsymbol{u}_m$ be the weak solution to the problem 
\begin{equation*}
    -\operatorname{\bf div}({|D{\boldsymbol u_m}|}^{p-2}D{\boldsymbol u_m})=   {\boldsymbol f}_m(x) \quad\text{in } \Omega_m,
\end{equation*}
with either $\boldsymbol u_m= 0$ or $\frac{\partial \boldsymbol{u}_m}{\partial \boldsymbol{\nu}}=0$ on $\partial \Omega_m$ and $\boldsymbol{f}_m$ has the same regularity proprieties as in Step 2 of the proof of Theorem \ref{teo1INTRO}.  By inequality \eqref{peso_reg2} of \textit{Step $1$}, applied to $\boldsymbol{u}_m$, 
\begin{equation*}
    \int_\Omega \frac{1}{{|D \boldsymbol{u}_m|}^{\sigma}} \,dx\leq\int_{\Omega_m} \frac{1}{{|D \boldsymbol{u}_m|}^{\sigma}} \,dx \leq C,
\end{equation*}
where $C=C(n,N,p,\sigma,\tau,L_\Omega,d_\Omega,\Psi_{\Omega},\|\operatorname{tr}\mathcal{B}\|_{L^{n-1,1}(\Omega)},\|\boldsymbol f\|_{L^q(\Omega)},\|\boldsymbol f\|_{W^{1,1}(\Omega)})$ is a positive constant. We observe that, due to the properties of $\Omega_m$ (see \cite{Ant}), the chosen covering for $\Omega$ in Step 1 remains the same for 
$\Omega_m$. Moreover, for $m$ sufficiently large, the functions $\boldsymbol{f}_m$ preserve their sign, up to possibly restricting the neighborhood. This follows from the properties of the extension operator and the characteristics of mollifiers. To conclude, we notice that the  dependence of the constant can be obtained as in the proof of Theorem \ref{teo1INTRO}, see \textit{Step $2$}.

Furthermore, by \textit{Step $2$} of the proof of Theorem \ref{teo1INTRO},
\begin{equation*}
    D \boldsymbol{u}_m \rightarrow D \boldsymbol{u} \qquad \text{a.e.} \quad\text{in } \Omega.
\end{equation*}
Thus, Fatou's Lemma ensures:
\begin{equation}\label{peso_reg1}
    \int_\Omega \frac{1}{{|D \boldsymbol{u}|}^{\sigma}}\,dx \leq C.
\end{equation}

\textit{Step $3$.} In the last step we remove the assumption \eqref{reg_f_P}. Let $\boldsymbol{f} \in W^{1,1}(\Omega) \cap L^q(\Omega)$, with $q > n$. By standard density argument one can infer that there exists a sequence $\{\boldsymbol{f}_k\} \subset C^\infty(\overline \Omega)$ (as in Step 2) such that
\begin{equation}\label{conv_f_P}
    \boldsymbol{f}_k \rightarrow \boldsymbol{f} \quad \text{in } W^{1,1}(\Omega) \cap L^q(\Omega).
\end{equation}

To proceed we consider a sequence $\{\boldsymbol{u}_k\}$ of weak solutions to the following system
\begin{equation*}
    -\operatorname{\bf div}({|D{\boldsymbol u_k}|}^{p-2}D{\boldsymbol u_k})=   {\boldsymbol f_k} \quad\text{in } \Omega,
\end{equation*}
with either Dirichlet or Neumann homogeneous boundary conditions.

By inequality \eqref{peso_reg1} of \textit{Step $2$} applied to $\boldsymbol u_k$ we deduce
\begin{equation*}
    \begin{split}
\int_\Omega \frac{1}{{|D \boldsymbol{u}_k|}^{\sigma}}\,dx &\leq C(n,N,p,\sigma,\tau,L_\Omega,d_\Omega,\Psi_{\Omega},\|\operatorname{tr}\mathcal{B}\|_{L^{n-1,1}(\Omega)},\| \boldsymbol f_k\|_{L^q(\Omega)},\|\boldsymbol f_k\|_{W^{1,1}(\Omega)})\\
&\leq \bar C(n,N,p,\sigma,\tau,L_\Omega,d_\Omega,\Psi_{\Omega},\|\operatorname{tr}\mathcal{B}\|_{L^{n-1,1}(\Omega)},\| \boldsymbol f\|_{L^q(\Omega)},\|\boldsymbol f\|_{W^{1,1}(\Omega)}),
\end{split}
\end{equation*}
where the last inequality follows by \eqref{conv_f_P}.

Moreover, from \textit{Step $3$} of the proof of Theorem \ref{teo1INTRO} we have
\begin{equation*}
    D \boldsymbol{u}_k \rightarrow D \boldsymbol{u} \qquad \text{a.e.} \quad\text{in } \Omega.
\end{equation*}
Hence, applying Fatou's Lemma, the thesis follows.

\end{proof}

Let us now analyze the special case of convex domains, namely Theorem \ref{conv_d_peso}.

\begin{proof}[Proof of Theorem \ref{conv_d_peso}]
    As in the proof of Theorem \ref{conv_d}, the result follows via Theorem \ref{peso_stima_Intro}, with some changes. The only difference relies on choosing, in \textit{Step $2$}, a sequence $\Omega_m$ of bounded convex open sets approximating $\Omega$ from outside with respect to the Hausdorff distance. We notice that we can use Theorem \ref{conv_d} to deduce the estimate \eqref{cosette} with constant independent of $K_\Omega$. The remaining part follows in the same way as in the proof of Theorem \ref{peso_stima_Intro}.
\end{proof}

\vspace{0.5 cm}

Using the estimates for $D^2 \boldsymbol{u}$ and ${|D\boldsymbol{u}|}^{-1}$ we are able to obtain more regularity for the solution $\boldsymbol{u}$ in the case $p \geq 3$:
\begin{corollary}
     Let $\boldsymbol u$, $\Omega$ and $\boldsymbol f$ as in Theorem \ref{peso_stima_Intro}. If $p\geq 3$, then
     \begin{equation*}
         \boldsymbol{u}\in W^{2,q}(\Omega), \quad \text{with} \quad 1\leq q < \frac{p-1}{p-2}.
     \end{equation*}
\end{corollary}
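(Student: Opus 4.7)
The plan is to combine the two preceding theorems by an elementary Hölder interpolation. Fix $q$ with $1 \leq q < (p-1)/(p-2)$. I would first choose the regularization parameter $\alpha \in [0,1)$ close enough to $1$ so that
$$q < \frac{2(p-1)}{2p - \alpha - 3},$$
which is possible because the right-hand side tends to $\frac{p-1}{p-2}$ as $\alpha \to 1^-$. This choice will dictate how to split the Hölder product below.

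Next, I would apply Theorem \ref{teo1INTRO} with $\gamma = (p-\alpha)/2$. Combined with the uniform bound \eqref{second_deriv_eps} in the approximating scheme and Fatou's lemma on passing $\varepsilon \to 0$, this gives
$$\int_\Omega |D\boldsymbol{u}|^{p-\alpha-2}\|D^2\boldsymbol{u}\|^2 \, dx \leq C.$$
Simultaneously, since $p \geq 3 > 3/2$, Theorem \ref{peso_stima_Intro} provides
$$\int_\Omega |D\boldsymbol{u}|^{-\sigma}\, dx \leq C \quad \text{for every } \sigma < p-1.$$

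The core of the proof is the decomposition
$$\|D^2\boldsymbol{u}\|^q = \bigl(\|D^2\boldsymbol{u}\|^2 |D\boldsymbol{u}|^{p-\alpha-2}\bigr)^{q/2} \cdot |D\boldsymbol{u}|^{-q(p-\alpha-2)/2},$$
followed by Hölder's inequality with conjugate exponents $2/q$ and $2/(2-q)$ (note $q < (p-1)/(p-2) \leq 2$ for $p \geq 3$, so this is admissible). The first factor is bounded by the second-order estimate; the second factor is of the form $\int_\Omega |D\boldsymbol{u}|^{-\sigma}$ with $\sigma = q(p-\alpha-2)/(2-q)$, and a short algebraic manipulation shows that the condition $\sigma < p-1$ is exactly equivalent to $q < 2(p-1)/(2p-\alpha-3)$, which holds by our choice of $\alpha$. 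Hence the second factor is finite by Theorem \ref{peso_stima_Intro}, and we conclude $\|D^2\boldsymbol{u}\|_{L^q(\Omega)} \leq C$.

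There is no serious obstacle beyond the bookkeeping of exponents: the delicate point is recognizing that one must use $\gamma$ as small as allowed (namely $\gamma = (p-\alpha)/2$) and simultaneously drive $\alpha \to 1^-$, since it is this joint optimization that saturates the Hölder split to the sharp threshold $q < (p-1)/(p-2)$. The boundedness of $D\boldsymbol{u}$ from \eqref{linfinito} ensures no issue with the positive power $|D\boldsymbol{u}|^{(p-\alpha-2)/2}$ inside the decomposition.
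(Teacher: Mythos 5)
Your proposal is correct and follows essentially the same strategy as the paper: the same decomposition $\|D^2\boldsymbol{u}\|^q = \bigl(|D\boldsymbol{u}|^{p-2-\alpha}\|D^2\boldsymbol{u}\|^2\bigr)^{q/2}\,|D\boldsymbol{u}|^{-q(p-2-\alpha)/2}$, the same Hölder split with exponents $2/q$ and $2/(2-q)$, and the same observation that pushing $\alpha\to 1^-$ saturates the threshold at $q<\frac{p-1}{p-2}$. The only (minor) difference in bookkeeping is that the paper performs the Hölder interpolation at the level of the $\varepsilon$-regularized solutions $\boldsymbol{u}_\varepsilon$, obtains a uniform $L^q$-bound for $D^2\boldsymbol{u}_\varepsilon$, and then passes to the limit by weak compactness in $W^{1,q}$ together with the known $L^p$-convergence of $D\boldsymbol{u}_\varepsilon$; you instead pass to the limit on the weighted second-order bound first and then interpolate. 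Your route requires justifying a.e.\ convergence of $\|D^2\boldsymbol{u}_\varepsilon\|$ for the Fatou step, which is a slight extra wrinkle — the paper's order of operations avoids it — but it does not affect the validity of the argument.
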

\begin{proof}
    Given $\varepsilon \in (0,1)$, we consider a weak solution $\boldsymbol{u_\varepsilon}$ to the approximating system
\begin{equation*}
    -\operatorname{\bf div}((\varepsilon+|D{\boldsymbol u_\varepsilon}|^2)^\frac{p-2}{2}D{\boldsymbol u_\varepsilon})=   {\boldsymbol f}(x) \quad \text{in }\Omega,
\end{equation*}
with either homogeneous Dirichlet or Neumann condition on $\partial \Omega$.
    In order to use the estimates proved in the previous theorems, we split ${|D^2 u_\varepsilon|}^q$ as:
    \begin{equation*}
        {|D^2 u_\varepsilon|}^q=|D \boldsymbol{u}_\varepsilon|^{\frac{(p-2-\alpha)q}{2}} {|D^2 u_\varepsilon|}^q \frac{1}{|D \boldsymbol{u}_\varepsilon|^{\frac{(p-2-\alpha)q}{2}}}
    \end{equation*}
    Integrating over $\Omega$ and using the estimate \eqref{second_deriv_eps}, noticing that if $p\geq3$, then $q<2$, we arrive to
    \begin{equation*}
    \begin{split}
        \int_{\Omega} &{|D^2 u_\varepsilon|}^q\,dx = \int_{\Omega}(\varepsilon+ |D \boldsymbol{u}_\varepsilon|^2)^{\frac{(p-2-\alpha)q}{4}} {|D^2 u_\varepsilon|}^q \frac{1}{(\varepsilon+|D \boldsymbol{u}_\varepsilon|^2)^{\frac{(p-2-\alpha)q}{4}}}\,dx\\
        &\leq \left ( \int_{\Omega}(\varepsilon+ |D \boldsymbol{u}_\varepsilon|^2)^{\frac{(
        p-2-\alpha}{2}} {|D^2 u_\varepsilon|}^2 \,dx \right)^{\frac{q}{2}} \left ( \int_\Omega\frac{1}{(\varepsilon+|D \boldsymbol{u}_\varepsilon|^2)^{\frac{(p-2-\alpha)q}{2(2-q)}}}\,dx\right)^{\frac{2-q}{2}}\\
        & \leq C\left ( \int_\Omega\frac{1}{(\varepsilon+|D \boldsymbol{u}_\varepsilon|^2)^{\frac{(p-2-\alpha)q}{2(2-q)}}}\,dx\right)^{\frac{2-q}{2}}
    \end{split}
    \end{equation*}
    We notice that since $p \geq 3$, taking $\alpha \simeq 1$, if $q < \frac{p-1}{p-2}$ the right hand side is bounded. Indeed, it follows by Theorem \ref{peso_stima_Intro}, since $\frac{(p-2-\alpha)q}{2-q} < p-1$.\\
    Therefore,
    \begin{equation*}
        \int_{\Omega} {|D^2 \boldsymbol u_\varepsilon|}^q\,dx \leq C,
    \end{equation*}
where $C$ does not depend on $\varepsilon$. Now it is easy to conclude that $\boldsymbol{u} \in W^{2,q}(\Omega)$. Indeed, by the previous estimate, it follows that there exists $\boldsymbol{V} \in W^{1,q}(\Omega)$ such that
\begin{equation*}
     D\boldsymbol{u}_{\varepsilon} \rightarrow \boldsymbol{V} \text{ in } L^q(\Omega), \quad
           D\boldsymbol{u}_{\varepsilon} \rightharpoonup \boldsymbol{V}\text{ in }W^{1,q}(\Omega)\quad \text{as  } \varepsilon \rightarrow 0. 
\end{equation*}
Moreover, we recall that (see \cite{Cma}):
\begin{equation*}
    D\boldsymbol{u}_{\varepsilon} \rightarrow D\boldsymbol{u} \quad\text{ in } L^p(\Omega),
\end{equation*}
Hence,
\begin{equation*}
    D \boldsymbol{u}=\boldsymbol{V} \in W^{1,q}(\Omega).
\end{equation*}

\end{proof}

\begin{center}
{\bf Acknowledgements}
\end{center} 
The authors are supported by PRIN PNRR P2022YFAJH \emph{Linear and Nonlinear PDEs: New directions and applications.} B. Sciunzi and D. Vuono have been partially supported by \emph{INdAM-GNAMPA Project Regularity and qualitative aspects of nonlinear PDEs via variational and non-variational approaches} E5324001950001.

\

\noindent All the authors are partially supported also by Gruppo Nazionale per l’Analisi Matematica, la Probabilit\`a  e le loro Applicazioni (GNAMPA) of the Istituto Nazionale di Alta Matematica (INdAM).

\begin{center}
	{\sc Data availability statement}\
	All data generated or analyzed during this study are included in this published article.
\end{center}

\

\begin{center}
	{\sc Conflict of interest statement}
	\
	The authors declare that they have no competing interest.
\end{center}

%

\end{document}